\theoremstyle{plain}
\newtheorem{defn}{Definition}[section]
\newtheorem{Thm}[defn]{Theorem}
\newtheorem{Prop}[defn]{Proposition}
\newtheorem{Lem}[defn]{Lemma}
\newtheorem{Cor}[defn]{Corollary}
\theoremstyle{remark}
\newtheorem{Rem}[defn]{Remark}
\newtheorem{Ex}[defn]{Example}
\numberwithin{equation}{section}
\newcommand{\R}{\ensuremath{\mathbb{R}}}
\newcommand{\Z}{\ensuremath{\mathbb{Z}}}
\begin{document}
\title[Geography of bilinearized LCH]{Geography of bilinearized Legendrian contact homology}
\author[F. Bourgeois]{Fr\'ed\'eric Bourgeois} \address{Universit\'e Paris-Saclay, CNRS, Laboratoire de Math\'ematiques d'Orsay, 
91405 Orsay, France} \email{frederic.bourgeois@universite-paris-saclay.fr} 
\urladdr{https://www.imo.universite-paris-saclay.fr/{\raisebox{0.5ex}{\texttildelow}}bourgeois/}
\author[D. Galant]{Damien Galant} \address{D\'epartement de Math\'ematiques, 
University of MONS (UMONS), Place du Parc 20, 7000 Mons, Belgium}
\address{
Universit\'e Polytechnique Hauts-de-France, INSA Hauts-de-France, 
CERAMATHS - Laboratoire de Mat\'eriaux C\'eramiques et de Math\'ematiques, 
F-59313 Valenciennes, France}
\email{damien.galant@umons.ac.be}
\urladdr{http://math.umons.ac.be/staff/Galant.Damien/}

\begin{abstract}
We study the geography of bilinearized Legendrian contact homology for closed, connected Legendrian
submanifolds with vanishing Maslov class in $1$-jet spaces. We show that this invariant detects whether
the two augmentations used to define it are DGA homotopic or not. We describe a collection of graded
vector spaces containing all possible values for bilinearized Legendrian contact homology and then show
that all these vector spaces can be realized.
\end{abstract} 

\maketitle

\section{Introduction}
\label{sec:introduction}

Let $\Lambda$ be a closed Legendrian submanifold of the $1$-jet space $J^1(M)$ of a manifold $M$.
Given a generic complex structure for the canonical contact structure on $J^1(M)$, one can associate to $\Lambda$
its Chekanov-Eliashberg differential graded algebra $(\mathcal{A}(\Lambda), \partial)$, see~\cite{C, EES1, EES3}.
The homology of $(\mathcal{A}(\Lambda), \partial)$, called Legendrian contact homology, is an invariant of the Legendrian 
isotopy class of $\Lambda$,
but it is often hard to compute. It is therefore useful to consider augmentations of $(\mathcal{A}(\Lambda), \partial)$,
because such an augmentation $\varepsilon$ can be used to define a linearized complex $(C(\Lambda), \partial^\varepsilon)$.
The homology is denoted by $LCH^\varepsilon(\Lambda)$ and called linearized Legendrian contact homology of $\Lambda$
with respect to $\varepsilon$. The collection of these homologies for all augmentations of $(\mathcal{A}(\Lambda), \partial)$
is also an invariant of the Legendrian isotopy class of $\Lambda$. The geography (i.e. the determination of all possible values) 
of a similar homological invariant defined using generating families was described by the first author with Sabloff and 
Traynor~\cite{BST}. Using the work of Dimitroglou Rizell~\cite{DR} on the effect of embedded surgeries on Legendrian 
contact homology, this geography can be shown to hold for linearized Legendrian contact homology as well. 
On the other hand, the first author and Chantraine showed~\cite{BC}  that it is possible
to use two augmentations $\varepsilon_1, \varepsilon_2$ of the Chekanov-Eliashberg DGA in order to define a bilinearized
differential $\partial^{\varepsilon_1, \varepsilon_2}$ on $C(\Lambda)$. The corresponding homology is called bilinearized 
Legendrian contact homology and is denoted by $LCH^{\varepsilon_1, \varepsilon_2}(\Lambda)$. The object of this article
is to describe the geography of bilinearized Legendrian contact homology. In other words, our goal is to describe a collection
of Legendrian submanifolds equipped with two augmentations such that their bilinearized Legendrian contact homologies
realize all possible values for this invariant.

When $\varepsilon_1 = \varepsilon_2$, bilinearized Legendrian contact homology coincides with linearized Legendrian contact 
homology. More generally, if the two augmentations are DGA homotopic, $LCH^{\varepsilon_1, \varepsilon_2}(\Lambda)$ is
isomorphic to $LCH^{\varepsilon_1}(\Lambda)$. Our first result describes a crucial difference in the behavior of bilinearized 
Legendrian contact homology depending whether the two augmentations are DGA homotopic or not. More precisely, this
different behavior is detected by a map $\tau_n : LCH^{\varepsilon_1, \varepsilon_2}_n(\Lambda) \to H_n(\Lambda)$
appearing in the duality exact sequence for Legendrian contact homology~\cite{EESdual} and described in Sections~\ref{sec:bLCH} and~\ref{sec:fundclass}.

\begin{Thm} \label{thm:A}
Let $\Lambda$ be a closed, connected Legendrian submanifold of $J^1(M)$ with $\dim M = n$. 
Let $\varepsilon_1, \varepsilon_2$ be two augmentations of the Chekanov-Eliashberg DGA of $\Lambda$
with coefficients in $\Z_2$. 
Then $\varepsilon_1$ and $\varepsilon_2$ are DGA homotopic if and only if the map 
$\tau_n : LCH^{\varepsilon_1, \varepsilon_2}_n(\Lambda) \to H_n(\Lambda)$ is surjective.
\end{Thm}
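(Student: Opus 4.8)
The plan is to convert the geometric statement into a purely cohomological one and then read it off a duality long exact sequence. Since $\Lambda$ is closed and connected, $H_n(\Lambda)\cong\Z_2$, so $\tau_n$ is surjective if and only if it is nonzero, i.e. if and only if the fundamental class $[\Lambda]$ lies in its image. First I would reformulate DGA homotopy algebraically. A homotopy between $\varepsilon_1$ and $\varepsilon_2$ is an $(\varepsilon_1,\varepsilon_2)$-derivation $K$ of degree $+1$ with $\varepsilon_1+\varepsilon_2=K\circ\partial$; such a $K$ is determined by its restriction $k$ to the generators, and the derivation identity rewrites $K(\partial c)$ as $k\bigl(\partial^{\varepsilon_1,\varepsilon_2}c\bigr)$ for every generator $c$. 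A short induction on word length, using $\varepsilon_i\circ\partial=0$, shows that $\varepsilon_1+\varepsilon_2=K\partial$ on all of $\mathcal{A}(\Lambda)$ is equivalent to this identity on generators. Hence $\varepsilon_1$ and $\varepsilon_2$ are DGA homotopic if and only if the functional $\delta:=(\varepsilon_1+\varepsilon_2)|_{C(\Lambda)}$ is a coboundary for the dual bilinearized differential. A direct computation, again using $\varepsilon_i\circ\partial=0$, shows that $\delta$ is always a cocycle, so this says exactly that $[\delta]=0$ in the degree-zero bilinearized cohomology $LCH^{0}_{\varepsilon_1,\varepsilon_2}(\Lambda)$.

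Next I would invoke the bilinearized duality long exact sequence --- the analogue for $LCH^{\varepsilon_1,\varepsilon_2}$ of the Sabloff and Ekholm--Etnyre--Sabloff duality sequence --- relating $LCH^{\varepsilon_1,\varepsilon_2}_\ast(\Lambda)$, the dual theory $LCH^{\ast}_{\varepsilon_1,\varepsilon_2}(\Lambda)$, and the singular homology of $\Lambda$. Its top-degree segment reads
\[
LCH^{\varepsilon_1,\varepsilon_2}_n(\Lambda)\xrightarrow{\ \tau_n\ }H_n(\Lambda)\xrightarrow{\ \rho\ }LCH^{0}_{\varepsilon_1,\varepsilon_2}(\Lambda),
\]
and I would identify the first arrow with the $\tau_n$ of the statement. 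By exactness $\operatorname{im}\tau_n=\ker\rho$, so $\tau_n$ is surjective if and only if $\rho=0$; and since $H_n(\Lambda)=\Z_2\langle[\Lambda]\rangle$, this holds if and only if $\rho([\Lambda])=0$.

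The crux is then to compute $\rho([\Lambda])$, and the claim I would establish is $\rho([\Lambda])=[\delta]=[\varepsilon_1+\varepsilon_2]$. Representing $[\Lambda]$ by the maximum of a Morse function on $\Lambda$ and unwinding the chain-level construction of the connecting map, the duality count is a count of rigid disks (equivalently gradient flow trees) with boundary on $\Lambda$ carrying augmentation insertions, with $\varepsilon_1$ inserted along one boundary arc and $\varepsilon_2$ along the other. The two augmentations enter the two halves of the boundary asymmetrically, and the resulting functional on degree-zero generators is precisely $c\mapsto\varepsilon_1(c)+\varepsilon_2(c)$. Granting this, chaining the three equivalences gives: $\tau_n$ surjective iff $\rho([\Lambda])=0$ iff $[\delta]=0$ iff $\varepsilon_1\sim\varepsilon_2$. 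As a consistency check, for $\varepsilon_1=\varepsilon_2$ one has $\delta=0$, recovering the known surjectivity in the linearized case; more generally the ``homotopic implies surjective'' direction also follows from the stated invariance $LCH^{\varepsilon_1,\varepsilon_2}\cong LCH^{\varepsilon_1}$ together with that linearized fact.

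I expect the main obstacle to be this last step: pinning down the chain-level duality map and proving $\rho([\Lambda])=[\varepsilon_1+\varepsilon_2]$ exactly. This requires careful bookkeeping of the moduli of disks contributing to the duality pairing near the top cell of $\Lambda$ and matching the asymmetric roles of $\varepsilon_1$ and $\varepsilon_2$, together with care about the ordering convention (the swap $\varepsilon_1\leftrightarrow\varepsilon_2$) and the grading shift in the definition of the dual bilinearized complex. By contrast, the purely algebraic reformulation of the first paragraph is robust and should present no difficulty.
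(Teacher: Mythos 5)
Your proposal is correct in outline and is essentially the mirror image of the paper's argument. The paper proves exactly your first paragraph as a lemma (DGA homotopy of $\varepsilon_1,\varepsilon_2$ is equivalent to $\varepsilon_1-\varepsilon_2$ factoring through $\partial^{\varepsilon_1,\varepsilon_2}$ via a degree $+1$ map on $C(\Lambda)$), then computes the map $\tau_0:LCH^{\varepsilon_1,\varepsilon_2}_0(\Lambda)\to H_0(\Lambda)$ at chain level and shows it is $\varepsilon_1-\varepsilon_2$: representing the point class by the minimum $m$ of a Morse function whose stable manifold is open and dense, the only rigid disks with a marked point constrained to that stable manifold are trivial strips over Reeb chords, and the marked point sits at one of the two chord endpoints, contributing $\varepsilon_1(c)$ or $\varepsilon_2(c)$. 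Finally the paper transports this statement from degree $0$ to degree $n$ using the adjointness of $\tau_n$ and $\sigma_n$ together with exactness at $H_0(\Lambda)$. You instead work at the $H_n(\Lambda)$ end of the sequence, using exactness there and computing the connecting map $\rho=\sigma_0$ on $[\Lambda]$ directly; the two routes are equivalent precisely via the adjointness the paper invokes. What your version buys is that you avoid stating the adjointness of $\tau$ and $\sigma$; what it costs is that the chain-level identification you must establish, $\rho([\Lambda])=[\varepsilon_1+\varepsilon_2]$ --- the step you correctly flag as the crux and leave as a sketch --- is the geometrically less convenient side of the same disk count, since the dual constraint for the fundamental class is harder to localize than the ``open and dense stable manifold of $m$'' argument available for $\tau_0$. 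Two small points to make explicit if you write this up: the bilinearized duality sequence of the paper lands in $LCH^{0}_{\varepsilon_2,\varepsilon_1}(\Lambda)$ with the augmentations \emph{swapped}, not $LCH^{0}_{\varepsilon_1,\varepsilon_2}(\Lambda)$ (harmless here because $\varepsilon_1+\varepsilon_2$ is symmetric and DGA homotopy is an equivalence relation, but it should be said); and your observation that the ``homotopic implies surjective'' direction follows from invariance of bilinearized LCH under DGA homotopy plus the linearized nonvanishing of $\tau_n$ is a valid shortcut the paper does not use.
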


In other words, this means that the fundamental class of $\Lambda$ induces a class in linearized 
Legendrian contact homology, while the class of the point in $\Lambda$ induces a class in
bilinearized Legendrian contact homology with respect to non DGA homotopic augmentations.

\begin{Cor} \label{cor:A}
Bilinearized Legendrian contact homology is a complete invariant for DGA homotopy classes of
augmentations of the Chekanov-Eliashberg DGA.
\end{Cor}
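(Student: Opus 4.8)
The plan is to read off this statement directly from Theorem \ref{thm:A}. First I would fix the meaning of the assertion: bilinearized Legendrian contact homology is a \emph{complete invariant for DGA homotopy classes} in the sense that, for any pair of augmentations $\varepsilon_1, \varepsilon_2$ of $(\mathcal{A}(\Lambda), \partial)$, the invariant $LCH^{\varepsilon_1, \varepsilon_2}(\Lambda)$ together with its canonical map $\tau_n \colon LCH^{\varepsilon_1, \varepsilon_2}_n(\Lambda) \to H_n(\Lambda)$ determines whether $\varepsilon_1$ and $\varepsilon_2$ belong to the same DGA homotopy class.

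I would then assemble two ingredients. For well-definedness on classes, I would use the invariance recalled in the introduction: when $\varepsilon_1$ and $\varepsilon_2$ are DGA homotopic, $LCH^{\varepsilon_1, \varepsilon_2}(\Lambda)$ is isomorphic to the linearized homology $LCH^{\varepsilon_1}(\Lambda)$, so that the bilinearized homology depends only on the DGA homotopy classes of its two arguments. For detection, Theorem \ref{thm:A} provides exactly what is needed: since $\Lambda$ is closed and connected, $H_n(\Lambda; \Z_2) \cong \Z_2$, so $\tau_n$ is either surjective or zero, and $\varepsilon_1$ is DGA homotopic to $\varepsilon_2$ if and only if $\tau_n$ is surjective. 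Thus the single bit recorded by $\tau_n$ recovers the DGA homotopy relation on augmentations, and bilinearized Legendrian contact homology is a complete invariant for DGA homotopy classes.

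The main obstacle here is interpretive rather than technical: one must be careful about what ``complete invariant'' should mean for a quantity that depends on a pair of augmentations rather than a single one, and one must verify that the comparison map $\tau_n$ is genuinely part of the bilinearized homology package, its target $H_n(\Lambda)$ being independent of the augmentations, rather than auxiliary data. With Theorem \ref{thm:A} assumed, no further analytic input is required, and the corollary is essentially a reformulation of that theorem.
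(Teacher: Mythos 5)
Your argument is correct under the interpretation you adopt, but it takes a genuinely different route from the paper, and the difference matters. You treat the map $\tau_n : LCH^{\varepsilon_1,\varepsilon_2}_n(\Lambda) \to H_n(\Lambda)$ as part of the ``bilinearized LCH package'' and then read the corollary off from Theorem~\ref{thm:A}. The paper instead deduces the corollary from Proposition~\ref{prop:aughtpy}, which converts Theorem~\ref{thm:A} into a purely numerical criterion:
$\dim_{\Z_2} LCH^{\varepsilon_2,\varepsilon_1}_n(\Lambda) - \dim_{\Z_2} LCH^{\varepsilon_1,\varepsilon_2}_{-1}(\Lambda)$ equals $1$ or $0$ according to whether $\varepsilon_1 \sim \varepsilon_2$ or not. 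The point of that extra step is that the graded vector spaces alone --- with no auxiliary comparison map to singular homology --- already detect DGA homotopy, which is the stronger and intended reading of ``complete invariant''; it is also what makes the criterion computable in practice (Example~\ref{ex:m8_21} determines the homotopy classes by computing dimensions only). To get from your version to the paper's, you would need the bridge you omit: in the duality exact sequence~\eqref{eq:bilindualityseq} one has $H_0(\Lambda) \cong \Z_2 \stackrel{\sigma_n}{\to} LCH^n_{\varepsilon_2,\varepsilon_1}(\Lambda) \to LCH^{\varepsilon_1,\varepsilon_2}_{-1}(\Lambda) \to 0$, and the adjointness of $\tau$ and $\sigma$ gives $\operatorname{rank}\sigma_n = \operatorname{rank}\tau_n$, so surjectivity of $\tau_n$ is encoded in a difference of Betti numbers. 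Without that step (or something equivalent), your argument only shows that bilinearized LCH \emph{together with the map $\tau_n$} is a complete invariant, which is a weaker statement than the one asserted. Your well-definedness remark (invariance of $LCH^{\varepsilon_1,\varepsilon_2}$ under DGA homotopy of either argument) is a sensible addition that the paper leaves implicit, citing~\cite[Section~5.3]{NRSSZ} elsewhere.
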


The strength of this result will be illustrated in Section~\ref{sec:fundclass} by revisiting an important example 
of Legendrian knot featuring only a partial study of its augmentations~\cite{MS}. In this paper we complete the study 
of this Legendrian knot with a full description of its DGA homotopy classes of augmentations.

Our second result describes the geography of the Laurent polynomials that can be obtained
as a Poincar\'e polynomial for bilinearized Legendrian contact homology. 
We will introduce in Definition~\ref{def:bLCHadm} the explicit notion of bLCH-admissible Laurent polynomial,
and prove that only these polynomials can be obtained as the Poincar\'e polynomial of bilinearized 
Legendrian contact homology.

\begin{Thm}  \label{thm:B}
For any bLCH-admissible Laurent polynomial $P$, there exists a closed, connected Legendrian submanifold $\Lambda$ 
of $J^1(M)$ and there exist two non DGA homotopic augmentations $\varepsilon_1, \varepsilon_2$ of the 
Chekanov-Eliashberg DGA of $\Lambda$, with the property that the Poincar\'e polynomial of 
$LCH^{\varepsilon_1, \varepsilon_2}(\Lambda)$ with coefficients in $\mathbb{Z}_2$ is equal to $P$.
\end{Thm}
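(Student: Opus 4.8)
The plan is to prove Theorem B by an explicit realization argument: given a bLCH-admissible Laurent polynomial $P$, I would construct a Legendrian submanifold $\Lambda$ together with two non DGA homotopic augmentations whose bilinearized Poincar\'e polynomial equals $P$. The admissibility condition splits $P$ as $P = P_{\mathrm{lin}} + P_{\mathrm{def}}$, where $P_{\mathrm{lin}}$ is a polynomial of degree at most $n-1$ with constant term $1$ (the part one expects from an honest linearized theory, carrying the fundamental-class obstruction), and $P_{\mathrm{def}}$ is a Laurent polynomial encoding the genuinely bilinearized contribution, subject only to the parity/vanishing constraint on its value at $-1$. The value at $-1$ is the graded Euler characteristic, so the constraint on $P_{\mathrm{def}}(-1)$ is exactly the duality/Euler-characteristic constraint forced by the long exact sequences relating bilinearized homology to $H_*(\Lambda)$ and to the linearized theory; by Theorem~\ref{thm:A}, the non-DGA-homotopic hypothesis forces $\tau_n$ to fail to be surjective, which is what allows $P_{\mathrm{def}}$ to absorb a degree-$n$ deficiency without a matching fundamental class.

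First I would assemble a library of building blocks: small model Legendrians, each contributing a controlled, prescribed summand to the bilinearized Poincar\'e polynomial. The natural candidates are the Legendrian unknots and their stabilizations in $J^1(\R^n)$ (or standard spinning/front-spinning constructions to reach dimension $n$), for which the Chekanov-Eliashberg DGA and its augmentations are explicitly known, so the bilinearized differential can be computed by hand. Each block should be designed so that a chosen pair of augmentations contributes a single monomial $t^k$ (or a simple pair of monomials realizing a copy of the deformation part) to $LCH^{\varepsilon_1,\varepsilon_2}$. The base topological piece realizing $P_{\mathrm{lin}}$ can be taken to realize the required polynomial part with constant term $1$, mirroring the geography result for linearized homology established via Dimitroglou Rizell's surgery techniques~\cite{DR} and the generating-family geography of~\cite{BST}.

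Next I would combine these blocks. The mechanism is a connected-sum / Legendrian-surgery operation on Legendrian submanifolds that induces a controlled operation on the Chekanov-Eliashberg DGAs and on augmentations, so that the bilinearized Poincar\'e polynomial of the result is the sum of the bilinearized Poincar\'e polynomials of the pieces (up to an explicit shift that I would normalize to preserve the constant term $1$ contributed by the base piece). Keeping $\Lambda$ closed and connected requires that each surgery be genuinely connecting; keeping the Maslov class zero requires checking the grading contributions cancel as designed. The crucial point is that the resulting pair $(\varepsilon_1,\varepsilon_2)$ must remain non DGA homotopic: I would guarantee this by arranging at least one block whose contribution forces $\tau_n$ to be non-surjective, and then invoke Theorem~\ref{thm:A} in the reverse direction to conclude non-homotopy from the Poincar\'e polynomial having the prescribed degree-$n$ behavior.

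The hard part will be the bookkeeping of gradings together with the realizability of \emph{negative-degree} monomials in $P_{\mathrm{def}}$. A Laurent (rather than ordinary) polynomial is required precisely because the bilinearized theory can have homology in negative degrees, and producing prescribed negative-degree classes demands stabilized building blocks whose generators sit in the right degrees; controlling these while preserving connectedness and the vanishing Maslov class is delicate. I also expect the parity constraint to be the subtle structural input: I would verify that the duality long exact sequence of~\cite{BC} forces exactly the stated congruence on $P_{\mathrm{def}}(-1)$, and conversely that any value satisfying it is attained, so that the admissibility condition is both necessary and sufficient. The remaining steps---explicit DGA computations for each block and the additivity of the Poincar\'e polynomial under the connecting operation---are routine once the blocks are fixed, so I would relegate those to lemmas and focus the main argument on the grading and parity accounting.
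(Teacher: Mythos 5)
Your high-level decomposition $P = P_{\mathrm{lin}} + P_{\mathrm{def}}$ matches the paper's $P=q+p$, and your plan for the polynomial part $q$ (import the linearized geography of~\cite{BST} via~\cite{DR}) is essentially what the paper does. But the core of the theorem is the realization of $P_{\mathrm{def}}$, and there your proposal has two genuine gaps. First, your proposed building blocks --- ``Legendrian unknots and their stabilizations'' --- cannot supply prescribed negative-degree (or otherwise arbitrary-degree) classes: stabilization produces a generator $a$ with $\partial a = 1$, so the Chekanov--Eliashberg DGA of a stabilized Legendrian admits no augmentations at all, and the bilinearized theory is not even defined. The paper's actual mechanism is quite different and is the main new idea: one takes the $2N$-copy $\Lambda^{(2N)}$ of the standard unknot with two \emph{alternating} augmentations $\varepsilon_L,\varepsilon_R$ (augmenting the short chords $m_{i,i+1}$ for $i$ even, resp.\ odd), collapses its homology to the single class $1$ by $2$-copies of connected sums, and then links an extra unknot $\Lambda_{0,i}$ with only the bottom $k_i$ components and with a Maslov shift $m_i$; the surviving mixed chords then contribute a pair $t^{-m_i}+t^{a_i}$ in degrees that do \emph{not} sum to $n-1$. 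Nothing in your block library produces such pairs, and this is precisely the point the introduction flags as requiring ``a completely new construction.''

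Second, your assembly step --- connected sums of independently built blocks, with additivity of the bilinearized Poincar\'e polynomial ``up to an explicit shift'' --- fails. By Proposition~\ref{prop:connsum}, a connected sum between two components on which the restricted augmentations are non DGA homotopic (so that $\tau_{n,1}=\tau_{n,2}=0$, hence $\tau_{n,1}-\tau_{n,2}=0$) \emph{adds} a spurious $t^{n-1}$ to the Poincar\'e polynomial; summing $r$ blocks this way pollutes the answer by $r$ unwanted terms, which is exactly why the paper states that the connected-sum strategy of~\cite{BST} ``cannot be used'' here. The paper instead arranges each auxiliary component $\Lambda_{0,i}$ to carry \emph{equal} (both zero) restricted augmentations, so its fundamental class $[c_{0,0}]$ survives in degree $n$ and the connecting sum falls into the other case of Proposition~\ref{prop:connsum}, removing a deliberately planted $t^n$ rather than adding $t^{n-1}$. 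Without this arrangement your construction does not yield $P$ but $P$ plus extra terms in degrees $n-1$ and $n$, so the proposal as written does not prove the theorem.
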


We also will establish a similar result, namely Theorem~\ref{thm:geogsphere}, in the specific case of Legendrian spheres.

The collection of Poincar\'e polynomials that is realized by bilinearized Legendrian contact homology is considerably
wider than the corresponding collection for the geography of linearized Legendrian contact homology~\cite[Theorem~1.1]{BST}.
For this reason, the examples of Legendrian submanifolds that are constructed in this paper in order to realize the geography
of bilinearized Legendrian contact homology differ substantially from those considered in~\cite{BST} and exhibit
new interesting phenomena. In particular, while connected sums of Legendrian submanifolds played an important role in~\cite{BST},
such constructions cannot be used in this paper because these tend to produce pairs of unwanted generators in bilinearized 
Legendrian contact homology. Moreover, we introduce a completely new construction in order to create pairs of generators in 
arbitrary degrees, instead of degrees summing to $n-1$ as in linearized Legendrian contact homology. We also introduce 
another completely new construction in order to obtain bilinearized Legendrian contact homologies of different ranks, 
depending on the ordering of the two non DGA homotopic augmentations. Note that the examples 
constructed in this paper are convenient to work with, as they only have cusp singularities.

This paper is organized as follows. In Section~\ref{sec:bLCH} we review the definition of bilinearized Legendrian 
contact homology and state its main properties. In Section~\ref{sec:fundclass} we study fundamental classes
in bilinearized Legendrian contact homology, prove Theorem~\ref{thm:A} and Corollary~ \ref{cor:A} and study the
effect of connected sums on bilinearized Legendrian contact homology.
In Section~\ref{sec:geog}, we study the geography of bilinearized Legendrian contact homology and
prove Theorems~\ref{thm:B} and its counterpart Theorem~\ref{thm:geogsphere} for Legendrian spheres.

{\bf Acknowledgements. } 
We are indebted to Josh Sabloff for providing us a computer code that computes linearized Legendrian contact homology of 
Legendrian knots in $\R^3$, using techniques presented in~\cite{HR1} and \cite{HR2}.  Although the exposition of this paper
is independent from these sources, the generalization of this computer code by DG to the calculation of bilinearized Legendrian contact homology played an essential role at the beginning of this work, before its generalization to higher dimensions. 
We thank Georgios Dimitroglou Rizell for a productive discussion of DGA homotopies
of augmentations.  
An important refinement in the constructions from Section~\ref{sec:geog} emerged after an 
interesting conversation with Sylvain Courte.
Special thanks go to Filip Strako\v{s} for spotting a mistake in the proof of Proposition~\ref{prop:bLCHadm} impacting
other parts of an earlier version of the paper.
We also thank Cyril Falcon for his remarks on the original manuscript.
FB was partially supported by the Institut Universitaire de France and by the ANR projects 
Quantact (16-CE40-0017) and Microlocal (15-CE40-0007).

\section{Bilinearized Legendrian contact homology}
\label{sec:bLCH}

The $1$-jet space $J^1(M) = T^*M \times \mathbb{R}$ of a smooth, $n$-dimensional manifold $M$ is equipped
with a canonical contact structure $\xi = \ker(dz - \lambda)$, where $\lambda$ is the Liouville $1$-form on $T^*M$
and $z$ is the coordinate along $\mathbb{R}$. Let $\Lambda$ be a closed Legendrian submanifold of this contact
manifold, i.e. a closed, embedded submanifold of dimension $n$ such that $T_p\Lambda \subset \xi_p$ for any
$p \in \Lambda$. 

We first describe the definition of a differential graded algebra associated to $\Lambda$, following its construction
by Ekholm, Etnyre and Sullivan~\cite{EES1}. The Reeb vector field associated to the contact form 
$\alpha = dz -\lambda$ for $\xi$ is simply
$R_\alpha = \frac{\partial}{\partial z}$. A Reeb chord of $\Lambda$ is a finite, nontrivial piece of integral curve 
for $R_\alpha$ with endpoints on $\Lambda$. After performing a Legendrian isotopy, we can assume that all Reeb
chords of $\Lambda$ are nondegenerate, i.e. the canonical projections to the tangent space of $T^*M$ of the 
tangent spaces to $\Lambda$ at the endpoints of each chord are intersecting transversally. Let us assume that
the Maslov class $\mu(\Lambda)$ of $\Lambda$ vanishes, see~\cite[section 2.2]{EES1}.

We denote by $\mathcal{A}(\Lambda)$ the unital, noncommutative algebra freely generated over $\mathbb{Z}_2$ 
by the Reeb chords of $\Lambda$. Each Reeb chord $c$ is graded by its Conley-Zehnder $\nu(c) \in \mathbb{Z}$; 
when $\Lambda$ is connected, this does not depend on any additional choice since $\mu(\Lambda)=0$. The grading of 
$c$ is defined as $|c| = \nu(c) - 1$. Hence, in this case the algebra $\mathcal{A}(\Lambda)$ is naturally graded.

Let $J$ be a complex structure on $\xi$, which is compatible with its conformal symplectic structure. This complex structure
naturally extends to an almost complex structure, still denoted by $J$, on the symplectization $(\mathbb{R} \times J^1(M), 
d(e^t \alpha))$ by $J\frac{\partial}{\partial t} = R_\alpha$. We consider the moduli space $\widetilde{\mathcal{M}}(a;b_1, \ldots, b_k)$
of $J$-holomorphic disks in $\mathbb{R}\times J^1(M)$ with boundary on $\mathbb{R} \times \Lambda$ and with $k+1$ 
punctures on the boundary that are asymptotic at the first puncture to the Reeb chord $a$ at $t = +\infty$ and at the other 
punctures to the Reeb chords $b_1, \ldots, b_k$ at $t=-\infty$. For a generic choice of $J$, this moduli space is a smooth 
manifold of dimension $|a|-\sum_{i=1}^k|b_i|$, see~\cite[Proposition 2.2]{EES1}. This moduli space carries a natural 
$\mathbb{R}$-action corresponding to the translation of $J$-holomorphic disks along the $t$ coordinate. When $\{ b_1, \ldots, b_k\} 
\neq \{ a \}$, let us denote by $\mathcal{M}(a;b_1, \ldots, b_k)$ the quotient of this moduli space by this free action.

We define a differential $\partial$ on $\mathcal{A}(\Lambda)$ by
$$
\partial a = \sum_{\stackrel{b_1, \ldots, b_k}{\dim \mathcal{M}(a;b_1, \ldots, b_k)=0}} \#_2 \mathcal{M}(a;b_1,\ldots,b_k) \ b_1 \ldots b_k 
$$
where $\#_2 \mathcal{M}(a;b_1,\ldots,b_k)$ is the number of elements in the corresponding moduli space, modulo $2$.
This differential has degree $-1$ and satisfies $\partial \circ \partial = 0$.

The resulting differential graded algebra $(\mathcal{A}(\Lambda), \partial)$ is called the Chekanov-Eliashberg DGA and its homology
is called Legendrian contact homology and denoted by $LCH(\Lambda)$. This graded algebra over $\Z_2$ depends only on the
Legendrian isotopy class of $\Lambda$. 

Let us now turn to the definition of a linearized version of Legendrian contact homology.
An augmentation of $(\mathcal{A}(\Lambda), \partial)$ is a unital DGA map $\varepsilon : (\mathcal{A}(\Lambda), \partial) \to (\Z_2, 0)$.
In other words, it is a choice of $\varepsilon(c) \in \Z_2$ for all Reeb chords $c$ of $\Lambda$, it satisfies $\varepsilon(1)=1$, it 
extends to $\mathcal{A}(\Lambda)$ multiplicatively and additively, and it satisfies $\varepsilon \circ \partial = 0$. 

Such an augmentation can be used to define a linearization of $(\mathcal{A}(\Lambda), \partial)$. Let $C(\Lambda)$ be the vector
space over $\Z_2$ freely generated by all Reeb chords of $\Lambda$. We also define the linearized differential $\partial^\varepsilon$ 
on $C(\Lambda)$ by
$$
\partial^\varepsilon a = \!\!\!\!\!\! \sum_{\stackrel{b_1, \ldots, b_k}{\dim \mathcal{M}(a;b_1, \ldots, b_k)=0}} \!\!\!\!\!\! \#_2 
\mathcal{M}(a;b_1,\ldots,b_k) 
\sum_{i=1}^k  \varepsilon(b_1) \ldots \varepsilon(b_{i-1}) b_i \varepsilon(b_{i+1}) \ldots \varepsilon(b_k). 
$$
This differential has degree $-1$ and satisfies $\partial^\varepsilon \circ \partial^\varepsilon = 0$. 
The homology of the resulting linearized
complex  $(C(\Lambda),\partial^\varepsilon)$ is called linearized Legendrian contact homology (with respect to $\varepsilon$) and 
denoted by $LCH^\varepsilon(\Lambda)$. The collection of these graded modules over $\Z_2$ for all augmentations of $\Lambda$
depends only on the Legendrian isotopy class of $\Lambda$.

Linearized Legendrian contact homology fits into a duality long exact sequence~\cite{EESdual} together with its cohomological version
$LCH_{\varepsilon}(\Lambda)$ and with the singular homology $H(\Lambda)$ of the underlying $n$-dimensional manifold $\Lambda$:
$$
\ldots \to LCH^{n-k-1}_{\varepsilon}(\Lambda) \to LCH^\varepsilon_k(\Lambda) \stackrel{\tau_k}{\longrightarrow} H_k(\Lambda) 
\to LCH^{n-k}_{\varepsilon}(\Lambda) \to \ldots
$$
Moreover, the map $\tau_n$ in the above exact sequence does not vanish. These properties induce constraints on the graded modules
over $\Z_2$ that can be realized as the linearized Legendrian contact homology of some Legendrian submanifold, with respect to some 
augmentation. These constraints can be formulated in terms of the Poincar\'e polynomial of $LCH^{\varepsilon}(\Lambda)$, which 
is the Laurent polynomial defined by
$$
P_{\Lambda, \varepsilon}(t) = \sum_{k \in \Z} \dim_{\Z_2} LCH^{\varepsilon}_k(\Lambda) \ t^k.
$$
When $\Lambda$ is connected, the duality exact sequence and the non-vanishing of $\tau_n$ imply that the above Poincar\'e
polynomial has the form
\begin{equation} \label{eq:geoglin}
P_{\Lambda, \varepsilon}(t) = q(t) + p(t) + t^{n-1} p(t^{-1}),
\end{equation}
where $q$ is a monic polynomial of degree $n$ with integral coefficients (corresponding to the image of the maps $\tau_k$) 
and $p$ is a Laurent polynomial with integral coefficients (corresponding to the kernel of the maps $\tau_k$).
We shall say that a Laurent polynomial of this form is LCH-admissible.

The first author together with Sabloff and Traynor~\cite{BST} studied generating family homology $GH(f)$, an invariant for isotopy
classes of Legendrian submanifolds $\Lambda \subset (J^1(M),\xi)$ admitting a generating family $f$. This invariant is also a graded
module over $\Z_2$ and satisfies the same duality exact sequence as above. In this study, the effect of Legendrian ambient surgeries 
on this invariant was determined and these operations were used to produce many interesting examples of Legendrian submanifolds
admitting generating families. More precisely, for any LCH-admissible Laurent polynomial $P$, a connected Legendrian submanifold 
$\Lambda_P$ admitting a generating family $f_P$ realizing $P$ as the Poincar\'e polynomial of $GH(f_P)$ was constructed 
using these operations. On the other hand, Dimitroglou Rizell~\cite{DR} showed in particular that Legendrian ambient surgeries
have the same effect as above on linearized Legendrian contact homology (for more details in the case of the connected sum, see 
the proof of Proposition~\ref{prop:connsum}). This result can be used step by step in the constructions of~\cite{BST} to show that, for 
any LCH-admissible Laurent polynomial $P$, there exists an augmentation $\varepsilon_P$ for $\Lambda_P$ such that 
$LCH^{\varepsilon_P}(\Lambda_P) \cong GH(f_P)$. 
Therefore, the geography question for linearized Legendrian contact homology is completely determined by the above LCH-admissible 
Laurent polynomials.

Finally, we turn to a generalization of linearized LCH introduced by the first author together with 
Chantraine~\cite{BC}. Using two augmentations $\varepsilon_1$ and $\varepsilon_2$ of $(\mathcal{A}(\Lambda), \partial)$, 
we can define another differential $\partial^{\varepsilon_1,\varepsilon_2}$ on $C(\Lambda)$, called bilinearized differential:
$$
\partial^{\varepsilon_1,\varepsilon_2} a = \!\!\!\!\!\!\!\!\!\!\!\!\! \sum_{\stackrel{b_1, \ldots, b_k}{\dim \mathcal{M}(a;b_1, \ldots, b_k)=0}} 
\!\!\!\!\!\!\!\!\!\!\!\!\!\!\! \#_2 \mathcal{M}(a;b_1,\ldots,b_k) 
\sum_{i=1}^k  \varepsilon_1(b_1) \ldots \varepsilon_1(b_{i-1}) b_i \varepsilon_2(b_{i+1}) \ldots \varepsilon_2(b_k). 
$$
As above, this differential has degree $-1$ and satisfies $\partial^{\varepsilon_1,\varepsilon_2} \circ 
\partial^{\varepsilon_1,\varepsilon_2} = 0$. The homology of the resulting bilinearized
complex  $(C(\Lambda),\partial^{\varepsilon_1,\varepsilon_2})$ is called bilinearized Legendrian contact homology 
(with respect to $\varepsilon_1$ and $\varepsilon_2$) and denoted by $LCH^{\varepsilon_1,\varepsilon_2}(\Lambda)$. 
The collection of these graded modules over $\Z_2$ for all pairs of augmentations of $\Lambda$ depends only on the 
Legendrian isotopy class of $\Lambda$.

Bilinearized Legendrian contact homology also satisfies a duality exact sequence~\cite{BC}, but one has to take care of
the ordering of the augmentations:
\begin{equation} \label{eq:bilindualityseq}
\ldots \to LCH^{n-k-1}_{\varepsilon_2, \varepsilon_1}(\Lambda) \to LCH^{\varepsilon_1,\varepsilon_2}_k(\Lambda) 
\stackrel{\tau_k}{\longrightarrow} H_k(\Lambda) \stackrel{\sigma_{n-k}}{\longrightarrow} LCH^{n-k}_{\varepsilon_2, \varepsilon_1}
(\Lambda) \to \ldots
\end{equation}
Moreover, unlike in the linearized case, there exist~\cite[section~5]{BC} connected Legendrian submanifolds $\Lambda$ 
with augmentations $\varepsilon_1$ and $\varepsilon_2$ such that the map $\tau_n$ vanishes. Our goal in this article is 
to understand when the map $\tau_n$ vanishes or not, and to study the geography of the Poincar\'e polynomials
$$
P_{\Lambda, \varepsilon_1, \varepsilon_2}(t) = \sum_{k \in \Z} \dim_{\Z_2} 
LCH^{\varepsilon_1,\varepsilon_2}_k(\Lambda) \ t^k.
$$
for bilinearized Legendrian contact homology.

\section{Fundamental classes in bilinearized Legendrian contact homology}
\label{sec:fundclass}

There are several notions of equivalence for augmentations of DGAs that were introduced in the literature and used
in the context of the Chekanov-Eliashberg DGA. As the results of this section will show, it turns out that the equivalence 
relation among augmentations that controls best the behavior of bilinearized LCH is the notion of DGA homotopic 
augmentations~\cite[Definition 5.13]{NRSSZ}. Let $\varepsilon_1, \varepsilon_2$ be two augmentations of the DGA 
$(\mathcal{A}, \partial)$ over $\Z_2$. 
Recall that a linear map $K : \mathcal{A} \to \Z_2$ is said to be an $(\varepsilon_1, \varepsilon_2)$-derivation if $K(ab) = 
\varepsilon_1(a) K(b) + K(a) \varepsilon_2(b)$ for any $a, b \in \mathcal{A}$. We say that $\varepsilon_1$ is DGA 
homotopic to $\varepsilon_2$, and we write $\varepsilon_1 \sim \varepsilon_2$, if there exists an 
$(\varepsilon_1, \varepsilon_2)$-derivation $K : \mathcal{A} \to \Z_2$ of degree
$+1$ such that $\varepsilon_1 - \varepsilon_2 = K \circ \partial$. It is a standard fact that DGA homotopy is an equivalence 
relation~\cite[Lemma 26.3]{FHT}.

Note that the defining condition for a DGA homotopy admits a beautiful and convenient reformulation in terms of the
bilinearized complex.

\begin{Lem}  \label{lem:htpy}
Two augmentations $\varepsilon_1, \varepsilon_2$ are DGA homotopic if and only if there exists a linear map 
$\overline{K} : C(\Lambda) \to \Z_2$ of degree $+1$ such that $\varepsilon_1 - \varepsilon_2 = \overline{K} \circ 
\partial^{\varepsilon_1,\varepsilon_2}$ on $C(\Lambda)$.
\end{Lem}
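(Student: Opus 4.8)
The plan is to exploit the fact that the bilinearized differential $\partial^{\varepsilon_1,\varepsilon_2}$ is precisely the linearization of $\partial$ obtained by feeding its output through the $(\varepsilon_1,\varepsilon_2)$-derivation formalism. First I would record the elementary observation that an $(\varepsilon_1,\varepsilon_2)$-derivation $K : \mathcal{A}(\Lambda) \to \Z_2$ is uniquely determined by its restriction $\overline{K}$ to the subspace $C(\Lambda)$ spanned by the Reeb chords, via the iterated Leibniz rule
\begin{equation*}
K(b_1 \cdots b_k) = \sum_{i=1}^k \varepsilon_1(b_1)\cdots\varepsilon_1(b_{i-1})\,\overline{K}(b_i)\,\varepsilon_2(b_{i+1})\cdots\varepsilon_2(b_k),
\end{equation*}
together with $K(1)=0$. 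Conversely, any linear $\overline{K}:C(\Lambda)\to\Z_2$ of degree $+1$ extends through this formula to a unique $(\varepsilon_1,\varepsilon_2)$-derivation of degree $+1$; a short computation splitting a word into two subwords verifies the defining identity $K(uv)=\varepsilon_1(u)K(v)+K(u)\varepsilon_2(v)$. This sets up a bijection between degree-$+1$ linear maps $C(\Lambda)\to\Z_2$ and degree-$+1$ $(\varepsilon_1,\varepsilon_2)$-derivations of $\mathcal{A}(\Lambda)$.

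The second ingredient is the key identity: for any Reeb chord $a$,
\begin{equation*}
K(\partial a) = \overline{K}\big(\partial^{\varepsilon_1,\varepsilon_2} a\big).
\end{equation*}
This holds because applying $K$ to each word $b_1\cdots b_k$ occurring in $\partial a$ and using the Leibniz rule above produces exactly the scalar weights $\varepsilon_1(b_1)\cdots\varepsilon_1(b_{i-1})\,\varepsilon_2(b_{i+1})\cdots\varepsilon_2(b_k)$ attached to the generators $b_i$ in the definition of $\partial^{\varepsilon_1,\varepsilon_2}a$, onto which $\overline{K}$ then evaluates. In other words, the bilinearized differential is designed so that $K\circ\partial$ and $\overline{K}\circ\partial^{\varepsilon_1,\varepsilon_2}$ agree on generators whenever $\overline{K}=K|_{C(\Lambda)}$.

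With these in hand both directions are immediate. If $\varepsilon_1\sim\varepsilon_2$ via an $(\varepsilon_1,\varepsilon_2)$-derivation $K$ with $\varepsilon_1-\varepsilon_2=K\circ\partial$, I set $\overline{K}:=K|_{C(\Lambda)}$; evaluating on generators and invoking the key identity yields $\overline{K}\circ\partial^{\varepsilon_1,\varepsilon_2}=(\varepsilon_1-\varepsilon_2)|_{C(\Lambda)}$, as required. Conversely, given $\overline{K}:C(\Lambda)\to\Z_2$ of degree $+1$ with $\varepsilon_1-\varepsilon_2=\overline{K}\circ\partial^{\varepsilon_1,\varepsilon_2}$ on $C(\Lambda)$, I extend $\overline{K}$ to its associated $(\varepsilon_1,\varepsilon_2)$-derivation $K$; the key identity then shows that $K\circ\partial$ agrees with $\varepsilon_1-\varepsilon_2$ on every generator.

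The one point requiring care, and what I expect to be the main obstacle, is that the DGA-homotopy condition $\varepsilon_1-\varepsilon_2=K\circ\partial$ is an equality of maps on \emph{all} of $\mathcal{A}(\Lambda)$, whereas the bilinearized condition only concerns generators. To close the reverse direction I would show that both $\varepsilon_1-\varepsilon_2$ and $K\circ\partial$ are themselves $(\varepsilon_1,\varepsilon_2)$-derivations, so that agreement on generators forces agreement everywhere by the uniqueness in the first paragraph. That $\varepsilon_1-\varepsilon_2$ is an $(\varepsilon_1,\varepsilon_2)$-derivation follows at once from $\varepsilon_1$ and $\varepsilon_2$ being algebra maps, while for $K\circ\partial$ one applies the Leibniz rule for $\partial$ and then uses the crucial augmentation property $\varepsilon_i\circ\partial=0$ to annihilate the two terms that would otherwise spoil the derivation identity.
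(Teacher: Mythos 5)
Your proposal is correct and follows essentially the same route as the paper: the same extension of $\overline{K}$ to an $(\varepsilon_1,\varepsilon_2)$-derivation by the iterated Leibniz rule, the same key identity $K\circ\partial = \overline{K}\circ\partial^{\varepsilon_1,\varepsilon_2}$ on generators, and the same use of $\varepsilon_i\circ\partial=0$ to handle products. Your closing step (both $\varepsilon_1-\varepsilon_2$ and $K\circ\partial$ are $(\varepsilon_1,\varepsilon_2)$-derivations, hence determined by their values on generators) is just a repackaging of the paper's direct verification that the homotopy relation propagates from $a,b$ to $ab$.
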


\begin{proof}
Suppose first that $\varepsilon_1$ is DGA homotopic to $\varepsilon_2$. This implies in particular that
$\varepsilon_1(c) - \varepsilon_2(c) = K \circ \partial c$ for any $c \in C(\Lambda)$. Since $K$ is an 
$(\varepsilon_1, \varepsilon_2)$-derivation, it directly follows from the definition of the bilinearized differential 
that $K \circ \partial c = K \circ \partial^{\varepsilon_1,\varepsilon_2} c$. It then suffices to take $\overline{K}$
to be the restriction of $K$ to $C(\Lambda)$.

Suppose now that there exists a linear map $\overline{K} : C(\Lambda) \to \Z_2$ of degree $+1$ such that 
$\varepsilon_1 - \varepsilon_2 = \overline{K} \circ \partial^{\varepsilon_1,\varepsilon_2}$ on $C(\Lambda)$.
The map $\overline{K}$ determines a unique $(\varepsilon_1, \varepsilon_2)$-derivation $K : \mathcal{A} \to \Z_2$
via the relation $K(a_1 \ldots a_n) = \sum_{i=1}^k \varepsilon_1(a_1 \ldots a_{i-1}) \overline{K}(a_i) 
\varepsilon_2(a_{i+1}\ldots a_n)$ for all $a_1, \ldots, a_n \in \mathcal{A}$.  As above, these maps satisfy
$K \circ \partial c = \overline{K} \circ \partial^{\varepsilon_1,\varepsilon_2} c$, so that $\varepsilon_1 - \varepsilon_2 
= \overline{K} \circ \partial$ on $C(\Lambda)$. Now observe that 
$\varepsilon_1(ab) -\varepsilon_2(ab) = \varepsilon_1(a) \left(\varepsilon_1(b) - \varepsilon_2(b)\right) 
+ \left( \varepsilon_1(a) - \varepsilon_2(a) \right) \varepsilon_2(b)$ and on the other hand $K \circ \partial (ab) =
\varepsilon_1(\partial a) K(b) + \varepsilon_1(a) K(\partial b) + K(\partial a) \varepsilon_2(b) + K(a) \varepsilon_2(\partial  b)
=  \varepsilon_1(a) K (\partial b) + K (\partial a) \varepsilon_2(b)$. Hence if $a,b$ satisfy the DGA homotopy relation,
then $ab$ satisfies it as well. Since this relation holds on $C(\Lambda)$, it follows that it is also satisfied on $\mathcal{A}$.
\end{proof}

Note that, in the above proof, the extension of the linear map $\overline{K}$ to a unique 
$(\varepsilon_1, \varepsilon_2)$-derivation on $\mathcal{A}$, as well as the extension of 
the homotopy relation from $C(\Lambda)$ to $\mathcal{A}$ were
first established in a more general setup by K\'alm\'an in \cite[Lemma~2.18]{Kalman}.

With this suitable notion of equivalence for augmentations, we can now turn to the study of the fundamental class in bilinearized
LCH, via the maps $\tau_0$ and $\tau_n$ from the duality long exact sequence. The following proposition generalizes Theorem 5.5
from~\cite{EESdual}.

\begin{Prop}  \label{prop:tau}
Let $\varepsilon_1, \varepsilon_2$ be augmentations of the Chekanov-Eliashberg DGA $(\mathcal{A}, \partial)$ of a closed, 
connected $n$-dimensional Legendrian submanifold $\Lambda$ in $(J^1(M), \xi)$. The map $\tau_0 : 
LCH^{\varepsilon_1,\varepsilon_2}_0(\Lambda) \to H_0(\Lambda)$ from the duality long exact sequence vanishes if and only
if $\varepsilon_1$ and $\varepsilon_2$ are DGA homotopic.
\end{Prop}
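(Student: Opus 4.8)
The plan is to compare the vanishing of $\tau_0$ with the vanishing of a canonical degree-$0$ cohomology class built from $\varepsilon_1 - \varepsilon_2$, which Lemma~\ref{lem:htpy} shows detects DGA homotopy. First I record the algebraic input. Regard $\varepsilon_1 - \varepsilon_2$ as a linear map $C(\Lambda) \to \Z_2$ supported on Reeb chords of degree $0$, i.e. as a degree-$0$ cochain in the complex $(\mathrm{Hom}(C(\Lambda), \Z_2), (\partial^{\varepsilon_1,\varepsilon_2})^*)$ computing the bilinearized cohomology $LCH^\bullet_{\varepsilon_1,\varepsilon_2}(\Lambda)$. For a degree-$1$ chord $c$, expanding $(\varepsilon_1 - \varepsilon_2)(\partial^{\varepsilon_1,\varepsilon_2} c)$ and writing $(\varepsilon_1-\varepsilon_2)(b_i) = \varepsilon_1(b_i) - \varepsilon_2(b_i)$ in each term makes the sum over the marked factor telescope, collapsing to $\varepsilon_1(\partial c) - \varepsilon_2(\partial c)$, which vanishes because $\varepsilon_1,\varepsilon_2$ are augmentations. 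Hence $\varepsilon_1 - \varepsilon_2$ is a cocycle and defines a class $[\varepsilon_1 - \varepsilon_2] \in LCH^0_{\varepsilon_1,\varepsilon_2}(\Lambda)$. By Lemma~\ref{lem:htpy}, the homotopy relation $\varepsilon_1 - \varepsilon_2 = \overline{K}\circ \partial^{\varepsilon_1,\varepsilon_2}$ says exactly that this cocycle is a coboundary, so $\varepsilon_1 \sim \varepsilon_2$ if and only if $[\varepsilon_1-\varepsilon_2] = 0$ in $LCH^0_{\varepsilon_1,\varepsilon_2}(\Lambda)$.

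Next I reduce the statement about $\tau_0$ to a statement about the fundamental class. Since $\Lambda$ is connected, $H_0(\Lambda) \cong \Z_2 \cong H_n(\Lambda)$ with $\Z_2$-coefficients. Exactness of \eqref{eq:bilindualityseq} at $H_0$ gives $\mathrm{im}\,\tau_0 = \ker \sigma_n$, so $\tau_0 = 0$ if and only if $\sigma_n : H_0(\Lambda) \to LCH^n_{\varepsilon_2,\varepsilon_1}(\Lambda)$ is injective, i.e. nonzero. Rather than analyze this degree-$n$ class directly, I use that the duality long exact sequences for the ordered pairs $(\varepsilon_1,\varepsilon_2)$ and $(\varepsilon_2,\varepsilon_1)$ are $\Z_2$-dual to one another, reflecting Poincar\'e duality on $\Lambda$ (this is built into the construction of \eqref{eq:bilindualityseq} in \cite{BC}, following \cite{EESdual}). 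Under this duality the map $\tau_0$ for $(\varepsilon_1,\varepsilon_2)$ is the transpose of the connecting map $\sigma_0 : H_n(\Lambda) \to LCH^0_{\varepsilon_1,\varepsilon_2}(\Lambda)$ attached to the pair $(\varepsilon_2,\varepsilon_1)$. Consequently $\tau_0 = 0$ if and only if this $\sigma_0$ vanishes, and since $H_n(\Lambda) \cong \Z_2$ is generated by the fundamental class $[\Lambda]$, this happens precisely when $\sigma_0([\Lambda]) = 0$. Combining with the previous paragraph, the proposition follows once I identify $\sigma_0([\Lambda])$ with $[\varepsilon_1 - \varepsilon_2]$.

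This identification is the heart of the matter and the step I expect to be hardest, as it requires unwinding the chain-level definition of the connecting map in \cite{BC}. The fundamental class is represented by all of $\Lambda$, so $\sigma_0([\Lambda])$ is computed by a count of rigid $J$-holomorphic disks with one positive puncture at a degree-$0$ chord $c$, boundary on $\Lambda$, and a single free boundary marked point, where the negative punctures lying on the two sides of the marked point are filled in using $\varepsilon_2$ and $\varepsilon_1$ respectively. Summing over the position of the marked point among the negative punctures produces exactly the telescoping sum of the first paragraph, now realized geometrically, so that $\sigma_0([\Lambda])(c)$ collapses to $\varepsilon_1(c) - \varepsilon_2(c)$; the broken configurations that could obstruct this cancel by the augmentation relations, just as $\varepsilon_i \circ \partial = 0$ was used above. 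Granting $\sigma_0([\Lambda]) = [\varepsilon_1 - \varepsilon_2]$, the chain of equivalences reads $\tau_0 = 0 \iff \sigma_0([\Lambda]) = 0 \iff [\varepsilon_1 - \varepsilon_2] = 0 \text{ in } LCH^0_{\varepsilon_1,\varepsilon_2}(\Lambda) \iff \varepsilon_1 \sim \varepsilon_2$, which is the desired statement; the same computation read at the top of \eqref{eq:bilindualityseq} simultaneously yields Theorem~\ref{thm:A}.
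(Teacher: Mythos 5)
Your overall logic is sound and your chain of equivalences does prove the proposition, but you take a detour that the paper avoids. The paper computes $\tau_0$ itself at chain level: with a Morse function having a unique minimum $m$, the marked-point condition (lying in the stable manifold of $m$) is open, so an index count forces every rigid configuration to be a trivial strip over a single Reeb chord $c$, with the marked point at one of the two endpoints of $c$; the two positions contribute $\varepsilon_1(c)$ and $\varepsilon_2(c)$, giving $\tau_0 = \varepsilon_1 - \varepsilon_2$ on the nose, after which Lemma~\ref{lem:htpy} finishes the argument exactly as in your last displayed chain of equivalences. You instead pass to the adjoint map $\sigma_0$ of the $(\varepsilon_2,\varepsilon_1)$ sequence and evaluate it on the fundamental class. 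This is legitimate (the adjointness of $\tau_k$ and $\sigma_k$ is precisely what the paper invokes later to deduce Theorem~\ref{thm:A} from this proposition), and your explicit verification that $\varepsilon_1-\varepsilon_2$ is a cocycle defining a class in $LCH^0_{\varepsilon_1,\varepsilon_2}(\Lambda)$ is a clean reformulation of Lemma~\ref{lem:htpy}; but it adds a dependency (the bilinearized version of~\cite[Proposition~3.9]{EESdual}) that the direct computation of $\tau_0$ does not need, and it buys you nothing here since the geometric input at the bottom is the same mirror-image disk count.

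The one step you should fix is the mechanism you propose for the identification $\sigma_0([\Lambda]) = [\varepsilon_1-\varepsilon_2]$, which you rightly flag as the heart of the matter. You describe it as a geometric realization of the telescoping sum, summing over the position of a free marked point among the many negative punctures of a rigid disk. That is not how the count works: once the marked-point constraint has codimension zero, the underlying moduli space of disks (before adding the marked point) has formal dimension $-1$ after quotienting by the $\R$-translation, so it is generically empty \emph{except} where the $\R$-action fails to be free, i.e.\ for trivial strips over a single chord. Each contributing configuration therefore has exactly one negative puncture, and there is nothing to telescope; the telescoping identity is the purely algebraic fact that $\varepsilon_1-\varepsilon_2$ annihilates $\partial^{\varepsilon_1,\varepsilon_2}C_1$, not a description of the moduli spaces. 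With the rigidity argument substituted for the telescoping picture, your proof closes up correctly.
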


\begin{proof}
Let $f$ be a Morse function on $\Lambda$ with a unique minimum at point $m$ and $g$ be a Riemannian metric on $\Lambda$. 
Since the stable manifold of $m$ is open and dense in $\Lambda$, for a generic choice of the Morse-Smale pair $(f,g)$, the 
endpoints of all Reeb chords of $\Lambda$ are in this stable manifold. 
The vector space $H_0(\Lambda)$ is generated by $m$ and we identify it with $\Z_2$.
By the results of~\cite{EESdual}, the map $\tau_0$ counts rigid $J$-holomorphic disks with boundary on $\Lambda$, with a 
positive puncture on the boundary and with a marked point on the boundary mapping to the stable manifold of $m$. This disk 
can have extra negative punctures on the boundary; these are augmented by $\varepsilon_1$ if they sit between the positive 
puncture and the marked point, and by $\varepsilon_2$ if they sit between the marked point and the positive puncture. Since 
mapping to $m$ is an open condition on $\Lambda$, such rigid configurations can only occur when the image of the disk boundary 
is discrete in $\Lambda$. In other words, the holomorphic disk maps to the symplectization of a Reeb chord $c$ of $\Lambda$. 
Since there is a unique positive puncture, this map is not a covering, and there is a unique negative puncture at $c$. There is a 
unique such $J$-holomorphic disk for any chord $c$ of $\Lambda$. The marked point maps to the starting point or to the ending 
point of the chord $c$ in $\Lambda$. If the marked point maps
to the starting point of $c$, the negative puncture sits between the positive puncture and the marked point on the boundary of the disk,
which therefore contributes $\varepsilon_2(c)$ to $\tau_0(c)$ at chain level. If the marked point maps to the ending point of $c$, the 
negative puncture sits between the marked point and the positive puncture on the boundary of the disk, which therefore contributes 
$\varepsilon_1(c)$ to $\tau_0(c)$. We conclude that the map $\tau_0$ is given at chain level by $\varepsilon_1 - \varepsilon_2$.

If $\varepsilon_1$ and $\varepsilon_2$ are DGA homotopic, then by Lemma~\ref{lem:htpy} the map $\tau_0$ is null homotopic
and therefore vanishes in homology. On the other hand, if $\varepsilon_1$ and $\varepsilon_2$ are not DGA homotopic, 
Lemma~\ref{lem:htpy} implies that the map $\varepsilon_1 - \varepsilon_2 : C_0(\Lambda)\to \Z_2$ does not factor through 
the bilinearized differential $\partial^{\varepsilon_1,\varepsilon_2}$. In other words, there exists $a \in C_0(\Lambda)$ such that
$\partial^{\varepsilon_1,\varepsilon_2} a = 0$ but $\varepsilon_1(a) - \varepsilon_2(a) \neq 0$. But then the homology class
$[a] \in LCH^{\varepsilon_1,\varepsilon_2}_0(\Lambda)$ satisfies $\tau_0([a])\neq 0$, so that $\tau_0$ does not vanish in homology.
\end{proof}

We are now in position to prove the first main result of this paper.

\begin{proof}[Proof of Theorem~\ref{thm:A}]
In the duality long exact sequence~\eqref{eq:bilindualityseq} for bilinearized LCH, the maps $\tau_k$ and $\sigma_k$ are adjoint
in the sense of~\cite[Proposition 3.9]{EESdual} as in the linearized case. The proof of this fact is essentially identical in the 
bilinearized case: the holomorphic disks counted by $\tau_k$ are still in bijective correspondence with those counted by $\sigma_k$.
In the bilinearized case, it is also necessary to use the fact that the extra negative punctures on corresponding disks are augmented 
with the same augmentations, in order to reach the conclusion.

In particular, $\tau_n$ vanishes if and only if $\sigma_n$ vanishes. Since $H_0(\Lambda) \cong \Z_2$, the exactness of the duality
sequence~\eqref{eq:bilindualityseq} implies that $\sigma_n$ vanishes if and only if $\tau_0$ does not vanish. 
By Proposition~\ref{prop:tau}, this means that $\tau_n$ vanishes if and only if the augmentations $\varepsilon_1$ and 
$\varepsilon_2$ are not DGA homotopic.
\end{proof}

This difference in the behavior of bilinearized LCH can be used to determine DGA homotopy classes of augmentations. More
precisely, the next proposition shows that bilinearized LCH provides an explicit criterion to decide whether two augmentations 
are DGA homotopic or not.

\begin{Prop}  \label{prop:aughtpy}
Let $\varepsilon_1, \varepsilon_2$ be augmentations of the Chekanov-Eliashberg DGA $(\mathcal{A}, \partial)$ of a closed, 
connected $n$-dimensional Legendrian submanifold $\Lambda$ in $(J^1(M), \xi)$. Then
$$
\dim_{\Z_2} LCH^{\varepsilon_2,\varepsilon_1}_n(\Lambda) - \dim_{\Z_2} LCH^{\varepsilon_1,\varepsilon_2}_{-1}(\Lambda)
= \left\{ \begin{array}{ll}
0 & \textrm{if } \varepsilon_1 \not\sim \varepsilon_2, \\
1 & \textrm{if } \varepsilon_1 \sim \varepsilon_2.
\end{array}\right.
$$
\end{Prop}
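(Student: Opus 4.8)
The plan is to read off the relevant portion of the bilinearized duality long exact sequence~\eqref{eq:bilindualityseq} and feed it into Proposition~\ref{prop:tau}. Taking $k = 0$ in~\eqref{eq:bilindualityseq}, and using that $\Lambda$ is a closed connected $n$-manifold so that $H_{-1}(\Lambda) = 0$, the sequence localizes near cohomological degree $n$ and homological degree $-1$ to the exact strand
$$
LCH^{\varepsilon_1,\varepsilon_2}_0(\Lambda) \xrightarrow{\tau_0} H_0(\Lambda) \xrightarrow{\sigma_n} LCH^n_{\varepsilon_2,\varepsilon_1}(\Lambda) \longrightarrow LCH^{\varepsilon_1,\varepsilon_2}_{-1}(\Lambda) \longrightarrow 0 .
$$
First I would extract from exactness at the last two nonzero terms the short exact sequence $0 \to \operatorname{im}(\sigma_n) \to LCH^n_{\varepsilon_2,\varepsilon_1}(\Lambda) \to LCH^{\varepsilon_1,\varepsilon_2}_{-1}(\Lambda) \to 0$, which yields the dimension count $\dim_{\Z_2} LCH^n_{\varepsilon_2,\varepsilon_1}(\Lambda) - \dim_{\Z_2} LCH^{\varepsilon_1,\varepsilon_2}_{-1}(\Lambda) = \dim_{\Z_2} \operatorname{im}(\sigma_n)$.

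Next I would replace the cohomology dimension by a homology dimension. Over the field $\Z_2$, and since $C(\Lambda)$ is finite-dimensional (there are only finitely many Reeb chords after a Legendrian isotopy making them nondegenerate), dualizing the complex preserves Betti numbers in each degree, so $\dim_{\Z_2} LCH^n_{\varepsilon_2,\varepsilon_1}(\Lambda) = \dim_{\Z_2} LCH^{\varepsilon_2,\varepsilon_1}_n(\Lambda)$. This turns the left-hand side into exactly the difference appearing in the statement, and it remains to show that $\dim_{\Z_2} \operatorname{im}(\sigma_n) \in \{0,1\}$ equals $1$ precisely when $\varepsilon_1 \sim \varepsilon_2$.

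Finally I would evaluate $\dim_{\Z_2} \operatorname{im}(\sigma_n)$ using exactness at $H_0(\Lambda)$: since $\Lambda$ is connected, $H_0(\Lambda) \cong \Z_2$ is one-dimensional, and $\ker \sigma_n = \operatorname{im}(\tau_0)$, whence $\dim_{\Z_2}\operatorname{im}(\sigma_n) = 1 - \dim_{\Z_2}\operatorname{im}(\tau_0)$. By Proposition~\ref{prop:tau}, the map $\tau_0$ vanishes if and only if $\varepsilon_1 \sim \varepsilon_2$; otherwise $\tau_0$ is nonzero, hence surjective onto the one-dimensional $H_0(\Lambda)$. Thus $\dim_{\Z_2}\operatorname{im}(\tau_0)$ equals $0$ when $\varepsilon_1 \sim \varepsilon_2$ and $1$ otherwise, and substituting gives the claimed value of the difference.

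The computation is essentially bookkeeping once Proposition~\ref{prop:tau} is available. I expect the only points genuinely requiring care to be the passage from cohomology to homology dimensions over $\Z_2$ and, above all, the correct tracking of the augmentation orders $(\varepsilon_1,\varepsilon_2)$ versus $(\varepsilon_2,\varepsilon_1)$ as one reads across~\eqref{eq:bilindualityseq}, since an ordering slip there is exactly what would break the final identification with $LCH^{\varepsilon_2,\varepsilon_1}_n(\Lambda)$ and $LCH^{\varepsilon_1,\varepsilon_2}_{-1}(\Lambda)$.
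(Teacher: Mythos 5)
Your argument is correct and follows the paper's own proof essentially verbatim: both extract the strand $H_0(\Lambda)\to LCH^n_{\varepsilon_2,\varepsilon_1}(\Lambda)\to LCH^{\varepsilon_1,\varepsilon_2}_{-1}(\Lambda)\to 0$ from the duality sequence, identify the difference of dimensions with $\dim_{\Z_2}\operatorname{im}\sigma_n$ after equating cohomological and homological Betti numbers over $\Z_2$, and evaluate the rank of $\sigma_n$ via exactness at $H_0(\Lambda)\cong\Z_2$ together with Proposition~\ref{prop:tau}. Your explicit justification of the cohomology--homology dimension equality and of the rank computation only spells out steps the paper leaves implicit.
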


\begin{proof}
By the duality exact sequence~\eqref{eq:bilindualityseq}, we have
$$
H_0(\Lambda) \cong \Z_2 \stackrel{\sigma_n}{\longrightarrow} LCH^n_{\varepsilon_2, \varepsilon_1}(\Lambda) \to 
LCH^{\varepsilon_1, \varepsilon_2}_{-1}(\Lambda) \to H_{-1}(\Lambda) = 0 .
$$
In other words, $LCH^n_{\varepsilon_2, \varepsilon_1}(\Lambda)/\textrm{Im } \sigma_n \cong 
LCH^{\varepsilon_1, \varepsilon_2}_{-1}(\Lambda)$. Taking into account that $\dim_{\Z_2} 
LCH^n_{\varepsilon_2, \varepsilon_1}(\Lambda) = \dim_{\Z_2} LCH_n^{\varepsilon_2, \varepsilon_1}(\Lambda)$, 
we obtain the desired result since, as in the proof of Theorem~\ref{thm:A}, the rank of $\sigma_n$ is $1$ when
$\varepsilon_1 \sim \varepsilon_2$ and vanishes when $\varepsilon_1 \not\sim \varepsilon_2$. 
\end{proof}

Corollary~\ref{cor:A} follows immediately from the above proposition.

\begin{Ex} \label{ex:m8_21}
Let us consider the Legendrian knot $K_2$ studied by Melvin and Shrestha in~\cite[Section~3]{MS}, which is topologically
the mirror image of the knot $8_{21}$, and illustrated in Figure~\ref{fig:m8_21}.

\begin{figure}
\labellist
\small\hair 2pt
\endlabellist
  \centerline{\includegraphics[width=6cm, height=3cm]{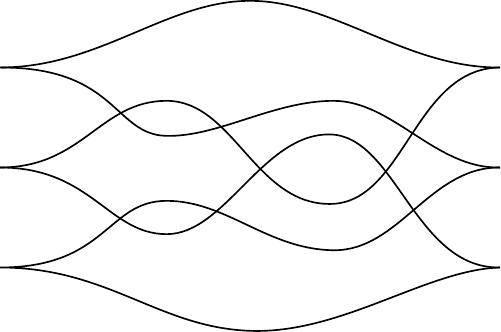}}
  \caption{Front projection of the Legendrian knot $K_2$.}
  \label{fig:m8_21}
\end{figure}

It is shown in~\cite[Section~3]{MS} that the Chekanov-Eliashberg DGA of this Legendrian knot $K_2$ has exactly $16$ augmentations,
which split into a set $A$ of $4$ augmentations and a set $B$ of $12$ augmentations such that
$P_{K_2, \varepsilon}(t) = 2t+4 + t^{-1}$ if $\varepsilon \in A$ and $P_{K_2, \varepsilon}(t) = t + 2$ if 
$\varepsilon \in B$. This implies that augmentations in $A$ are not DGA homotopic to augmentations 
in $B$. However, the number of DGA homotopy classes of augmentations for $K_2$ was not determined 
in~\cite{MS}, as linearized LCH does not suffice to obtain this information.

Using Proposition~\ref{prop:aughtpy}, these DGA homotopy classes can be determined systematically.
It turns out that the augmentations in $A$ are pairwise not DGA homotopic, because the 
Poincar\'e polynomial of any such pair of augmentations  is $t + 3 + t^{-1}$.
On the other hand,
the set $B$ splits into $6$ DGA homotopy classes $\mathcal{C}_1, \ldots, \mathcal{C}_6$ 
of augmentations. The bLCH Poincar\'e polynomials are given by $t+2$ for two DGA homotopic augmentations 
in $B$, by $1$ for two non DGA homotopic  augmentations both in $\mathcal{C}_1 
\cup \mathcal{C}_2 \cup \mathcal{C}_3$ or in $\mathcal{C}_4 \cup \mathcal{C}_5
\cup \mathcal{C}_6$, and by $t+2$ and $2 + t^{-1}$ otherwise.

These calculations are straightforward but tedious. A suitable Python code run by a computer gives
the above answer instantly.
\end{Ex}

We conclude our study of the fundamental classes in bilinearized LCH with a useful description 
of their behavior when performing a connected sum. To this end, it is convenient to introduce some additional notation
about the map $\tau_n$ in the duality exact sequence~\eqref{eq:bilindualityseq}. Its target space $H_n(\Lambda)$ is 
spanned by the fundamental classes $[\Lambda_i]$ of the connected components $\Lambda_i$ of the Legendrian submanifold
$\Lambda$. We can therefore decompose $\tau_n$ as $\sum_i \tau_{n,i} [\Lambda_i]$, where the maps $\tau_{n,i}$ take
their values in $\Z_2$.

\begin{Prop}  \label{prop:connsum}
Let $\Lambda$ be a Legendrian link in $J^1(M)$ equipped with two augmentations $\varepsilon_1$ 
and $\varepsilon_2$. Let $\overline{\Lambda}$ be the Legendrian submanifold obtained by performing
a connected sum between two connected components $\Lambda_1$ and $\Lambda_2$ of $\Lambda$,
and let $\overline{\varepsilon}_1$ and $\overline{\varepsilon}_2$ be the augmentations induced by
$\varepsilon_1$ and $\varepsilon_2$.

If the map $\tau_{n,1} - \tau_{n,2}$ constructed from the map $\tau_n$
in the duality exact sequence~\eqref{eq:bilindualityseq} vanishes, then 
$P_{\overline{\Lambda}, \overline{\varepsilon}_1, \overline{\varepsilon}_2}(t) 
= P_{\Lambda, \varepsilon_1, \varepsilon_2}(t) + t^{n-1}$. Otherwise,
$P_{\overline{\Lambda}, \overline{\varepsilon}_1, \overline{\varepsilon}_2}(t) 
= P_{\Lambda, \varepsilon_1, \varepsilon_2}(t) - t^n$.
\end{Prop}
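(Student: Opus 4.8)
The plan is to reduce the statement to a short exact sequence of complexes and to read off the two cases from the associated long exact sequence, the only genuinely geometric input being a precise description of the Reeb chord created by the connected sum. First I would set up the local model. Following Dimitroglou Rizell~\cite{DR}, performing the connected sum of $\Lambda_1$ and $\Lambda_2$ along a thin Legendrian handle creates exactly one new Reeb chord $c$, which can be made arbitrarily short by shrinking the neck, and leaves all other Reeb chords of $\Lambda$ unchanged. Since $c$ is then the shortest chord, an action argument shows that there is no nonconstant $J$-holomorphic disk with positive puncture at $c$, and that the only new rigid disks are those with a negative puncture at $c$, emanating from chords $a$ with $|a| = |c| + 1$. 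I would then read off from the local model that $c$ sits in degree $n-1$ and that the induced augmentations satisfy $\overline\varepsilon_j(c) = 0$ and agree with $\varepsilon_j$ on all old chords. Consequently one gets $\overline{C}(\overline\Lambda) = C(\Lambda) \oplus \langle c\rangle$ with
$$
\partial^{\overline\varepsilon_1,\overline\varepsilon_2}\big|_{C(\Lambda)} = \partial^{\varepsilon_1,\varepsilon_2} + \beta(\cdot)\, c, \qquad \partial^{\overline\varepsilon_1,\overline\varepsilon_2} c = 0,
$$
where $\beta : C_n(\Lambda) \to \Z_2$ counts the bilinearized neck disks from a degree $n$ chord to $c$.

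Next I would organize the homological algebra. Since $\partial^{\overline\varepsilon_1,\overline\varepsilon_2} c = 0$, the line $\langle c\rangle$ is a subcomplex concentrated in degree $n-1$, and because the neck is thin no disk avoiding $c$ is created, so the quotient complex is exactly $(C(\Lambda),\partial^{\varepsilon_1,\varepsilon_2})$. As $\partial^{\overline\varepsilon_1,\overline\varepsilon_2}$ squares to zero, the functional $\beta$ is automatically a cocycle and descends to a map $\beta_* : LCH^{\varepsilon_1,\varepsilon_2}_n(\Lambda) \to \Z_2$. The short exact sequence of complexes
$$
0 \to \langle c\rangle \to \overline{C}(\overline\Lambda) \to (C(\Lambda),\partial^{\varepsilon_1,\varepsilon_2}) \to 0
$$
then yields a long exact sequence in which, since $\langle c\rangle$ is concentrated in degree $n-1$, the only possibly nonzero connecting homomorphism is exactly $\beta_* : LCH^{\varepsilon_1,\varepsilon_2}_n(\Lambda) \to H_{n-1}(\langle c\rangle) = \Z_2$. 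A direct count of ranks in this sequence shows that if $\beta_* = 0$ the homology gains one generator in degree $n-1$, giving $P_{\overline\Lambda,\overline\varepsilon_1,\overline\varepsilon_2} = P_{\Lambda,\varepsilon_1,\varepsilon_2} + t^{n-1}$, whereas if $\beta_*$ is surjective the former degree $n$ class mapping onto $c$ dies while $c$ becomes a boundary, giving $P_{\overline\Lambda,\overline\varepsilon_1,\overline\varepsilon_2} = P_{\Lambda,\varepsilon_1,\varepsilon_2} - t^n$.

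It remains to identify $\beta_*$ with $\tau_{n,1} - \tau_{n,2}$, and this is the step I expect to be the main obstacle. The idea is to run the argument of Proposition~\ref{prop:tau}: a neck disk contributing to $\beta(a)$ should degenerate, as the handle is shrunk, to a disk with positive puncture at $a$ and a boundary marked point at the connected-sum locus, with the remaining negative punctures augmented by $\varepsilon_1$ before and by $\varepsilon_2$ after the marked point. This marked point lies at the junction where the top cells of $\Lambda_1$ and $\Lambda_2$ have been glued, so reading it on the $\Lambda_1$-side reproduces the count defining $\tau_{n,1}$ and on the $\Lambda_2$-side the count defining $\tau_{n,2}$, while the bilinearized ordering of the two augmentations distinguishes the two sides with opposite contributions, producing $\tau_{n,1} - \tau_{n,2}$ at chain level. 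The delicate points are to make the neck-stretching correspondence between $c$-disks and marked-point disks rigorous and to track the augmentation order carefully through the junction, exactly as that order had to be respected in the duality sequence~\eqref{eq:bilindualityseq}. Granting this identification, $\beta_* = \tau_{n,1} - \tau_{n,2}$ and the two cases of the previous paragraph are precisely the two cases of the statement.
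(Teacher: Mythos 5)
Your overall strategy coincides with the paper's: both proofs realize the bilinearized complex of $\overline{\Lambda}$ as an extension of the complex of $\Lambda$ by a one-dimensional subcomplex spanned by a closed degree-$(n-1)$ generator, read off the two cases from the resulting long exact sequence (whose only interesting connecting map is $LCH^{\varepsilon_1,\varepsilon_2}_n(\Lambda)\to\Z_2$), and identify that connecting map with $\tau_{n,1}-\tau_{n,2}$ via the description of $\tau_n$ as a count of disks with a boundary marked point. The difference lies in how the geometric input is justified, and here your write-up has a genuine gap that you yourself flag: the identification $\beta_*=\tau_{n,1}-\tau_{n,2}$ is only sketched via a neck-stretching degeneration, and making that compactness/gluing correspondence between $c$-punctured disks and marked-point disks rigorous is real work that the proposal does not do. The paper avoids it entirely by invoking \cite[Theorem~1.6]{DR}, which provides an isomorphism of DGAs $\Psi$ between $\mathcal{A}(\overline{\Lambda})$ and a DGA $\mathcal{A}(\Lambda;S)$ generated by the old chords plus a formal generator $s$ of degree $n-1$ with $\partial_S s=0$, where $\partial_S$ is \emph{defined} (see \cite[Section~1.1.3]{DR}) by counting disks with a boundary point constraint at $p_1$ or $p_2$. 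That definition matches verbatim the marked-point description of $\tau_n$ recalled at the start of the paper's proof, so the identity $\rho_n=(\tau_{n,1}-\tau_{n,2})\,s$ becomes a comparison of formulas rather than a degeneration of moduli spaces.

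A second, smaller soft spot: you assert that the connected sum literally adds one generator and changes the differential of old chords only by terms hitting $c$ (``no disk avoiding $c$ is created because the neck is thin''). This is not obvious --- disks can a priori cross the handle region without puncturing at $c$ --- and it is again exactly what \cite{DR} supplies, in the form of the isomorphism $\Psi$ (which need not be the identity on generators) together with the compatibility $\overline{\varepsilon}_j=\Psi^*\varepsilon_j$ needed to conclude that the bilinearized complexes agree. Your homological algebra and the two conclusions ($+t^{n-1}$ when the connecting map vanishes, $-t^n$ otherwise) are correct and agree with the paper once these two points are granted.
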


\begin{proof}
As explained in~\cite[Section~3.2.5]{BC}, the map $\tau_n$ in the duality exact sequence~\eqref{eq:bilindualityseq} for $\Lambda$ counts 
holomorphic disks in the symplectization of $J^1(M)$ with boundary on the symplectization of $\Lambda$, having a positive puncture asymptotic 
to a chord $c$ of $\Lambda$ and a marked point on the boundary mapped to a fixed generic point $p_j$ of a connected component 
$\Lambda_j$ of $\Lambda$. This disk can also carry negative punctures on the boundary; let us say that those located between the positive 
puncture and the chord (with respect to the natural orientation of the boundary) are asymptotic to chords $c^-_1, \ldots, c^-_r$, while those
between the marked point and the positive puncture are asymptotic to $c^-_{r+1}, \ldots, c^-_{r+s}$. Let us denote by 
$\mathcal{M}(c;c^-_1, \ldots, c^-_r, p_j, c^-_{r+1}, \ldots, c^-_{r+s})$ the moduli space of such holomorphic disks, modulo translation in 
the $\R$ direction of the symplectization. The map $\tau_n$ is then given by
\begin{eqnarray*}
\lefteqn{\tau_n(c) = \sum_j \#_2 \mathcal{M}(c;c^-_1, \ldots, c^-_r, p_j, c^-_{r+1}, \ldots, c^-_{r+s})} \\
&& \hspace{2cm} \varepsilon_1(c^-_1) \ldots \varepsilon_1(c^-_r) \varepsilon_2(c^-_{r+1}) \ldots \varepsilon_2(c^-_{r+s}) [\Lambda_j].
\end{eqnarray*}

On the other hand, the effect of a connected sum on bilinearized LCH can be deduced from the results of Dimitroglou Rizell on the full 
Chekanov-Eliashberg DGA~\cite[Theorem~1.6]{DR}. There is an isomorphism of DGAs $\Psi : (\mathcal{A}(\overline{\Lambda}), \partial_{\overline{\Lambda}}) \to (\mathcal{A}(\Lambda; S), \partial_S)$ between the Chekanov-Eliashberg DGA of $\overline{\Lambda}$ 
and the DGA $(\mathcal{A}(\Lambda; S), \partial_S)$ generated by the Reeb chords of $\Lambda$ as well as a formal generator $s$ of 
degree $n-1$, equipped with a differential $\partial_S$ satisfying in particular $\partial_S s = 0$. In this notation, $S$ stands for the pair of points $\{ p_1
\in \Lambda_1, p_2 \in \Lambda_2 \}$ in a neighborhood of which the connected sum is performed. Any augmentation $\varepsilon$ of the 
Chekanov-Eliashberg DGA of $\Lambda$ can be extended to an augmentation of $(\mathcal{A}(\Lambda; S), \partial_S)$ by setting 
$\varepsilon(s) = 0$. Moreover, the pullback $\Psi^*\varepsilon$ of this extension to the Chekanov-Eliashberg DGA of $\overline{\Lambda}$
coincides with the augmentation induced on $\overline{\Lambda}$ from the original augmentation $\varepsilon$ for $\Lambda$ via the
surgery Lagrangian cobordism between $\overline{\Lambda}$ and $\Lambda$. In particular, we have $\overline{\varepsilon}_1 = 
\Psi^* \varepsilon_1$ and $\overline{\varepsilon}_2 = \Psi^* \varepsilon_2$. Applying the bilinearization procedure to the map $\Psi$, 
we obtain a chain complex isomorphism $\Psi^{\varepsilon_1, \varepsilon_2}$ between the bilinearized chain complex for $\overline{\Lambda}$
and the chain complex $(C(\Lambda, S), \partial_S^{\varepsilon_1, \varepsilon_2})$ generated by Reeb chords of $\Lambda$ and the formal 
generator $s$. Since $\partial_S^{\varepsilon_1, \varepsilon_2} s = 0$, the line spanned by $s$ forms a subcomplex of $(C(\Lambda, S), \partial_S^{\varepsilon_1, \varepsilon_2})$. Moreover, the quotient complex is exactly the bilinearized chain complex for $\Lambda$. 
We therefore obtain a long exact sequence in homology
$$
\ldots \to LCH^{\overline{\varepsilon}_1, \overline{\varepsilon}_2}_k(\overline{\Lambda}) \to LCH^{\varepsilon_1, \varepsilon_2}_k(\Lambda) 
\stackrel{\rho_k}{\to} \Z_2[s]_{k-1} \to LCH^{\overline{\varepsilon}_1, \overline{\varepsilon}_2}_{k-1}(\overline{\Lambda}) \to \ldots
$$
that corresponds to the long exact sequence obtained in~\cite[Theorem~2.1]{BST}
for generating family homology. This exact sequence implies that bilinearized LCH remains unchanged by a connected sum, except possibly in
degrees $n-1$ and $n$. The map $\rho_n$ is the part of the bilinearized differential $\partial_S^{\varepsilon_1, \varepsilon_2}$ from the bilinearized
complex for $\Lambda$ to the line spanned by $s$. According to the definition~\cite[Section~1.1.3]{DR} of $\partial_S$ and the above description of 
$\tau_n$, this map is given by $\rho_n = (\tau_{n,1} - \tau_{n,2}) s$. 

If $\rho_n=0$, the generator $s$ injects into $LCH_{n-1}^{\overline{\varepsilon}_1, \overline{\varepsilon}_2}(\overline{\Lambda})$, resulting
in an exact term $t^{n-1}$ in the Poincar\'e polynomial.  If $\rho_n \neq 0$, the map $LCH^{\overline{\varepsilon}_1, \overline{\varepsilon}_2}_n(\overline{\Lambda}) \to LCH^{\varepsilon_1, \varepsilon_2}_n(\Lambda)$ has a $1$-dimensional cokernel, resulting in the loss of a term
$t^n$ in the Poincar\'e polynomial.
\end{proof}

\section{Geography of bilinearized Legendrian contact homology}
\label{sec:geog}

In this section, we study the possible values for the Poincar\'e polynomial $P_{\Lambda, \varepsilon_1, \varepsilon_2}$
of the bilinearized LCH for a closed, connected Legendrian submanifold $\Lambda$ in $J^1(M)$ with $\dim M = n$,
equipped with two augmentations $\varepsilon_1$ and $\varepsilon_2$ of its Chekanov-Eliashberg DGA.

When $\varepsilon_1 = \varepsilon_2$, this geography question was completely answered in~\cite{BST} for generating
family homology. As explained in Section~\ref{sec:bLCH}, this result extends to linearized LCH via the work of 
Dimitroglou Rizell~\cite{DR}. Moreover, bilinearized LCH is invariant under changes of augmentations within their
DGA homotopy classes~\cite[Section~5.3]{NRSSZ}. Therefore, the geography of bilinearized LCH is already known 
when $\varepsilon_1 \sim \varepsilon_2$.

%
%
\subsection{Basic properties of bLCH Poincar\'e polynomials}

We now turn to the case $\varepsilon_1 \not\sim \varepsilon_2$, and describe the possible Poincar\'e polynomials
for bilinearized LCH.

\begin{defn}  \label{def:bLCHadm}
A bLCH-admissible polynomial is the data of a Laurent polynomial $P$ with nonnegative integral coefficients together
with a splitting $P = q + p$ invoving two Laurent polynomials with nonnegative integral coefficients $p$ and $q$ such that
\begin{enumerate}
\item[(i)] $q$ is a polynomial of degree at most $n-1$ with $q(0)=1$,
\item[(ii)] $p(-1)$ is even if $n=1$ and $p(-1) \le \frac12(1-q(-1))$ if $n =2$.
\end{enumerate}
\end{defn}

We first show that the Poincar\'e polynomial of bilinearized LCH always has this form.

\begin{Prop}  \label{prop:bLCHadm}
Let $\varepsilon_1, \varepsilon_2$ be augmentations of the Chekanov-Eliashberg DGA $(\mathcal{A}, \partial)$ of a closed, 
connected $n$-dimensional Legendrian submanifold $\Lambda$ with vanishing Maslov class in $(J^1(M), \xi)$. 
If $\varepsilon_1$ and  $\varepsilon_2$ are not
DGA homotopic, then the Poincar\'e polynomial $P_{\Lambda, \varepsilon_1, \varepsilon_2}$ corresponding to 
$LCH^{\varepsilon_1, \varepsilon_2}(\Lambda)$ is bLCH-admissible.
\end{Prop}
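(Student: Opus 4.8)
The plan is to read the decomposition $P = q + p$ straight off the duality long exact sequence~\eqref{eq:bilindualityseq}, setting $q(t) = \sum_k \mathrm{rk}(\tau_k)\,t^k$ (the image part) and $p(t) = \sum_k \dim_{\Z_2}\ker(\tau_k)\,t^k$ (the kernel part); then $P = q+p$ with nonnegative integral coefficients by construction. Condition (i) is immediate from the two endpoint results. Since $\varepsilon_1 \not\sim \varepsilon_2$, Theorem~\ref{thm:A} gives that $\tau_n$ is not surjective, hence (as $H_n(\Lambda)\cong\Z_2$) that $\tau_n = 0$; together with $H_k(\Lambda)=0$ for $k\notin\{0,\dots,n\}$ this shows $q$ is a polynomial of degree at most $n-1$. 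Proposition~\ref{prop:tau} gives that $\tau_0$ does not vanish, so $\mathrm{rk}(\tau_0)=1$ and $q(0)=1$.

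The heart of the matter is condition (ii). The key input is the adjointness of $\tau_k$ and $\sigma_k$ in~\eqref{eq:bilindualityseq} already used in the proof of Theorem~\ref{thm:A}: carried out with careful bookkeeping of the two augmentation orderings it yields $\mathrm{rk}(\tau_k)=\mathrm{rk}(\sigma_k)$, and combining this with exactness at $H_{n-k}(\Lambda)$ and Poincar\'e duality $h_{n-k}=h_k$ (where $h_k=\dim_{\Z_2}H_k(\Lambda)$) one gets the symmetric rank relation $\mathrm{rk}(\tau_k)+\mathrm{rk}(\tau_{n-k})=h_k$, i.e. $q(t)+t^n q(t^{-1})=\sum_k h_k t^k$. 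I would then use that $P(-1)$ is the graded Euler characteristic $\sum_c(-1)^{|c|}$ of the Reeb chords, hence independent of the augmentations: writing $E=P(-1)=P_{\Lambda,\varepsilon_2,\varepsilon_1}(-1)$ and taking the alternating sum of dimensions along the (finitely supported, exact) sequence~\eqref{eq:bilindualityseq} gives $(1+(-1)^n)E=\chi(\Lambda)$.

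For $n$ even, evaluating the rank relation at $t=-1$ gives $2q(-1)=\chi(\Lambda)$, while the Euler characteristic identity gives $E=\chi(\Lambda)/2$; hence $p(-1)=P(-1)-q(-1)=E-\chi(\Lambda)/2=0$. For $n$ odd, summing the rank relation gives $q(1)=\tfrac12\sum_k h_k$, so $q(-1)\equiv q(1)\equiv\tfrac12\sum_k h_k \pmod 2$; and applying the linearized duality~\eqref{eq:geoglin} to the augmentation $\varepsilon_1$ (whose $t^{n-1}$-symmetric part contributes an \emph{even} amount at $t=-1$ when $n$ is odd, and whose image part $q$ satisfies the same symmetric rank relation) shows $E\equiv\tfrac12\sum_k h_k\pmod 2$. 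Therefore $p(-1)=E-q(-1)$ is even, as required.

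The main obstacle is the symmetric rank relation in the bilinearized setting. Unlike the linearized case, the sequence~\eqref{eq:bilindualityseq} mixes the orderings $(\varepsilon_1,\varepsilon_2)$ and $(\varepsilon_2,\varepsilon_1)$, so one must verify that the adjointness of $\tau_k$ and $\sigma_k$ genuinely survives this swap---exactly the point flagged in the proof of Theorem~\ref{thm:A}, namely that the extra negative punctures on corresponding disks carry the same augmentations. A secondary delicate point is the parity bookkeeping for $n$ odd, where the Euler characteristic identity $(1+(-1)^n)E=\chi(\Lambda)$ degenerates to $\chi(\Lambda)=0$ and is therefore vacuous; there one must instead import the parity of $E$ from the linearized duality exact sequence, using that the augmentation $\varepsilon_1$ exists.
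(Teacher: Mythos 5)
Your argument is correct and follows the paper's proof almost step for step: the same decomposition of $P$ into the image part $q$ and kernel part $p$ of the maps $\tau_k$, the same use of Proposition~\ref{prop:tau} and Theorem~\ref{thm:A} for condition (i), and the same symmetric rank relation \eqref{eq:decompH} coming from adjointness of $\tau$ and $\sigma$; your treatment of the case $n$ odd (comparing $q(-1)$ and $P(-1)$ modulo $2$ via the relation $q(1)=\tfrac12\sum_k h_k$ and the linearized duality formula \eqref{eq:geoglin}, whose symmetric part contributes evenly at $t=-1$) is exactly the paper's. The one genuine divergence is the case $n$ even: the paper obtains $P_{\Lambda,\varepsilon_1,\varepsilon_2}(-1)=\tfrac12\chi(\Lambda)$ by combining the two Thurston--Bennequin formulas of Ekholm--Etnyre--Sullivan, $tb(\Lambda)=(-1)^{(n-1)(n-2)/2}P(-1)$ and $tb(\Lambda)=(-1)^{n/2+1}\tfrac12\chi(\Lambda)$, whereas you derive the identity $\bigl(1+(-1)^n\bigr)P(-1)=\chi(\Lambda)$ directly by taking the alternating sum of dimensions along the exact sequence \eqref{eq:bilindualityseq} and using that $P_{\Lambda,\varepsilon_2,\varepsilon_1}(-1)=P_{\Lambda,\varepsilon_1,\varepsilon_2}(-1)$ is the graded count of Reeb chords. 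Your route is more self-contained (it needs no external input beyond the duality sequence itself), at the cost of having to track the regrading $k\mapsto n-k-1$ and the swap of augmentations in the cohomological terms; the paper's route outsources that bookkeeping to the cited $tb$ computations. Both give the same conclusion, and you correctly identify the two delicate points: that adjointness of $\tau_k$ and $\sigma_k$ must be checked to survive the exchange of augmentation orderings, and that the Euler-characteristic identity degenerates for $n$ odd, forcing the detour through the linearized geography formula.
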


\begin{proof}
Considering the map $\tau_k$ from the duality exact sequence~\eqref{eq:bilindualityseq}, we 
have the relation $\dim_{\Z_2} LCH^{\varepsilon_1, \varepsilon_2}_k(\Lambda) = \dim_{\Z_2} \ker \tau_k + 
\dim_{\Z_2} \textrm{im } \tau_k$. Let $p$ and $q$ be the Poincar\'e polynomials constructed using the terms
in the right hand side of this relation: $p(t) = \sum_{k \in \Z} \dim_{\Z_2} \ker \tau_k \, t^k$ and
$q(t) = \sum_{k \in \Z} \dim_{\Z_2} \textrm{im } \tau_k \, t^k$. This provides the desired splitting
$P_{\Lambda, \varepsilon_1, \varepsilon_2} = q + p$.

Let us prove (i).
Since $\textrm{im } \tau_k \subset H_k(\Lambda)$, $q$ is a polynomial of degree at most $n$. By Proposition~\ref{prop:tau}, since
$\varepsilon_1 \not\sim \varepsilon_2$, $\textrm{im } \tau_0 \neq 0$. But $H_0(\Lambda) = \Z_2$ as $\Lambda$ is connected, so
that $q(0)=1$. On the other hand, by Theorem~\ref{thm:A}, since $\varepsilon_1 \not\sim \varepsilon_2$, $\tau_n=0$ so that the 
term of degree $n$ in $q$ vanishes and $q$ is a polynomial of degree at most $n-1$.

Let us now prove (ii). Assume first that $n$ is odd. 
Since the generators of the chain complex $C(\Lambda)$ do not depend on the augmentations, 
the Euler characteristic $P_{\Lambda, \varepsilon_1, \varepsilon_2}(-1)$ does not depend on 
the augmentations either. Equation~\eqref{eq:geoglin}
then implies that $P_{\Lambda, \varepsilon_1, \varepsilon_2}(-1)$ has the same parity as
$\frac12 \sum_{k \in \Z} \dim_{\Z_2} H_k(\Lambda)$, since $(-1)^{n-1} =1$ when $n$ is odd. 
If $n=1$, then condition (i) sets $q(t)=1$ so that $q(-1)=1$ while 
$\frac12 \sum_{k \in \Z} \dim_{\Z_2} H_k(\Lambda) = 1$. By subtraction, we deduce that $p(-1)$
must be even. Note that if $n \ge 3$, this does not impose any condition on $p(-1)$ since $q(-1)$ 
can take arbitrary integer values.

Assume now that $n$ is even. 
By~\cite[Proposition~3.3]{EES2}, the Thurston-Bennequin invariant of $\Lambda$ is given by $tb(\Lambda) = 
(-1)^{\frac{(n-1)(n-2)}2} P_{\Lambda, \varepsilon_1, \varepsilon_2}(-1)$. On the other hand, by~\cite[Proposition~3.2]{EES2}, 
$tb(\Lambda) = (-1)^{\frac{n}2 +1} \frac12 \mathcal{X}(\Lambda)$ when $n$ is even. Hence 
$P_{\Lambda, \varepsilon_1, \varepsilon_2}(-1) = \frac12 \mathcal{X}(\Lambda)$. When $n=2$, 
$\frac12 \mathcal{X}(\Lambda) = \frac12 (1 - \dim_{\Z_2} H_1(\Lambda) + 1) \le \frac12 (1 + q(-1))$. 
By subtraction, we get that $p(-1) \le \frac12(1 - q(-1))$.
Note that if $n \ge 4$, this does not impose any condition on $p(-1)$ since $\frac12 \mathcal{X}(\Lambda)$
can take arbitrary integer values.
\end{proof}

\begin{Rem}  \label{rmk:admsphere}
If we restrict ourselves to Legendrian spheres $\Lambda$, the Laurent polynomials $P= q+ p$ that can arise as
the Poincar\'e polynomial of bilinearized LCH can also be characterized. 
More precisely, revisiting the proof of Proposition~\ref{prop:bLCHadm} shows that in this case the 
polynomials $q$ and $p$ satisfy the more restrictive conditions
\begin{enumerate}
\item[(i')] $q(t) = 1$,
\item[(ii')] $p(-1)$ is even if $n$ is odd and $p(-1) = 0$ if $n$ is even.
\end{enumerate}
\end{Rem}

The duality exact sequence imposes less restrictions on $LCH^{\varepsilon_1, \varepsilon_2}(\Lambda)$ than in the case of 
linearized LCH because it mainly relates this invariant to $LCH^{\varepsilon_2, \varepsilon_1}(\Lambda)$ with exchanged 
augmentations. This fact, however, means that one of these invariants determines the other one. In order to formulate this
more precisely, let us consider the duality exact sequence obtained from~\eqref{eq:bilindualityseq} after reversing the ordering
of the augmentations:
\begin{equation} \label{eq:bilindualityseq2}
\ldots \to LCH^{n-k-1}_{\varepsilon_1, \varepsilon_2}(\Lambda) \to LCH^{\varepsilon_2,\varepsilon_1}_k(\Lambda) 
\stackrel{\tilde{\tau}_k}{\longrightarrow} H_k(\Lambda) \stackrel{\tilde{\sigma}_{n-k}}{\longrightarrow} 
LCH^{n-k}_{\varepsilon_1, \varepsilon_2}(\Lambda) \to \ldots
\end{equation}
In the next Proposition, we denote by $P_{\Lambda}(t)$ the Poincar\'e polynomial for the singular homology of $\Lambda$
with coefficients in $\Z_2$.

\begin{Prop}  \label{prop:relpoly}
Let $\varepsilon_1, \varepsilon_2$ be non DGA homotopic augmentations of the Chekanov-Eliashberg DGA $(\mathcal{A}, \partial)$ 
of a closed, connected $n$-dimensional Legendrian submanifold $\Lambda$ with vanishing Maslov class in $(J^1(M), \xi)$. 
If $P_{\Lambda, \varepsilon_1, \varepsilon_2}$ decomposes as $q + p$ as in Definition~\ref{def:bLCHadm}, then  
$P_{\Lambda, \varepsilon_2, \varepsilon_1}$ decomposes as $\tilde{q} + \tilde{p}$ 
with $\tilde{q}(t) = P_{\Lambda}(t) - t^n q(t^{-1})$ and $\tilde{p}(t) =  t^{n-1} p(t^{-1})$.
\end{Prop}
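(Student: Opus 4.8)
The plan is to read off $P_{\Lambda, \varepsilon_2, \varepsilon_1}$ directly from the duality long exact sequence~\eqref{eq:bilindualityseq} for the pair $(\varepsilon_1, \varepsilon_2)$, exploiting the fact that the cohomology groups $LCH^{\ast}_{\varepsilon_2, \varepsilon_1}(\Lambda)$ of the \emph{reversed} pair already occur in that sequence. Throughout I take $p$ and $q$ to be the specific Laurent polynomials produced in the proof of Proposition~\ref{prop:bLCHadm}, namely $q(t) = \sum_k \dim_{\Z_2} \textrm{im}\,\tau_k \, t^k$ and $p(t) = \sum_k \dim_{\Z_2} \ker \tau_k \, t^k$, where $\tau_k : LCH^{\varepsilon_1, \varepsilon_2}_k(\Lambda) \to H_k(\Lambda)$. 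Since everything is over the field $\Z_2$, the cohomological and homological bilinearized groups have equal dimension in each degree, so $\dim_{\Z_2} LCH^{j}_{\varepsilon_2, \varepsilon_1}(\Lambda) = \dim_{\Z_2} LCH_j^{\varepsilon_2, \varepsilon_1}(\Lambda)$; it therefore suffices to compute the left-hand dimensions from the exact sequence and reassemble them into $P_{\Lambda, \varepsilon_2, \varepsilon_1}$.

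The key step is to isolate the group $LCH^{n-k}_{\varepsilon_2, \varepsilon_1}(\Lambda)$ inside~\eqref{eq:bilindualityseq}. It sits between the map $\sigma_{n-k}$ coming from $H_k(\Lambda)$ and the connecting map landing in $LCH^{\varepsilon_1, \varepsilon_2}_{k-1}(\Lambda)$; exactness at the latter node identifies the image of this connecting map with $\ker \tau_{k-1}$, while exactness at $LCH^{n-k}_{\varepsilon_2, \varepsilon_1}(\Lambda)$ identifies its kernel with $\textrm{im}\,\sigma_{n-k}$. Combined with the adjointness $\dim_{\Z_2} \textrm{im}\,\sigma_{n-k} = \dim_{\Z_2} \textrm{im}\,\tau_{n-k}$ already used for~\eqref{eq:decompH}, this yields
\[
\dim_{\Z_2} LCH_{n-k}^{\varepsilon_2, \varepsilon_1}(\Lambda)
= \dim_{\Z_2} \textrm{im}\,\sigma_{n-k} + \dim_{\Z_2} \ker \tau_{k-1}
= \dim_{\Z_2} \textrm{im}\,\tau_{n-k} + \dim_{\Z_2} \ker \tau_{k-1}.
\]

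It then remains to package this into Poincar\'e polynomials. Writing $m = n-k$, the identity reads $\dim_{\Z_2} LCH_m^{\varepsilon_2, \varepsilon_1}(\Lambda) = \dim_{\Z_2} \textrm{im}\,\tau_m + \dim_{\Z_2} \ker \tau_{n-1-m}$; the first summand assembles over $m$ into $q(t)$, while the second, after the index substitution $j = n-1-m$, assembles into $t^{n-1} p(t^{-1})$. Summing over $m$ gives $P_{\Lambda, \varepsilon_2, \varepsilon_1}(t) = q(t) + t^{n-1} p(t^{-1})$, as claimed. The argument is pure linear algebra once the adjointness behind~\eqref{eq:decompH} is granted, so there is no analytic difficulty; the one point that demands care is the index bookkeeping in the exact sequence, in particular correctly locating $LCH^{n-k}_{\varepsilon_2, \varepsilon_1}(\Lambda)$ between $\sigma_{n-k}$ and the connecting map into degree $k-1$, and verifying that the resulting shift by $n-1$ (not $n$) is exactly what converts the kernel part $p(t)$ into its reflection $t^{n-1} p(t^{-1})$. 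As a consistency check I would note that this matches the augmentation-independence of the Euler characteristic: evaluating at $t=-1$ forces $p(-1)=0$ when $n$ is even, which is precisely the regime in which $t^{n-1} p(t^{-1})$ and $p(t)$ could otherwise disagree at $t=-1$.
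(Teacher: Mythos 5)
Your argument is correct and follows essentially the same route as the paper: both extract the dimensions of the reversed-pair groups from the duality exact sequence~\eqref{eq:bilindualityseq} at the nodes around $LCH^{n-k}_{\varepsilon_2,\varepsilon_1}(\Lambda)$, using the adjointness $\dim_{\Z_2}\operatorname{im}\sigma_{n-k}=\dim_{\Z_2}\operatorname{im}\tau_{n-k}$ together with~\eqref{eq:decompH}, and your index bookkeeping (the shift by $n-1$ coming from the connecting map into degree $k-1$) matches the paper's identification $\tilde{p}(t)=t^{n-1}p(t^{-1})$, $\tilde{q}=q$. The only cosmetic difference is that the paper first decomposes $P_{\Lambda,\varepsilon_2,\varepsilon_1}=\tilde{q}+\tilde{p}$ and identifies each piece separately, whereas you compute the total dimension in each degree in one identity; the content is the same.
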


\begin{proof}
Let us decompose $P_{\Lambda, \varepsilon_2, \varepsilon_1}(t) = \tilde{q}(t) + \tilde{p}(t)$ as in Definition~\ref{def:bLCHadm}.
The polynomial $p$ was defined as $p(t) = \sum_{k \in \Z} \dim_{\Z_2} \ker \tau_k \, t^k$ in the proof of 
Proposition~\ref{prop:bLCHadm}.
But $\ker \tau_k$ is the image of the map $LCH_{\varepsilon_2, \varepsilon_1}^{n-k-1}(\Lambda) \to 
LCH^{\varepsilon_1, \varepsilon_2}_k(\Lambda)$, which is isomorphic to a supplementary subspace 
of $\textrm{im } \sigma_{n-k-1}$ in $LCH_{\varepsilon_2, \varepsilon_1}^{n-k-1}(\Lambda)$. Since $\sigma_{n-k-1}$ 
is the adjoint in the sense of~\cite[Proposition~3.9]{EESdual} of the map 
$\tilde{\tau}_{n-k-1} : LCH^{\varepsilon_2, \varepsilon_1}_{n-k-1}(\Lambda) \to H_{n-k-1}(\Lambda)$, the spaces
$\ker \tau_k$ and $\ker \tilde{\tau}_{n-k-1}$ are isomorphic. Therefore, the polynomial $\tilde{p}$ is given by
$\tilde{p}(t) = \sum_{k \in \Z} \dim_{\Z_2} \ker \tau_k \, t^{n-k-1} = t^{n-1} p(t^{-1})$.

On the other hand, we have $\tilde{q}(t) = \sum_{k \in \Z} \dim_{\Z_2} \textrm{im }\tilde{\tau}_k \, t^k$ as in the proof of 
Proposition~\ref{prop:bLCHadm}. But $\textrm{im }\tilde{\tau}_k = \ker\tilde{\sigma}_{n-k}$ and since $\tau{\sigma}_{n-k}$ 
is the adjoint in the sense of~\cite[Proposition~3.9]{EESdual} of the map $\tau_{n-k}$, we have that $\ker\tilde{\sigma}_{n-k}$
is isomorphic to a supplementary subspace of $\textrm{im }\tau_{n-k}$ in $H_{n-k}(\Lambda)$.
Hence, this means that 
$$
\tilde{q}(t) = \sum_{k \in \Z} \left( \dim_{\Z_2} H_{n-k}(\lambda) -  \dim_{\Z_2} \textrm{im }\tau_{n-k} \right) \, t^k 
= P_{\Lambda}(t) - t^n q(t^{-1})
$$
as announced.  
\end{proof}

Note that, since the data of $P_{\Lambda, \varepsilon_1, \varepsilon_2}$ and $P_{\Lambda, \varepsilon_2, \varepsilon_1}$
determine $P_{\Lambda}$, the question of finding $\Lambda$, $\varepsilon_1$ and $\varepsilon_2$ with prescribed polynomials
$P_{\Lambda, \varepsilon_1, \varepsilon_2}$ and $P_{\Lambda, \varepsilon_2, \varepsilon_1}$ is more complicated than our
geography question. We will not address this more complicated question.

%
%
\subsection{Motivating example}

We now describe a fundamental example in view of the construction of Legendrian submanifolds and augmentations realizing
bLCH-admissible polynomials.

\begin{Ex}  \label{ex:trefoil}
With $n=1$, consider the right handed trefoil knot $\Lambda$ with maximal Thurston-Bennequin invariant, depicted in its front 
projection in Figure~\ref{fig:trefoil}. The same Legendrian knot was already studied in Section 5.1 of~\cite{BC}. We consider it
this time in the front projection, after applying Ng's resolution procedure~\cite{N}.

\begin{figure}
\labellist
\small\hair 2pt
\pinlabel {$b_1$} [bl] at 28 44
\pinlabel {$b_2$} [bl] at 78 44
\pinlabel {$b_3$} [bl] at 133 44
\pinlabel {$a_2$} [bl] at 200 32
\pinlabel {$a_1$} [bl] at 200 56
\endlabellist
  \centerline{\includegraphics[width=6cm, height=3.5cm]{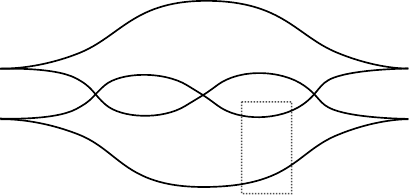}}
  \caption{Front projection of the maximal $tb$ right handed trefoil.}
  \label{fig:trefoil}
\end{figure}

The Chekanov-Eliashberg DGA has 5 generators: $a_1$ and $a_2$ correspond to right cusps and have grading $1$, while
$b_1, b_2$ and $b_3$ correspond to crossings and have grading $0$. The differential is given by
\begin{eqnarray*}
\partial a_1 &=& 1 + b_1 + b_3 + b_1 b_2 b_3, \\
\partial a_2 &=& 1 + b_1 + b_3 + b_3 b_2 b_1.
\end{eqnarray*}
This DGA admits 5 augmentations $\varepsilon_1, \ldots, \varepsilon_5$ given by 
\begin{center}
\begin{tabular}{c|ccc}
& $b_1$ & $b_2$ & $b_3$ \\
\hline
$\varepsilon_1$ & 1 & 1 & 1 \\
$\varepsilon_2$ & 1 & 0 & 0 \\
$\varepsilon_3$ & 1 & 1 & 0 \\
$\varepsilon_4$ & 0 & 0 & 1 \\
$\varepsilon_5$ & 0 & 1 & 1 \\
\end{tabular}
\end{center}

A straightforward calculation shows that $P_{\Lambda, \varepsilon_i, \varepsilon_j}(t) = 1$ for all $i \neq j$. 
In view of Definition~\ref{def:bLCHadm} and Proposition~\ref{prop:bLCHadm}, this is the simplest Poincar\'e polynomial 
that can be obtained using bilinearized LCH.

In order to produce other terms in this Poincar\'e polynomial, let us replace the portion of $\Lambda$ contained in the
dotted rectangle in Figure~\ref{fig:trefoil} by the fragment represented in Figure~\ref{fig:tangle}. This produces a Legendrian
link $\Lambda'$.

\begin{figure}
\labellist
\small\hair 2pt
\pinlabel {$c_1$} [bl] at 48 68
\pinlabel {$c_2$} [bl] at 58 118
\pinlabel {$d_1$} [bl] at 128 68
\pinlabel {$d_2$} [bl] at 118 118
\pinlabel {$a_3$} [bl] at 163 33
\endlabellist
  \centerline{\includegraphics[width=6cm, height=3.5cm]{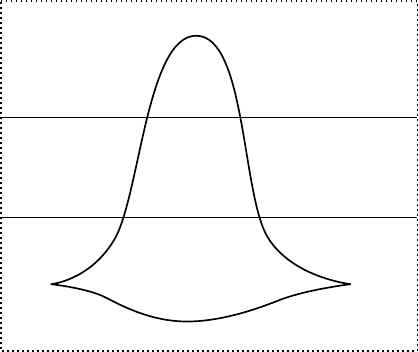}}
  \caption{Replacement for the dotted rectangle in Figure~\ref{fig:trefoil}.}
  \label{fig:tangle}
\end{figure}

The additional generator $a_3$ corresponds to a right cusp and has grading $1$. The 4 mixed chords between
the unknot and the trefoil have a grading that depends on a shift $k \in \Z$ between the Maslov potentials of the
trefoil and of the unknot. These gradings are given by
$$
|c_1| = k-1, \qquad |c_2| = k, \qquad |d_1|=1-k, \qquad |d_2| = -k.
$$
The augmentations $\varepsilon_1, \ldots, \varepsilon_5$ can be extended to this enlarged DGA by sending all new generators 
to $0$. The bilinearized differential of the original generators is therefore unchanged. The differential of the new generators is, 
on the other hand, given by
$$
\partial c_1 = 0, \
\partial c_2 = (1 + b_2 b_1) c_1, \
\partial d_1 = d_2 (1 + b_2 b_1), \
\partial d_2 = 0, \
\partial a_3 = d_1 c_1 + d_2 c_2.
$$
If we choose $\varepsilon_L = \varepsilon_1$ or $\varepsilon_3$ and $\varepsilon_R = \varepsilon_2, \varepsilon_4$ or 
$\varepsilon_5$, then the bilinearized differential is
$$
\partial^{\varepsilon_L, \varepsilon_R} c_1 = 0, \
\partial^{\varepsilon_L, \varepsilon_R} c_2 = 0, \
\partial^{\varepsilon_L, \varepsilon_R} d_1 = d_2, \
\partial^{\varepsilon_L, \varepsilon_R} d_2 = 0, \
\partial^{\varepsilon_L, \varepsilon_R} a_3 = 0.
$$
The Poincar\'e polynomial of the resulting homology is therefore $P_{\Lambda',\varepsilon_L, \varepsilon_R}(t)=t^k + t^{k-1} + t + 1$.
We now perform a connected sum between the right cusps corresponding to $a_2$ and $a_3$ in order to obtain the connected 
Legendrian submanifold $\Lambda''$ represented by Figure~\ref{fig:trefoilnote}. A Legendrian isotopy involving a number of first
Reidemeister moves is performed before the connected sum in order to ensure that the Maslov potentials agree on the cusps to
be merged. This connected sum induces a Lagrangian cobordism $L$ from $\Lambda''$ to $\Lambda'$, and we can use this 
cobordism to pullback the augmentations $\varepsilon_L$ and $\varepsilon_R$ to the Chekanov-Eliashberg DGA of $\Lambda''$.

\begin{figure}
\labellist
\small\hair 2pt
\pinlabel {$\left. \vcenter{\vspace{15mm}} \right\} 1-k$} [bl] at 220 2
\endlabellist
  \centerline{\includegraphics[width=6cm, height=3.5cm]{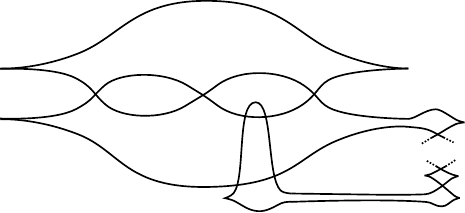}}
  \caption{Front projection of the Legendrian knot $\Lambda''$.}
  \label{fig:trefoilnote}
\end{figure}

By Proposition~\ref{prop:connsum}, since $[a_3] \in LCH_1^{\varepsilon_L, \varepsilon_R}(\Lambda')$ corresponds to the fundamental class 
of the Legendrian unknot depicted in Figure~\ref{fig:trefoilnote}, 
we obtain the Poincar\'e polynomial $P_{\Lambda'',\varepsilon_L, \varepsilon_R}(t)=t^k + t^{k-1} + 1$. 
This corresponds to $q(t) = 1$ and $p(t) = t^k + t^{k-1}$ in Definition~\ref{def:bLCHadm}. 
\end{Ex}

%
%
\subsection{A family of Legendrian spheres with a basic bLCH Poincar\'e polynomial}

In order to generalize Example~\ref{ex:trefoil} to higher dimensions, let us consider the standard Legendrian Hopf
link, or in other words the 2-copy of the standard Legendrian unknot $\Lambda^{(2)}  \subset J^1(\R^n)$. This will lead to a 
generalization of the trefoil knot from Figure~\ref{fig:trefoil}, since the latter can be obtained from the standard Legendrian Hopf 
link in $\R^3$ via a connected sum. Let us denote by $\ell$ the length of the unique Reeb chord of the standard Legendrian unknot
and by $\varepsilon$ the positive shift (much smaller than $\ell$) in the Reeb direction between the two components $\Lambda_1$ and 
$\Lambda_2$ of $\Lambda^{(2)}$. 
We assume that the top component is perturbed by a Morse function of amplitude $\delta$ much smaller than $\varepsilon$ with
exactly one maximum $M$ and one minimum $m$. In particular, among the continuum of Reeb chords of length $\varepsilon$ between 
the two components, only two chords corresponding to these extrema persist after perturbation. We also assume that thanks to this
perturbation, all Reeb chords of $\Lambda^{(2)}$ lie above distinct points of $\R^n$. In order to define the grading of mixed Reeb chords
in this link, we choose the Maslov potential of the upper component $\Lambda_2$ to be given by the Maslov potential of the lower 
component $\Lambda_1$ plus $k$.

\begin{Prop}  \label{prop:hopf}
The Chekanov-Eliashberg DGA of $\Lambda^{(2)} \subset J^1(\R^n)$ has the following 6 generators
\begin{center}
\begin{tabular}{c|cc}
& grading & length \\
\hline
$c_{11}$ & $n$ & $\ell$ \\
$c_{22}$ & $n$ & $\ell$ \\
$c_{12}$ & $n+k$ & $\ell+\varepsilon$ \\
$c_{21}$ & $n-k$ & $\ell-\varepsilon$ \\
$m_{12}$ & $k-1$ & $\varepsilon-\delta$ \\
$M_{12}$  & $n+k-1$ & $\varepsilon+\delta$
\end{tabular}
\end{center}
and its differential is given by
\begin{eqnarray*}
\partial c_{12} &=& M_{12} + m_{12} c_{11} + c_{22} m_{12}, \\
\partial c_{11} &=& c_{21} m_{12}, \\
\partial c_{22} &=& m_{12} c_{21},
\end{eqnarray*}
and $\partial M_{12} = \partial m_{12} = \partial c_{21} = 0$.
\end{Prop}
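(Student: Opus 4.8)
The plan is to read the six generators and their bidegrees off the geometry, and then to pin down the differential by combining three constraints---action (length), grading (Fredholm index), and the component ``link grading'' of the $2$-copy---which together leave only the listed terms as candidates.

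First I would identify the generators. The standard Legendrian unknot $\Lambda\subset J^1(\R^n)$ is an $n$-sphere carrying a single Reeb chord $c$ of length $\ell$ and grading $|c|=n$; doubling it yields the four chords $c_{11},c_{22},c_{12},c_{21}$, indexed by the ordered pair of components meeting their lower and upper endpoints. The self-chords keep grading $n$ and length $\ell$, while the mixed chords acquire the Maslov shift $\pm k$ and the Reeb shift $\pm\varepsilon$, giving $|c_{12}|=n+k$ of length $\ell+\varepsilon$ and $|c_{21}|=n-k$ of length $\ell-\varepsilon$. The short chords come from the Morse perturbation: the length-$\varepsilon$ chords between $\Lambda_1$ and $\Lambda_2$ form a Morse--Bott family diffeomorphic to $\Lambda\cong S^n$, and perturbing by the height function (one minimum $m$, one maximum $M$) leaves exactly $m_{12}$ and $M_{12}$, both of type $(1,2)$ because $\varepsilon\gg\delta$ forces every short chord to point from the lower copy to the upper one. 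Their grading is $\mathrm{ind}(p)+k-1$, so $|m_{12}|=k-1$ and $|M_{12}|=n+k-1$, of lengths $\varepsilon-\delta$ and $\varepsilon+\delta$. Since the unknot has no short self-chord and the perturbation is tiny, these are all the generators.

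I would then constrain the differential. A rigid disk contributing to $\partial a$ must satisfy $\ell(a)>\sum_i\ell(b_i)$ (positive area), $|a|=1+\sum_i|b_i|$ (rigidity), and a composability condition: the component labels of $a$ and of the word $b_1\cdots b_r$ must chain up, since a disk boundary on $\Lambda_1\sqcup\Lambda_2$ can only switch components at a mixed chord. Composability is decisive. For $c_{21}$ of type $(2,1)$ no word built from the only short chords $m_{12},M_{12}$ (both of type $(1,2)$) composes to $(2,1)$, and $c_{21}$ is mixed so has no capping disk; hence $\partial c_{21}=0$. For $c_{12}$, the longest chord, the composable words of type $(1,2)$ with the correct length and grading are exactly $M_{12}$, $m_{12}c_{11}$ and $c_{22}m_{12}$: a word such as $m_{12}c_{21}m_{12}$ composes and is short enough but is excluded by grading, and longer words by action. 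The same bookkeeping leaves $c_{21}m_{12}$ and $m_{12}c_{21}$ as the only candidates for $\partial c_{11}$ and $\partial c_{22}$ (the cyclic ordering distinguishing the two; the companion words with $M_{12}$ are too long), and forces $\partial m_{12}$ and $\partial M_{12}$ to have no non-constant term, as $m_{12},M_{12}$ are the two shortest chords and the mod-$2$ Morse differential of the height function on $S^n$ vanishes.

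It then remains to realise each surviving word by an odd number of rigid disks and to exclude constant terms; this disk count is the main obstacle. I would carry it out by degenerating $\Lambda^{(2)}$ to its Morse--Bott model and using the correspondence between rigid $J$-holomorphic disks and rigid flow trees of the sheet-difference function of the flying saucer, with a boundary marked point constrained to the critical locus of $f$. In these terms the three words of $\partial c_{12}$ and the single words of $\partial c_{11},\partial c_{22}$ all arise from the flow trees of the flying saucer, cut down to rigid configurations by the marked-point constraint and forced to develop a corner at $m_{12}$, $M_{12}$ or $c_{21}$ wherever the boundary passes from one copy to the other; each count is $1$ mod $2$, and since no boundary closes up on a single sheet there are no constant terms. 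As a consistency check I would verify $\partial^2=0$ directly: the only possible obstruction is the coefficient of $m_{12}c_{21}m_{12}$ in $\partial^2c_{12}$, whose two contributions (through $\partial c_{11}$ and through $\partial c_{22}$) cancel mod $2$, corroborating the count.
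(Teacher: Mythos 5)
Your argument follows the paper's proof almost step for step: the generators are read off the sheet structure and the Morse--Bott perturbation of the length-$\varepsilon$ family, the candidate monomials are cut down by the same three constraints (length, grading, and the quiver/composability condition on component labels), and the surviving words are realized by Morse flow trees; your $\partial^2=0$ check on the coefficient of $m_{12}c_{21}m_{12}$ is a nice addition not in the paper. The one step whose justification would fail is the exclusion of constant terms in the case $n=1$: the self-chord $c_{11}$ (resp.\ $c_{22}$) of a standard Legendrian unknot in $J^1(\R)$ \emph{does} bound two rigid disks with no negative punctures, so ``no boundary closes up on a single sheet'' is false there (and likewise the empty word is not excluded by grading, since $|c_{11}|-1=0$). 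The paper's proof handles this by observing that the two disks cancel mod $2$, just as the two gradient trajectories on $S^1$ cancel in $\partial M_{12}$ --- a case your appeal to the vanishing of the Morse differential of the height function does cover. With that one repair for $n=1$, your proof coincides with the paper's.
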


\begin{proof}
The front projection of each component in $\Lambda^{(2)}$ consists of two sheets, having parallel tangent hyperplanes above a single point
of $\R^n$ before the perturbation by the Morse function. The number of Reeb chords above that point is the number of pairs of sheets, 
which is $\frac{4(4-1)}2 = 6$. The chords between the two highest or the two lowest sheets belong to a continuum of chords of length $\varepsilon$
between the two components, which is replaced by two chords $M_{12}$ for the maximum $M$ and $m_{12}$ for the minimum $m$ after the 
perturbation by the Morse function. Their lengths are therefore $\varepsilon \pm \delta$. Their gradings are given by the Morse index of the 
corresponding critical point plus the difference of Maslov potentials minus one, so that we obtain $n+k-1$ and $k-1$.

The four other chords will be denoted by $c_{ij}$, where $i$ numbers the component of origin for the chord and $j$ numbers the component 
of the endpoint of the chord. Each of these chords corresponds to a maximum of the local difference function between the heights of the sheets
it joins. We therefore obtain the announced gradings and lengths.

The link $\Lambda^{(2)}$ and its Reeb chords determine a quiver represented in Figure~\ref{fig:quiver}, in which each component of the link 
corresponds to a vertex and each Reeb chord corresponds to an oriented edge. When computing the differential of a generator, the terms to be
considered correspond to paths formed by a sequence of edges in this quiver with the same origin and endpoint as the generator, with total grading
one less than the grading of the generator and with total length strictly smaller than the length of the generator.

\begin{figure}
\labellist
\small\hair 2pt
\pinlabel {$c_{11}$} [bl] at -18 10
\pinlabel {$c_{22}$} [bl] at 187 10
\pinlabel {$c_{12}$} [bl] at 87 60
\pinlabel {$M_{12}$} [bl] at 87 43
\pinlabel {$m_{12}$} [bl] at 82 28
\pinlabel {$c_{21}$} [bl] at 87 -12
\pinlabel {$\Lambda_1$} [bl] at 32 -4
\pinlabel {$\Lambda_2$} [bl] at 142 -4
\endlabellist
  \centerline{\includegraphics[width=6cm]{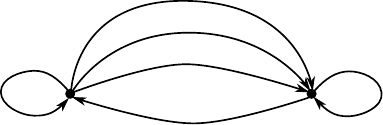}}
  \caption{Quiver corresponding to the standard Hopf link.}
  \label{fig:quiver}
\end{figure}

For $\partial c_{12}$, the only possible terms are $M_{12}$, $m_{12} c_{11}$ and $c_{22} m_{12}$. Indeed, $c_{21}$ cannot appear in such terms 
because two other chords from $\Lambda_1$ to $\Lambda_2$ would be needed as well. The resulting total length would be smaller than the length of 
$c_{12}$ only in the case of $m_{12} c_{21} m_{12}$, but this term is of grading $2$ lower than $c_{12}$. The generators $c_{11}$ and $c_{22}$ can 
appear at most once due to their length, and due to total length constraint, only $m_{12}$ can appear (only once) as a factor, leading to the possibilities 
 $m_{12} c_{11}$ and $c_{22} m_{12}$. Finally, if $M_{12}$ appears, then no other chord can appear as a factor by the previous discussion, 
 leading to the possibility $M_{12}$.

Let us show that each possible term in  $\partial c_{12}$ is realized by exactly one Morse flow tree~\cite{E}, which in turn corresponds to a 
unique holomorphic curve. To obtain $M_{12}$, we start at the chord $c_{12}$ and follow the negative gradient of the local height difference 
function, in the unique direction leading to the chord $M_{12}$. At this chord, we have a $2$-valent puncture of the Morse flow tree and we
continue by following the negative gradient of the local height difference function corresponding to one of the components $\Lambda_1$ or $\Lambda_2$
(depending on which hemisphere the maximum $M$ is located). This gradient trajectory will generically not hit any other Reeb chord and will
finally hit the cusp equator of that component, which is the end of the Morse flow tree. To obtain $m_{12} c_{11}$, we start at the chord $c_{12}$ 
and follow the negative gradient of the local height difference function, in the unique direction leading to the chord $c_{11}$. At this chord, we have 
a $2$-valent puncture of the Morse flow tree and we continue by following the negative gradient of the local height difference function corresponding to
the highest two sheets, which is the Morse function used to perturb the Hopf link. Generically, this gradient trajectory will reach the minimum $m$ so
that we obtain a $1$-valent puncture of the Morse flow tree at $m_{12}$. The term $c_{22} m_{12}$ is obtained similarly.

For $\partial c_{11}$, the only possible term is $c_{21} m_{12}$. Indeed, when $n>1$, the chord $c_{21}$ is the only one available to start an admissible path 
from $\Lambda_1$ to itself, because the empty path is not admissible. When $n=1$, the empty path is admissible but there are two holomorphic disks having $c_{11}$ as a positive puncture and no negative puncture, which cancel each other. Due to its length, the only chord we can still use is $m_{12}$ and after this, no other chord can be added.
Let us show that this possible term for $\partial c_{11}$ is realized by exactly one Morse flow tree. We start at the chord $c_{11}$ and follow 
the negative gradient of the local height difference function, in the unique direction leading to the chord $c_{21}$. At this chord, we have 
a $2$-valent puncture of the Morse flow tree and we continue by following the negative gradient of the local height difference function corresponding to
the lowest two sheets, which is the Morse function used to perturb the Hopf link. Generically, this gradient trajectory will reach the minimum $m$ so
that we obtain a $1$-valent puncture of the Morse flow tree at $m_{12}$. The calculation of $\partial c_{22}$ is analogous.

For $\partial c_{12}$, there are no possible terms because no other chord can lead from $\Lambda_1$ to $\Lambda_2$. For $\partial M_{12}$,
the only chord which is short enough to appear is $m_{12}$ but its grading $k-1$ is strictly smaller when $n>1$ than the necessary grading $n+k-2$. 
When $n=1$, there are two gradient trajectories from the maximum to the minimum of a Morse function on the circle, which cancel each other. Finally, 
$\partial m_{12}=0$ because it is the shortest chord and it joins different components.
\end{proof}

\begin{Cor} \label{cor:aughopf}
If $k=1$, the Chekanov-Eliashberg DGA of $\Lambda^{(2)} \subset J^1(\R^n)$ has two augmentations $\varepsilon_L$ and $\varepsilon_R$,
such that $\varepsilon_L(m_{12}) = 0$, $\varepsilon_R(m_{12}) = 1$ and vanishing on the other Reeb chords. When $n>1$, there are no other 
augmentations. 
\end{Cor}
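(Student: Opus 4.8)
The plan is to exploit the fact that an augmentation is a graded DGA map into $(\Z_2,0)$, which is concentrated in degree $0$; hence $\varepsilon$ must send every Reeb chord of nonzero degree to $0$. So I would first identify which generators of $\mathcal{A}(\Lambda^{(2)})$ survive in degree $0$ when $k=1$. Substituting $k=1$ into the gradings of Proposition~\ref{prop:hopf}, the chords $c_{11}, c_{22}, M_{12}$ acquire degree $n$, the chord $c_{12}$ degree $n+1$, the chord $c_{21}$ degree $n-1$, and the chord $m_{12}$ degree $0$. Thus for $n>1$ the only generator of degree $0$ is $m_{12}$, so any augmentation is forced to vanish on every other generator and is completely determined by the single value $\varepsilon(m_{12}) \in \Z_2$.

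It then remains to verify that both choices $\varepsilon(m_{12})=0$ and $\varepsilon(m_{12})=1$ actually yield augmentations, i.e. that $\varepsilon \circ \partial = 0$. Since $\partial M_{12}=\partial m_{12}=\partial c_{21}=0$, I only need to check the three relations $\partial c_{12}=M_{12}+m_{12}c_{11}+c_{22}m_{12}$, $\partial c_{11}=c_{21}m_{12}$ and $\partial c_{22}=m_{12}c_{21}$. Applying $\varepsilon$ multiplicatively and using that it kills $M_{12}, c_{11}, c_{22}, c_{21}$ (all of positive degree when $n>1$), every monomial appearing in these differentials contains at least one factor of positive degree, hence is sent to $0$. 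For instance $\varepsilon(\partial c_{12})=\varepsilon(M_{12})+\varepsilon(m_{12})\varepsilon(c_{11})+\varepsilon(c_{22})\varepsilon(m_{12})=0$ regardless of $\varepsilon(m_{12})$, and similarly for the other two relations. Consequently $\varepsilon \circ \partial = 0$ holds for both values, producing exactly the two augmentations $\varepsilon_L$ (with $\varepsilon_L(m_{12})=0$) and $\varepsilon_R$ (with $\varepsilon_R(m_{12})=1$), each vanishing on the remaining generators.

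I expect no genuine obstacle here: the result reduces to a degree count followed by a short verification that the augmentation equation is automatically satisfied. The only point requiring attention is the hypothesis $n>1$, which is precisely what guarantees that $c_{21}$ (of degree $n-1$) is not an additional degree-$0$ generator. For $n=1$ the chord $c_{21}$ also sits in degree $0$, so one would have to reexamine the augmentation equations with $\varepsilon(c_{21})$ as a free parameter, and the count of augmentations would change accordingly. This is consistent with the behavior already observed for the trefoil in Example~\ref{ex:trefoil}, where additional augmentations appear in the $n=1$ case.
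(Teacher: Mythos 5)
Your argument is correct and follows essentially the same route as the paper: a degree count showing that $m_{12}$ is the only degree-$0$ generator when $n>1$, followed by a direct check on the explicit differentials of Proposition~\ref{prop:hopf} that every monomial contains a factor of positive degree (equivalently, that neither $1$ nor a pure power of $m_{12}$ occurs in $\mathrm{im}\,\partial$), so both choices of $\varepsilon(m_{12})$ satisfy $\varepsilon\circ\partial=0$. Your closing remark about the $n=1$ case correctly identifies why that hypothesis is needed for the uniqueness claim.
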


\begin{proof}
When $n>1$, $m_{12}$ is the only generator of degree $0$, so that the maps $\varepsilon_L$ and $\varepsilon_R$ are the only two degree preserving
algebra morphisms $\mathcal{A} \to \Z_2$. In order to show that these are augmentations, we need to check that $1, m_{12} \notin 
\textrm{im } \partial$. This follows from the fact that there is no term $1$ and that $m_{12}$ always appears a a factor of another generator 
in the expression of $\partial$ in Proposition~\ref{prop:hopf}.
\end{proof}

The above augmentations $\varepsilon_L$ and $\varepsilon_R$ can be used in order to obtain a bilinearized differential
associated to the differential from Proposition~\ref{prop:hopf}. We obtain 
$\partial^{\varepsilon_L, \varepsilon_R} c_{12}  = M_{12} + c_{22}$ and 
$\partial^{\varepsilon_L, \varepsilon_R} c_{11} = c_{21}$,
while the differential of the other $4$ generators vanishes. The corresponding homology is therefore generated by $[M_{12}] = -[c_{22}]$ in degree $n$ and by $[m_{12}]$ in degree $0$. Hence, the Poincar\'e polynomial $P_{\Lambda^{(2)},\varepsilon_L, \varepsilon_R}(t)$ is given by $1+t^n$.

After this preliminary calculation, let us consider a combination of several such links in view of obtaining more general Poincar\'e 
polynomials than those in Example~\ref{ex:trefoil}. To this end, consider the $2N$-copy of the standard Legendrian unknot 
$\Lambda^{(2N)}  \subset J^1(\R^n)$ for $N \ge 1$. This link contains the components $\Lambda_1, \ldots, \Lambda_{2N}$ 
numbered from bottom to top. If $\ell$ denotes the length of the unique Reeb chord of $\Lambda_i$ and $\varepsilon$ denotes the 
positive shift between any two consecutive components, we require that $2N \varepsilon$ is much smaller than $\ell$. 
We perturb the component $\Lambda_i$ for $i=2, \ldots, 2N$ by a Morse function $f_i$ with two critical points and of amplitude $\delta$
much smaller than $\varepsilon$, such that all differences $f_i - f_j$ with $i\neq j$ are Morse functions with two critical points.
In order to define the gradings of mixed Reeb chords in this link, we choose the Maslov potential of the component $\Lambda_i$ 
to be given by the Maslov potential of the lowest component $\Lambda_1$ plus $i-1$.

A direct application of Proposition~\ref{prop:hopf} to each pair of components $\Lambda_i$ and $\Lambda_j$ shows that the chords
of $\Lambda^{(2N)}$ are given by
\begin{center}
\begin{tabular}{c|cc}
& grading & length \\
\hline
$c_{i,i}$ & $n$ & $\ell$ \\
$c_{i,j}$ & $n+j-i$ & $\ell+\varepsilon(j-i)$ \\
$c_{j,i}$ & $n-j+i$ & $\ell-\varepsilon(j-i)$ \\
$m_{i,j}$ & $j-i-1$ & $\varepsilon(j-i)-\delta$ \\
$M_{i,j}$  & $n+j-i-1$ & $\varepsilon(j-i)+\delta$
\end{tabular}
\end{center}
where the indices $i$ and $j$ take all possible values between $1$ and $2N$, such that $i<j$. 

\begin{Prop}
The algebra morphisms $\varepsilon_L$ and $\varepsilon_R$ defined by $\varepsilon_L(m_{i,i+1})=1$ when $i$ is even,
$\varepsilon_R(m_{i,i+1})=1$ when $i$ is odd and vanishing on all other chords are augmentations of the Chekanov-Eliashberg
DGA of $\Lambda^{(2N)}$.
\end{Prop}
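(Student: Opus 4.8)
The plan is to check directly that $\varepsilon_L$ and $\varepsilon_R$ satisfy the defining properties of an augmentation. Both maps are prescribed on the free generators, hence extend uniquely to unital algebra morphisms $\mathcal{A}\to\Z_2$; since the only generators on which they take a nonzero value are the chords $m_{i,i+1}$, which all have degree $0$ according to the grading table, these morphisms are automatically graded. The only nontrivial point is therefore to verify that $\varepsilon_L\circ\partial=0$ and $\varepsilon_R\circ\partial=0$. Writing $\varepsilon$ for either map and using that it is a ring homomorphism, for each generator $c$ one has $\varepsilon(\partial c)=\sum_w \varepsilon(w)$, the sum being over the words $w$ appearing in $\partial c$, with $\varepsilon(w)=1$ precisely when every letter of $w$ is one of the augmented chords. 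So I only need to control which all-augmented words can occur in a differential.

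The key combinatorial observation is that, for a fixed augmentation, the augmented chords form a set of \emph{vertex-disjoint} edges in the quiver of Figure~\ref{fig:quiver} (extended to $2N$ vertices). For $\varepsilon_L$ the augmented edges join the component pairs $\{2,3\},\{4,5\},\ldots$, while for $\varepsilon_R$ they join $\{1,2\},\{3,4\},\ldots$; in either case no two augmented edges share a component, the point being that $m_{i,i+1}$ and $m_{i+1,i+2}$ can never be augmented simultaneously since the consecutive indices $i$ and $i+1$ have opposite parities. Moreover a single augmented edge $m_{i,i+1}$ cannot be followed by itself, as its two endpoints differ. Consequently no path of length at least $2$ can consist entirely of augmented edges, so every word $w$ with $\varepsilon(w)=1$ reduces to a single augmented generator $m_{i,i+1}$; the empty word does not contribute since no differential $\partial c$ carries a constant term (for instance, when $n=1$ the two teardrops at a $c_{i,i}$ cancel modulo $2$, as in Proposition~\ref{prop:hopf}). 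Hence $\varepsilon(\partial c)$ equals, modulo $2$, the number of standalone terms $m_{i,i+1}$ occurring in $\partial c$ that are augmented by $\varepsilon$.

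Finally, a standalone term $m_{i,i+1}$ in $\partial c$ forces the single oriented edge $m_{i,i+1}\colon i\to i+1$ to have the same endpoints and orientation as $c$, and $c$ must have degree $1$ because $|m_{i,i+1}|=0$ and $\partial$ lowers degree by one. Among the chords running from $\Lambda_i$ to $\Lambda_{i+1}$, namely $c_{i,i+1}$, $M_{i,i+1}$ and $m_{i,i+1}$, of respective degrees $n+1$, $n$ and $0$, the only one of degree $1$ is $M_{i,i+1}$, and only when $n=1$. For $n\ge 2$ there is thus nothing to check and $\varepsilon\circ\partial=0$ at once. For $n=1$ I would argue exactly as for $\partial M_{12}$ in Proposition~\ref{prop:hopf}: by the length estimate $m_{i,i+1}$ is the only chord short enough to appear in $\partial M_{i,i+1}$, it is too short to reach any component beyond $\Lambda_i$ and $\Lambda_{i+1}$, and its coefficient counts modulo $2$ the two gradient trajectories of the perturbing Morse function on the circle from the maximum to the minimum, which cancel. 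Thus $\partial M_{i,i+1}$ carries no standalone $m_{i,i+1}$ term and $\varepsilon(\partial M_{i,i+1})=0$ as well. Combining these steps gives $\varepsilon_L\circ\partial=\varepsilon_R\circ\partial=0$, so both maps are augmentations. The main obstacle I anticipate is not the algebra but making the two reductions airtight for the as-yet unspelled-out differential of $\Lambda^{(2N)}$: one must be certain that vertex-disjointness of the augmented edges genuinely excludes every all-augmented word, and that the length bound confining $\partial M_{i,i+1}$ to the pair $\{\Lambda_i,\Lambda_{i+1}\}$ survives the presence of the other $2N-2$ components. Both follow from the grading and length table together with the local nature of the flow trees used in Proposition~\ref{prop:hopf}.
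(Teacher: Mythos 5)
Your proof is correct, and it follows the same overall strategy as the paper: reduce $\varepsilon\circ\partial=0$ to showing that no monomial in any $\partial c$ has all of its letters augmented, then split into the multi-letter case (handled by the quiver/parity structure) and the standalone-$m_{i,i+1}$ case. The differences are worth recording. For the standalone case, the paper argues that a single-letter term $m_{i,i+1}$ in some $\partial a$ would already have to appear in the Hopf-link differential of Proposition~\ref{prop:hopf} for the sublink $\Lambda_i\cup\Lambda_{i+1}$, where one checks it does not occur; you instead observe that such a term forces $|a|=1$ with $a$ a chord from $\Lambda_i$ to $\Lambda_{i+1}$, and the grading table leaves no candidate when $n\ge 2$ and only $M_{i,i+1}$ when $n=1$, for which the teardrop cancellation of Proposition~\ref{prop:hopf} applies. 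This is a clean shortcut that avoids invoking the explicit Hopf-link differential for $n\ge2$. For the multi-letter case, your vertex-disjointness observation (consecutive edges $m_{i,i+1}$, $m_{i+1,i+2}$ have indices of opposite parity, so no path of length $\ge 2$ is entirely augmented by a single one of $\varepsilon_L,\varepsilon_R$) is in fact \emph{necessary}: the paper's assertion that every monomial in $\mathrm{im}\,\partial$ has a factor not of the form $m_{i,i+1}$ is not literally true, since for instance $m_{2,3}m_{1,2}$ does occur in $\partial m_{1,3}$ (it is what produces the terms of $\partial^{\varepsilon_L,\varepsilon_R}m_{i,j}$ in Proposition~\ref{prop:bilinN}); what saves the day is precisely that its two factors are augmented by \emph{different} augmentations. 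So your write-up makes explicit a parity argument the paper only uses implicitly. The one point to keep flagged, as you do, is the locality statement that disks (or flow trees) asymptotic only to chords of a sublink coincide with those of the sublink itself, which both arguments rely on and which follows from the length/grading table and the local nature of the flow trees.
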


\begin{proof}
Let us to show that $m_{i,i+1} \notin \textrm{im } \partial$ for all $i=1,\ldots, 2N-1$. If $m_{i,i+1}$ was a term in $\partial a$ for some $a$
in the Chekanov-Eliashberg of $\Lambda^{(2N)}$, then $a$ would have to be a linear combination of chords from $\Lambda_i$ to $\Lambda_{i+1}$.
Indeed, $\partial c$ does not contain the term $1$ for any chord $c$ of $\Lambda^{(2N)}$, say from $\Lambda_i$ to $\Lambda_j$, because it would 
give rise to a term $1$ in Proposition~\ref{prop:hopf} for the Legendrian Hopf link composed of $\Lambda_i$ and $\Lambda_j$. Therefore, $\partial$
does not decrease the number of factors in terms it acts on. Since $a$ must be a single chord from $\Lambda_i$ to $\Lambda_{i+1}$, if there were
a  term $m_{i,i+1}$ in $\partial a$, then there would already be such a term in Proposition~\ref{prop:hopf} for the Legendrian Hopf link composed of 
$\Lambda_i$ and $\Lambda_{i+1}$. Hence $m_{i,i+1} \notin \textrm{im } \partial$ as announced. 

This implies that $\varepsilon_L$ and $\varepsilon_R$ are augmentations, because any element of $\textrm{im } \partial$ is composed of monomials
having at least one factor which is not of the form $m_{i,i+1}$, and in particular not augmented, so that $\varepsilon_L$ and $\varepsilon_R$ vanish on
$\textrm{im } \partial$.
\end{proof}

\begin{Prop}  \label{prop:bilinN}
The bilinearized differential $\partial^{\varepsilon_L,\varepsilon_R}$ of $\Lambda^{(2N)}$ is given by
\begin{eqnarray*}
\partial^{\varepsilon_L,\varepsilon_R} c_{i,i} &=& \overline{i} \ c_{i,i-1} +  \overline{i} \ c_{i+1,i}, \\
\partial^{\varepsilon_L,\varepsilon_R} c_{i,j} &=& M_{i,j} + \overline{j} \ c_{i,j-1} + \overline{i} \ c_{i+1,j}, \\
\partial^{\varepsilon_L,\varepsilon_R} c_{j,i} &=& \overline{i} \ c_{j,i-1} + \overline{j} \ c_{j+1,i}, \\
\partial^{\varepsilon_L,\varepsilon_R} m_{i,j} &=& \overline{j} \ m_{i,j-1} + \overline{i} \ m_{i+1,j}, \\
\partial^{\varepsilon_L,\varepsilon_R} M_{i,j} &=& \overline{j} \ M_{i,j-1} + \overline{i} \ M_{i+1,j}, 
\end{eqnarray*}
with $i < j$ and where $\overline{i}$ and $\overline{j}$ are the modulo $2$ reductions of $i$ and $j$.
In the above formulas, any generator with one of its indices equal to $0$ or $2N+1$ or of the form $m_{i,i}$ or $M_{i,i}$ 
should be replaced by zero.
\end{Prop}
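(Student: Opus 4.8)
The plan is to first pin down which monomials in the full Chekanov--Eliashberg differential $\partial$ of $\Lambda^{(2N)}$ can possibly survive the bilinearization by $\varepsilon_L$ and $\varepsilon_R$, and only then to read off their contributions. The decisive structural observation is that $\varepsilon_L$ and $\varepsilon_R$ are supported \emph{only} on the nearest-neighbor chords $m_{p,p+1}$, with $\varepsilon_L$ detecting the even values of $p$ and $\varepsilon_R$ the odd ones. In the defining formula for $\partial^{\varepsilon_L,\varepsilon_R}$, a monomial $b_1\cdots b_k$ occurring in $\partial a$ therefore contributes only through a splitting in which every factor of the prefix $b_1\cdots b_{i-1}$ is $\varepsilon_L$-augmented and every factor of the suffix $b_{i+1}\cdots b_k$ is $\varepsilon_R$-augmented. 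Since any two consecutive factors of such a prefix must, as a connected sub-path in the quiver of Figure~\ref{fig:quiver}, have the form $m_{p,p+1}m_{p+1,p+2}$, their lower indices $p$ and $p+1$ have opposite parity and cannot both be even; the same applies to the suffix with the odd parity. Hence the augmented prefix and suffix each contain at most one factor, and only monomials of length at most three, of the shape $X$, $m_{p,p+1}X$, $Xm_{q,q+1}$, or $m_{p,p+1}Xm_{q,q+1}$, can contribute.

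Next I would identify precisely these low-length monomials in $\partial$. Rather than computing the entire differential of the multi-copy, I would extend the Morse flow tree analysis from the proof of Proposition~\ref{prop:hopf} to the three consecutive sheets that such a monomial can involve. For each generator $a$ this produces the direct term (the $M_{i,j}$ appearing in $\partial c_{i,j}$), the descent terminating at a nearest-neighbor minimum (yielding the suffix monomials $X\,m_{j-1,j}$), and the symmetric configuration picking up the adjacent minimum on the other side (yielding the prefix monomials $m_{i,i+1}\,X$). The length and grading bookkeeping from Proposition~\ref{prop:hopf} adapts essentially verbatim. The only genuinely new point is that a chord $c_{i,j}$ may a priori also admit terms $c_{i,j'}m_{j',j}$ with $j'<j-1$; these are harmless, because $m_{j',j}$ is then a \emph{non-adjacent} mixed chord on which both augmentations vanish, so they never survive bilinearization. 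I would also exclude the length-three monomials $m_{p,p+1}Xm_{q,q+1}$: the two flanking minima have grading zero but force the central chord $X$ to span two fewer component indices, and a short computation shows its grading then falls exactly one short of $|a|-1$, so no such term occurs.

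Finally I would assemble the five formulas by evaluating the surviving augmentations. A prefix monomial $m_{i,i+1}X$ contributes $\varepsilon_L(m_{i,i+1})\,X$ and a suffix monomial $X\,m_{j-1,j}$ contributes $\varepsilon_R(m_{j-1,j})\,X$; reading these values off the definition of $\varepsilon_L$ and $\varepsilon_R$, while keeping track of the boundary orientation that dictates which augmentation acts on which side, yields precisely the parity prefactors $\overline{i}$ and $\overline{j}$ recorded in the statement. The degenerate conventions---setting to zero any generator whose index equals $0$ or $2N+1$, and any $m_{i,i}$ or $M_{i,i}$---correspond exactly to the absence of a neighboring sheet or of a genuine mixed chord, and are thus automatically respected.

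I expect the main obstacle to be the second step: confirming that the flow tree count in the $2N$-copy really does reduce, for the relevant monomials, to the three-sheet computation of Proposition~\ref{prop:hopf}, with no extra trees created by the presence of the remaining components, and simultaneously no missed configurations. This is a completeness assertion about the moduli spaces, and the care lies entirely in the uniform length and grading estimates that isolate the contributing trees and, at the same time, rule out the length-three terms.
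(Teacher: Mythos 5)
Your proposal follows essentially the same route as the paper: the paper likewise first uses the fact that $\varepsilon_L$ and $\varepsilon_R$ augment only the adjacent minima $m_{p,p+1}$ with alternating parity (so no two consecutive quiver edges are augmented by the same augmentation) to reduce to monomials with at most one augmented prefix and one augmented suffix factor, then kills the length-three terms by the same grading-deficit-of-one computation, and finally realizes each surviving term by a unique Morse flow tree via reduction to Proposition~\ref{prop:hopf} (supplemented, for $\partial m_{i,j}$ and $\partial M_{i,j}$, by the explicit trees with a $3$-valent splitting vertex that you would need to describe when carrying out your second step). The parity prefactors are then read off from the augmentation values exactly as you indicate.
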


\begin{proof}
The link $\Lambda^{(2N)}$ and its Reeb chords determine a quiver represented in Figure~\ref{fig:quiverN}, and as in the proof 
of Proposition~\ref{prop:hopf}, the terms in the differential of a chord from $\Lambda_i$ to $\Lambda_j$ must form a path from 
vertex $i$ to vertex $j$.

\begin{figure}
\labellist
\small\hair 2pt
\pinlabel {$\Lambda_1$} [bl] at 33 53
\pinlabel {$\Lambda_2$} [bl] at 141 48
\pinlabel {$\Lambda_3$} [bl] at 260 50
\pinlabel {$\Lambda_{2N-2}$} [bl] at 338 50
\pinlabel {$\Lambda_{2N-1}$} [bl] at 467 48
\pinlabel {$\Lambda_{2N}$} [bl] at 590 53
\endlabellist
  \centerline{\includegraphics[width=12cm]{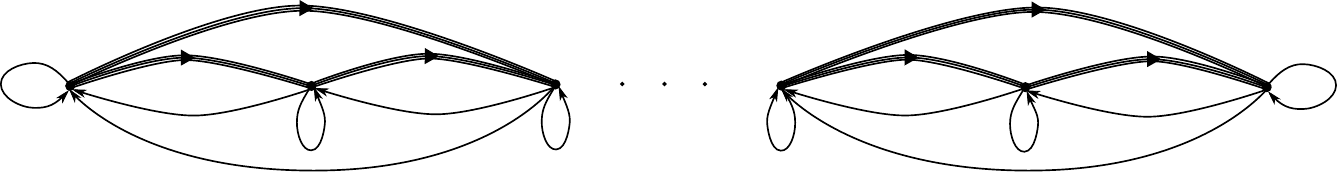}}
  \caption{Quiver corresponding to the $2N$-copy of the standard Legendrian unknot.}
  \label{fig:quiverN}
\end{figure}

Let us compute $\partial^{\varepsilon_L,\varepsilon_R} c_{i,i}$. The only possible terms in $\partial c_{i,i}$ that could lead to a nonzero contribution to
$\partial^{\varepsilon_L,\varepsilon_R} c_{i,i}$ are $c_{i+1,i} m_{i,i+1}$ and  $m_{i-1,i} c_{i,i-1}$. Indeed, there are no other chords of $\Lambda_i$ so
a change of component is needed. Since only chords of the form $m_{i,i+1}$ are augmented by $\varepsilon_L$ and $\varepsilon_R$, there must be 
exactly one chord from $\Lambda_j$ to $\Lambda_k$ with $j > k$. Moreover, since neither $\varepsilon_L$ nor $\varepsilon_R$ augment consecutive
chords in the quiver determined by $\Lambda^{(2N)}$, we must have $|j-k|=1$ and $j=i$ or $k=i$. Considering the Legendrian Hopf link composed of
$\Lambda_i$ and $\Lambda_{i+1}$, Proposition~\ref{prop:hopf} gives the term $c_{i+1,i} m_{i,i+1}$, while considering the Legendrian Hopf link composed 
of $\Lambda_{i-1}$ and $\Lambda_i$, it gives the term $m_{i-1,i} c_{i,i-1}$. With the first term, since $m_{i,i+1}$ has to be augmented by $\varepsilon_R$,
we obtain the contribution $c_{i+1,i}$ when $i$ is odd.  With the second term, since $m_{i-1,i}$ has to be augmented by $\varepsilon_L$,
we obtain the contribution $c_{i,i-1}$ when $i-1$ is even. In other words, we obtain $\partial^{\varepsilon_L,\varepsilon_R} c_{i,i}= 
\overline{i} \ c_{i,i-1} +  \overline{i} \ c_{i+1,i}$ as announced.

Let us compute $\partial^{\varepsilon_L,\varepsilon_R} c_{i,j}$ with $i<j$. All terms in $\partial c_{i,j}$ involving a single chord from $\Lambda_i$ to 
$\Lambda_j$ correspond to terms with a single factor in the expression for $\partial c_{12}$ in Proposition~\ref{prop:hopf}. We therefore obtain the
term $M_{i,j}$. The other terms must involve augmented chords; since $\varepsilon_L$ and $\varepsilon_R$ do not have consecutive augmented 
chords, these other terms could come from $m_{j-1,j} c_{i,j-1}$, $c_{i+1,j} m_{i,i+1}$, $m_{j-1,j} c_{i+1,j-1} m_{i,i+1}$ or analogous terms with $c_{k,l}$
replaced with $m_{k,l}$ or $M_{k,l}$. The latter two possibilities lead to elements with a too small grading, so that the unaugmented chord is of the type
$c_{k,l}$. The possibilities $m_{j-1,j} c_{i,j-1}$ and $c_{i+1,j} m_{i,i+1}$ are each realized by a single holomorphic disk, corresponding to the contribution
$m_{12} c_{11} + c_{22} m_{12}$ in the expression for $\partial c_{12}$ in Proposition~\ref{prop:hopf}. The remaining possibility $m_{j-1,j} c_{i+1,j-1} m_{i,i+1}$ has a too small grading. Summing up, the possibility $m_{j-1,j} c_{i,j-1}$ leads to the term $c_{i,j-1}$ when $j$ is odd and the possibility 
$c_{i+1,j} m_{i,i+1}$ leads to the term $c_{i+1,j}$ when $i$ is odd, so that we obtain $\partial^{\varepsilon_L,\varepsilon_R} c_{i,j} = M_{i,j} + 
\overline{j} \ c_{i,j-1} + \overline{i} \ c_{i+1,j}$ as announced.

The computation of $\partial^{\varepsilon_L,\varepsilon_R} c_{j,i}$ with $i<j$ is similar. Since there are no other chords from $\Lambda_i$ to 
$\Lambda_j$, the only contributions involve augmented chords and come from $m_{i-1,i} c_{j,i-1}$, $c_{j+1,i} m_{j,j+1}$ or $m_{i-1,i} c_{j+1,i-1} m_{j,j+1}$.
The last possibility has a too small grading, while the first two possibilities are each realized by a single holomorphic disk, corresponding to the 
contributions $c_{21} m_{12}$  and $m_{12} c_{21}$ in the expressions for $\partial c_{11}$ and $\partial c_{22}$ in Proposition~\ref{prop:hopf}.
The possibility $m_{i-1,i} c_{j,i-1}$ leads to the term $c_{j,i-1}$ when $i$ is odd and the possibility 
$c_{j+1,i} m_{j,j+1}$ leads to the term $c_{j+1,i}$ when $j$ is odd, so that we obtain $\partial^{\varepsilon_L,\varepsilon_R} c_{j,i} = 
\overline{i} \ c_{j,i-1} + \overline{j} \ c_{j+1,i}$ as announced.

The computation of $\partial^{\varepsilon_L,\varepsilon_R} m_{i,j}$ and $\partial^{\varepsilon_L,\varepsilon_R} M_{i,j}$ with $i<j-1$ involves only chords
of the type $m_{k,l}$ and $M_{k,l}$ since all other chords are much longer. Let us start with $\partial^{\varepsilon_L,\varepsilon_R} m_{i,j}$. Arguing as above, since $m_{i,j}$ is the shortest chord from $\Lambda_i$ to $\Lambda_j$, the only contributions involve augmented chords and come from 
$m_{i-1,i} m_{j,i-1}$, $m_{j+1,i} m_{j,j+1}$ or $m_{i-1,i} m_{j+1,i-1} m_{j,j+1}$. The last possibility has a too small grading, and the first two possibilities 
are each realized by a unique Morse flow tree~\cite{E}, which in turn corresponds to a unique holomorphic curve. Both Morse flow trees start with a
constant gradient trajectory at $m_{i,j}$, which is the minimum of the difference function $f_j-f_i$. The only possibility to leave $m_{i,j}$ is to have a 
$3$-valent vertex, corresponding to the splitting of the gradient trajectory into two gradient trajectories, for $f_j - f_k$ and for $f_k - f_i$, for some $k$
strictly between $i$ and $j$. These trajectories converge to the corresponding minima $m_{k,j}$ and to $m_{i,k}$, so we obtain the desired trees for
$k = i+1$ and $k = j-1$. Summing up, we obtain as above $\partial^{\varepsilon_L,\varepsilon_R} m_{j,i} = 
\overline{i} \ m_{j,i-1} + \overline{j} \ m_{j+1,i}$ as announced. The computation of $\partial^{\varepsilon_L,\varepsilon_R} M_{i,j}$  is completely 
analogous, except for the description of the Morse flow trees. Both Morse flow trees start with a gradient trajectory from $M_{i,j}$ to a priori any 
point of the sphere. In order to reach $M_{i+1,j}$ or $M_{i,j-1}$ it is necessary for the gradient trajectory to end exactly at the maximum of the
corresponding height difference function. There, we have a $2$-valent puncture of the Morse flow tree and we continue with a gradient trajectory
converging to the minimum $m_{i,i+1}$ or $m_{j-1,j}$. Again, we obtain $\partial^{\varepsilon_L,\varepsilon_R} M_{j,i} = 
\overline{i} \ M_{j,i-1} + \overline{j} \ M_{j+1,i}$ as announced.
\end{proof}

\begin{Prop}  \label{prop:polyN}
The Poincar\'e polynomial of $\Lambda^{(2N)}$ with respect to the augmentations $\varepsilon_L$ and  $\varepsilon_R$
is given by $P_{\Lambda^{(2N)}, \varepsilon_L, \varepsilon_R}(t) = N(1 + t^n)$.
\end{Prop}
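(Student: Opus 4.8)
The plan is to read the bilinearized complex $(C(\Lambda^{(2N)}), \partial^{\varepsilon_L,\varepsilon_R})$ directly off of Proposition~\ref{prop:bilinN} and to split it into pieces whose homology can be computed by elementary linear algebra. The first observation I would make is that no generator has an $m_{i,j}$ occurring in its differential, while the span $C^m$ of the chords $m_{i,j}$ is preserved by $\partial^{\varepsilon_L,\varepsilon_R}$ and the span $C^{cM}$ of the chords $c_{i,j}$ and $M_{i,j}$ is preserved as well. Hence the complex splits as a \emph{direct sum} of subcomplexes $C^m \oplus C^{cM}$, and it suffices to compute $H_*(C^m)$ and $H_*(C^{cM})$ separately.

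For $C^m$, whose differential is $\partial^{\varepsilon_L,\varepsilon_R} m_{i,j} = \overline{j}\, m_{i,j-1} + \overline{i}\, m_{i+1,j}$, I would run a Gaussian elimination (algebraic discrete Morse theory): pair $m_{i,j}$ with $m_{i+1,j}$ whenever $i$ is odd and $i+1 < j$, the relevant matrix entry being $\overline{i}=1$ so that the pair cancels. The only unpaired generators are the $N$ chords $m_{1,2}, m_{3,4}, \ldots, m_{2N-1,2N}$, all of degree $0$ (using the convention $m_{i,i}=0$, which truncates the matching at the boundary). To legitimize the cancellation I would check that the matching is acyclic: in the modified differential graph the second index $j$ never increases, and for fixed $j$ the reversed matching arrows strictly decrease the first index, so there are no directed cycles. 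Since all surviving generators sit in degree $0$, the resulting Morse differential vanishes, whence $H_*(C^m) = \Z_2^N$ concentrated in degree $0$.

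For $C^{cM}$, I would use the subcomplex $C^M$ spanned by the $M_{i,j}$ (indeed $\partial M_{i,j}$ involves only $M$'s), yielding a short exact sequence $0 \to C^M \to C^{cM} \to C^c \to 0$ in which the quotient $C^c$ is spanned by the $c_{a,b}$ with induced differential $\partial^c c_{a,b} = \overline{b}\, c_{a,b-1} + \overline{a}\, c_{a+1,b}$, the $M_{i,j}$ terms dying in the quotient. Writing $c_{a,b} = q_a \otimes p_b$, this is the tensor product of the complex $d^{+} q_a = \overline{a}\, q_{a+1}$ with the complex $d^{-} p_b = \overline{b}\, p_{b-1}$; since the first factor is a sum of $N$ acyclic two-term complexes $q_{2s-1}\to q_{2s}$, Künneth over $\Z_2$ shows that $C^c$ is acyclic. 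The long exact sequence then forces $H_*(C^{cM}) \cong H_*(C^M)$. Finally, the identification $M_{i,j} \mapsto m_{i,j}$ is a chain isomorphism $C^M \cong C^m$ shifting degree by $n$ (the grading of $M_{i,j}$ equals $n$ plus that of $m_{i,j}$), so $H_*(C^{cM}) = \Z_2^N$ concentrated in degree $n$.

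Combining the two computations, $LCH^{\varepsilon_L,\varepsilon_R}(\Lambda^{(2N)})$ has total dimension $N$ in degree $0$ and $N$ in degree $n$, that is, $P_{\Lambda^{(2N)}, \varepsilon_L, \varepsilon_R}(t) = N(1+t^n)$, as claimed. The main obstacle is the homology of $C^m$: the direct-sum splitting and the Künneth step are formal once noticed, but extracting $\dim H_0 = N$ together with the vanishing in all higher degrees requires the explicit acyclic matching, whose acyclicity I must verify carefully precisely because the coefficients $\overline{i}, \overline{j}$ depend on parity and the boundary convention $m_{i,i}=0$ controls exactly which generators remain critical.
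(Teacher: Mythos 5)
Your proof is correct, and it starts from the same place as the paper's: reading the complex off Proposition~\ref{prop:bilinN} and splitting off the direct summand $C^m$ spanned by the chords $m_{i,j}$. For $C^m$ the paper exhibits an explicit decomposition into small acyclic pieces (squares, triangles and edges) plus $N$ surviving degree-$0$ generators; your acyclic matching of $m_{i,j}$ with $m_{i+1,j}$ for $i$ odd is a systematic repackaging of the same cancellation, and your acyclicity check is sound precisely because the forward arrows $m_{i,j}\to m_{i+1,j}$ with $i$ even carry coefficient $\overline{i}=0$, so at fixed second index only the reversed arrows remain. Where you genuinely diverge is the $c/M$ part: the paper again lists explicit acyclic subcomplexes mixing $c$'s and $M$'s, whereas you filter by the subcomplex $C^M$, observe that the quotient $C^c$ is a tensor product $Q\otimes P$ with $Q$ a sum of $N$ acyclic two-term complexes $q_{2s-1}\to q_{2s}$, and conclude by K\"unneth (or a contracting homotopy $h\otimes \mathrm{id}$, which works over $\Z_2$) that $C^c$ is acyclic, so $H(C^{cM})\cong H(C^M)\cong H(C^m)$ shifted by $n$. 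This buys a structural explanation of why the degree-$n$ answer mirrors the degree-$0$ one, at the small cost of checking that the uniform formula $\partial^c c_{a,b}=\overline{b}\,c_{a,b-1}+\overline{a}\,c_{a+1,b}$ covers all three cases $a<b$, $a=b$, $a>b$ of Proposition~\ref{prop:bilinN} (it does) and that the grading $|c_{a,b}|=n+b-a$ is compatible with the bigrading (it is). Both routes give $N$ classes in degree $0$ and $N$ in degree $n$, consistent with the paper's representatives such as $[c_{2N,2N}]=[M_{2N-1,2N}]$.
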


\begin{proof}
We need to compute the homology of the complex described in Proposition~\ref{prop:bilinN}. 

Let us first consider the subcomplex
spanned by the chords $m_{i,j}$ with $i<j$. For any $k,l = 1, \ldots, N$ with $k < l-1$, the generators $m_{2k-1,2l-1}$, $m_{2k,2l-1}$,
$m_{2k-1,2l-2}$ and $m_{2k,2l-2}$ form an acyclic subcomplex. When $k=l-1$, we just have a subcomplex with the $3$ generators
$m_{2l-3,2l-1}, m_{2l-2,2l-1}$ and $m_{2l-3,2l-2}$, which has homology spanned by $[m_{2l-2,2l-1}]=[m_{2l-3,2l-2}]$ in degree $0$.
We therefore obtain $N-1$ such generators. For any $k = 1, \ldots, N-1$, the generators $m_{2k-1,2N}$ and $m_{2k,2N}$
form an acyclic subcomplex. Finally, the generator $m_{2N-1,2N}$ survives in homology and has degree $0$. The total contribution of
the chords $m_{i,j}$ to the polynomial $P_{\Lambda^{(2N)}, \varepsilon_L, \varepsilon_R}$ is therefore the term $N$.

Consider now the subcomplex spanned by the chords $M_{i,j}$ with $i<j$ and $c_{i,j}$ for all $i,j = 1, \ldots, 2N$. For any $k,l = 1, \ldots, N$
with $k<l-1$, the generators $c_{2k-1,2l-1}$, $c_{2k,2l-1}$, $c_{2k-1,2l-2}$, $c_{2k,2l-2}$, $M_{2k-1,2l-1}$, $M_{2k,2l-1}$, $M_{2k-1,2l-2}$ and 
$M_{2k,2l-2}$ form an acyclic subcomplex. When $k=l-1$, we just have a subcomplex with the $7$ generators
$c_{2k-1,2l-1}$, $c_{2k,2l-1}$, $c_{2k-1,2l-2}$, $c_{2k,2l-2}$, $M_{2k-1,2l-1}$, $M_{2k,2l-1}$ and $M_{2k-1,2l-2}$, which has homology spanned 
by $c_{2l-2,2l-2}$ in degree $n$. We therefore obtain $N-1$ such generators. For any $k = 1, \ldots, N-1$, the generators $c_{2k-1,2N}$,$c_{2k,2N}$, 
$M_{2k-1,2N}$ and $M_{2k,2N}$ form an acyclic subcomplex. But the subcomplex spanned by the $3$ generators $c_{2N-1,2N}$, $c_{2N,2N}$,
$M_{2N-1,2N}$ has homology generated by $[c_{2N,2N}]=[M_{2N-1,2N}]$ in degree $n$. For any $k,l = 1, \ldots, N$ with $k \le l$ and $k>1$, 
the generators $c_{2l-1,2k-1}$, $c_{2l,2k-1}$, $c_{2l-1,2k-2}$ and $c_{2l,2k-2}$ form an acyclic subcomplex. When $k=1$, we just have an acyclic
subcomplex with the $2$ generators $c_{2l-1,1}$ and $c_{2l,1}$. The total contribution of
the chords $M_{i,j}$ with $i<j$ and $c_{i,j}$ to the polynomial $P_{\Lambda^{(2N)}, \varepsilon_L, \varepsilon_R}$ is therefore the term $N t^n$.

The sum of the above two contributions therefore gives $P_{\Lambda^{(2N)}, \varepsilon_L, \varepsilon_R}(t) = N(1 + t^n)$
as announced.
\end{proof}

The next step is to perform some type of connected sum on the Legendrian link $\Lambda^{(2N)}$ in order to obtain a Legendrian sphere 
$\widetilde{\Lambda}^{(2N)} \subset J^1(\R^n)$. More precisely, for each $i = 1, \ldots, N-1$, we consider the Legendrian link 
formed by $\Lambda_{2i-1}, \Lambda_{2i}, \Lambda_{2i+1}$ and $\Lambda_{2i+2}$ as the $2$-copy of the Legendrian link formed 
by $\Lambda_{2i-1}$ and $\Lambda_{2i+1}$, and we perform the $2$-copy of the connected sum of $\Lambda_{2i-1}$ and $\Lambda_{2i+1}$ as follows.

We now describe the connected sum of $\Lambda_{2i-1}$ and $\Lambda_{2i+1}$ in more detail. We deform $\Lambda_{2i-1}$ by a 
Legendrian isotopy corresponding to the spinning of two iterated first Reidemeister moves on one half of the standard Legendrian unknot in $J^1(\R)$. 
Since this front in $J^0(\R)$ has a vertical symmetry axis, we can spin it around this axis to produce a Legendrian surface in 
$J^1(\R^2)$ as in~\cite[Section~3.2]{BST}. The resulting front has vertical symmetry planes and hence is spinnable around such a plane; iterating 
the spinning construction, we obtain the desired $2$-components Legendrian link in $J^1(\R^n)$ with cusp edges from (the deformation of) 
$\Lambda_{2i-1}$ and $\Lambda_{2i+1}$ facing each other and having the same Maslov potentials. This is illustrated by 
Figure~\ref{fig:connsum_highdim}. 

\begin{figure}
  \centerline{\includegraphics[width=8cm, height=4cm]{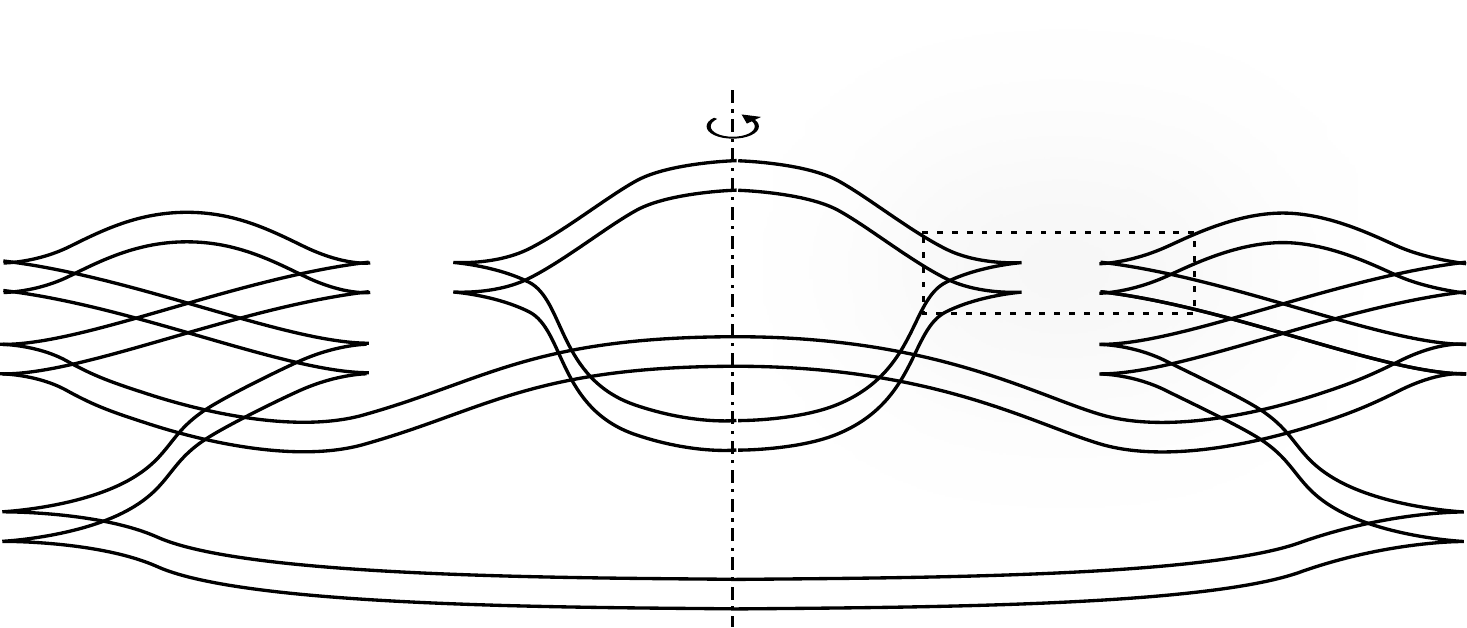}}
  \caption{Isotopy of $\Lambda_{2i-1}, \Lambda_{2i}, \Lambda_{2i+1}$ and $\Lambda_{2i+2}$.}
  \label{fig:connsum_highdim}
\end{figure}

\begin{figure}
  \centerline{\includegraphics[width=10cm, height=3cm]{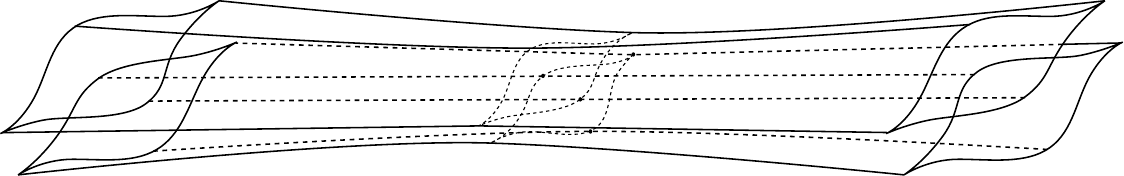}}
  \caption{Double tube.}
  \label{fig:doubletube}
\end{figure}

On this Figure, we consider the rectangular area limited by a dashed line: its image in $J^0(\R^+) \subset J^0(\R^n)$, 
i.e. with all spinning angles set to zero, is a rectangular area intersecting $\Lambda_{2i-1}, \Lambda_{2i}, \Lambda_{2i+1}$ 
and $\Lambda_{2i+2}$ in the 2-copy of two cusps facing each other. We then replace a neighborhood of this rectangular area
with the 2-copy of a connecting tube, as shown in Figure~\ref{fig:doubletube}. This operation is equivalent to the 2-copy
of the connected sum operation described in~\cite[Section~4]{BST}.

Finally, after performing $N-1$ times these 2-copies of connected sums, we are left with a Legendrian link composed of two connected components: 
$\Lambda_{\rm odd}$ resulting from the connected sum of $\Lambda_{2i-1}$ for $i=1, \ldots, N$ and $\Lambda_{\rm even}$ 
resulting from the connected sum 
of $\Lambda_{2i}$ for $i=1, \ldots, N$. We then perform an (ordinary) connected sum between these components in order to obtain
the Legendrian sphere $\widetilde{\Lambda}^{(2N)}$.

\begin{Prop} \label{prop:augtube}
The augmentations $\varepsilon_L$ and $\varepsilon_R$ of $\Lambda^{(2N)}$ induce augmentations $\widetilde{\varepsilon}_L$ 
and $\widetilde{\varepsilon}_R$ of $\widetilde{\Lambda}^{(2N)}$. 
\end{Prop}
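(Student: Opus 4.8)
The plan is to realize $\widetilde{\Lambda}^{(2N)}$ as the outcome of a finite sequence of connected sums applied to $\Lambda^{(2N)}$ and to propagate the augmentations $\varepsilon_L,\varepsilon_R$ through each connected sum by the very mechanism already used in the proof of Proposition~\ref{prop:connsum}. Concretely, the passage from $\Lambda^{(2N)}$ to $\widetilde{\Lambda}^{(2N)}$ consists of the $2(N-1)$ connected sums coming from the $N-1$ two-copies of connected sums, followed by one final ordinary connected sum, for a total of $2N-1$ connected sums --- exactly the number needed to merge $2N$ components into a single sphere. I would organize the argument so that each connected sum is treated as one independent local operation, the augmentations being pulled back one step at a time and the pullbacks composed at the end.

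At a single step, let $\Lambda'$ denote the current link and $\overline{\Lambda'}$ the result of one connected sum between two of its components. By Dimitroglou Rizell's surgery formula~\cite[Theorem~1.6]{DR}, invoked exactly as in the proof of Proposition~\ref{prop:connsum}, there is a DGA isomorphism $\Psi : (\mathcal{A}(\overline{\Lambda'}), \partial_{\overline{\Lambda'}}) \to (\mathcal{A}(\Lambda'; S), \partial_S)$, where the target is generated by the Reeb chords of $\Lambda'$ together with a single formal generator $s$ of degree $n-1$ satisfying $\partial_S s = 0$. Given an augmentation $\varepsilon$ of $\mathcal{A}(\Lambda')$ --- here $\varepsilon_L$ or $\varepsilon_R$, and at later steps their images from the previous step --- one extends it by setting $\varepsilon(s)=0$. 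As noted in the proof of Proposition~\ref{prop:connsum}, this extension is again an augmentation: the terms of $\partial_S$ that are not already terms of $\partial$ involve the factor $s$ and are therefore killed by $\varepsilon$, so that $\varepsilon \circ \partial_S = 0$. Pulling back through $\Psi$ yields the induced augmentation $\Psi^*\varepsilon$ of $\overline{\Lambda'}$, which geometrically is the augmentation transported by the surgery Lagrangian cobordism. Composing the $2N-1$ such pullbacks produces $\widetilde{\varepsilon}_L$ and $\widetilde{\varepsilon}_R$.

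The points requiring care, and where I expect the main obstacle to lie, are the geometric verifications that the surgery formula genuinely applies at every stage. A two-copy of a connected sum should be regarded as two independent connected sums: the handle joining the lower pair $\Lambda_{2i-1}, \Lambda_{2i+1}$ and the handle joining the upper pair $\Lambda_{2i}, \Lambda_{2i+2}$ are related by the vertical shift defining the two-copy and are therefore disjoint, while each is attached along a line whose interior is disjoint from the remaining front, exactly as arranged in the construction following~\cite[Section~4]{BST}. Hence \cite[Theorem~1.6]{DR} applies to each surgery separately; the intermediate components and the mixed chords linking the glued components merely contribute extra generators that are carried along unchanged and do not interfere with the local model. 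The second point is that the spinning construction was designed so that the cusp loci to be merged face each other with matching Maslov potentials, which is precisely what guarantees that each formal generator has the well-defined degree $n-1$ and that all gradings stay consistent; I would record this compatibility explicitly at each step. Granting these geometric inputs, the algebra reduces verbatim to that of Proposition~\ref{prop:connsum}, so that no new differential computation is required and the proposition follows.
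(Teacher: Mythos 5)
Your reduction of the $2$-copy of a connected sum to two independent applications of Dimitroglou Rizell's surgery formula does not work, and the point at which it fails is precisely the content of the paper's proof. The two handles in a $2$-copy of a connected sum are a $2$-copy of one another: the upper tube lies a Reeb-distance $\varepsilon$ above the lower tube, so although they are disjoint as subsets of $J^1(\R^n)$, their front projections essentially coincide and there are mixed Reeb chords between them. The paper models the double connecting tube as a thickened $(n-1)$-dimensional standard Legendrian Hopf link and lists \emph{six} new chords, not two: besides the two pure chords of degree $n-1$ (which are what two copies of the formal generator $s$ of \cite[Theorem~1.6]{DR} would account for), there are four mixed chords $c^h_{2i-1,2i}$, $c^h_{2i,2i-1}$, $m^h_{2i-1,2i}$, $M^h_{2i-1,2i}$, including one of degree $0$. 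Your assertion that the two handles are attached along lines whose interiors are disjoint from the remaining front, so that each surgery is a clean local model, is exactly what fails here; the single-surgery stabilization $(\mathcal{A}(\Lambda;S),\partial_S)$ with one closed generator $s$ does not describe the resulting DGA.

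This is not merely a bookkeeping issue, because the second half of your argument --- extend the augmentation by $0$ on all new generators --- is then false. The paper shows that $\partial s_{2i-1,2i} = m_{2i-1,2i} + m'_{2i-1,2i}$ forces $\varepsilon(m'_{2i-1,2i}) = \varepsilon(m_{2i-1,2i})$, and hence forces the value of the induced augmentation on the degree-$0$ chord $m^h_{2i-1,2i}$ of the double tube. Since $\varepsilon_R(m_{2i-1,2i})=1$, the induced augmentation must take the value $1$ on this new chord; the zero extension is not an augmentation. Identifying this forced nonzero value, and then checking (as the paper does by a separate argument about Morse flow trees being pushed into the middle slice of the tube, plus a contradiction argument about new terms in the differentials of the old chords) that the resulting map still annihilates $\textrm{im }\partial$, is the actual work of the proposition. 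Your proposal, by reducing everything verbatim to Proposition~\ref{prop:connsum}, skips it. The final ordinary connected sum at the end of your argument is handled correctly, but that is the easy part.
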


\begin{proof}
Note that it suffices to show that an augmentation induces another augmentation after a single 2-copy of a connected sum.  
To this end, we describe this operation differently, in order to gain a better control on the Reeb chords during this process.
Before performing the 2-copy connected sum connecting 
$\Lambda_{2i-1}$ and $\Lambda_{2i}$ to $\Lambda_{2i+1}$ and $\Lambda_{2i+2}$ respectively, we deform these components by a Legendrian
isotopy in order to create a pair of canceling critical points $m'_{2i-1,2i}$ of index $0$ and $s_{2i-1,2i}$ of index $1$ for the Morse function $f_{2i} - f_{2i-1}$ 
and a similar pair $m'_{2i+1,2i+2}$, $s_{2i+1,2i+2}$ for $f_{2i+2} - f_{2i+1}$ near the attaching locus of the connecting double tube. More precisely, the chords
$m'_{2i-1,2i}$ and $m'_{2i+1,2i+2}$ are contained in the small balls that are removed during the connected sums, while the chords $s_{2i-1,2i}$ and 
$s_{2i+1,2i+2}$ are just outside these balls. The connecting double tube is the thickening of an $n-1$-dimensional standard Legendrian Hopf link, and we
shape each tube so that its thickness in the $z$-direction is minimal in the middle. We extend the Morse functions $f_{2i} - f_{2i-1}$ and $f_{2i+2} - f_{2i+1}$
by a Morse function on the connecting tube decreasing towards its middle and having exactly two critical points (of index $0$ and $n-1$) in its middle slice.
All Reeb chords for the connecting double tube are contained in this middle slice and correspond to the generators described in Proposition~\ref{prop:hopf}
with $k=1$ and $n$ replaced with $n-1$:

\begin{center}
\begin{tabular}{c|cc}
& grading & length \\
\hline
$c^h_{2i-1,2i-1}$ & $n-1$ & $\ell' < \ell$ \\
$c^h_{2i,2i}$ & $n-1$ & $\ell'$ \\
$c^h_{2i-1,2i}$ & $n$ & $\ell'+\varepsilon$ \\
$c^h_{2i,2i-1}$ & $n-2$ & $\ell'-\varepsilon$ \\
$m^h_{2i-1,2i}$ & $0$ & $\varepsilon-\delta$ \\
$M^h_{2i-1,2i}$  & $n-1$ & $\varepsilon+\delta$
\end{tabular}
\end{center}

The last two generators correspond to the critical points of the Morse function on the connecting tube mentioned above. The unital subalgebra $\mathcal{A}^h$ generated by
these 6 generators is a subcomplex of the Chekanov-Eliashberg DGA, because Morse-flow trees are pushed towards the middle of the double connecting tube
due to its shape. By Corollary~\ref{cor:aughopf}, this subcomplex has two augmentations such that only $m^h_{2i-1,2i}$ is possibly augmented. On the other
hand, we have $\partial s_{2i-1,2i} = m_{2i-1,2i} + m'_{2i-1,2i}$ with no other terms because the length of $s_{2i-1,2i}$ is very short. Hence, for any augmentation
$\varepsilon$, we must have $\varepsilon(m'_{2i-1,2i}) = \varepsilon(m_{2i-1,2i})$ and this forces the choice of the augmentation for $\mathcal{A}^h$. More precisely, the map $\widetilde{\varepsilon}$ induced by $\varepsilon$ must satisfy 
$\widetilde{\varepsilon}(m^h_{2i-1,2i}) = \varepsilon(m_{2i-1,2i})$. Similarly, arguing with $s_{2i+1,2i+2}$, we also have
$\widetilde{\varepsilon}(m^h_{2i-1,2i}) = \varepsilon(m_{2i+1,2i+2})$. Note that these relations are compatible since each of 
$\varepsilon_L$ and $\varepsilon_R$ have the same value on $m_{2i-1,2i}$ and $m_{2i+1,2i+2}$.

Let us check that the resulting maps $\widetilde{\varepsilon}_L, \widetilde{\varepsilon}_R : \mathcal{A}(\widetilde{\Lambda}^{(2N)}) \to \mathbb{Z}_2$ satisfy 
$\widetilde{\varepsilon}_L \circ \partial = 0 = \widetilde{\varepsilon}_R \circ \partial$. We already saw that these relations are satisfied on $\mathcal{A}^h$ as well as on $s_{2i-1,2i}$ and $s_{2i+1,2i+2}$. 
On any other chord $c$, the relation was satisfied before the 2-copy of connected sum.
We claim that the augmented terms in $\partial c$ are modified by the 2-copy of connected sum in the following way:
all occurrences of $m'_{2i-1,2i}$ and $m'_{2i+1,2i+2}$ are replaced with $m^h_{2i-1,2i}$. In particular, the maps
$\widetilde{\varepsilon}_L, \widetilde{\varepsilon}_R$
keep the same value on these terms and the augmentation relation continues to hold after the 2-copy of connected sum.

To verify the claim, note that the region in which the 2-copy of connected sum is taking place is a trap for Morse-flow trees:
any portion of such a tree entering this region cannot leave it, because all relevant gradient vector fields are pointing inwards.
We only have to consider augmented terms, since these are the only ones that could harm the augmentation relation.
We first consider an augmented term that does not contain $m'_{2i-1,2i}$ nor $m'_{2i+1,2i+2}$. If the corresponding Morse
flow tree enters the region in which the 2-copy of connected sum is taking place, it must end at a cusp edge. Moreover, it
cannot contain any trivalent vertex, otherwise it would not be rigid. Hence, it is a single gradient trajectory ending at a cusp 
edge. After the 2-copy of connected sum, it becomes another gradient trajectory, also ending at a cusp edge.
Hence the corresponding term is not affected by the 2-copy of connected sum.
Consider now an augmented term containing $m'_{2i-1,2i}$ or $m'_{2i+1,2i+2}$. A rigid Morse-flow tree cannot have 
a 2-valent negative puncture at such a chord, since it is a minimum of the Morse function $f_{2i} - f_{2i-1}$ or 
$f_{2i+2} - f_{2i+1}$~\cite[Lemma~3.7]{E}, so that these chords are 1-valent negative punctures. The only other way
a fragment of Morse-flow tree contained in the region in which the 2-copy of connected sum is taking place can end is
at a cusp edge. As above, it cannot contain any trivalent vertex, otherwise it would not be rigid. Hence, it is a single
gradient trajectory ending at a minimum $m'_{2i-1,2i}$ or $m'_{2i+1,2i+2}$. After the 2-copy of connected sum, it 
becomes another gradient trajectory, also ending at a minimum $m^h_{2i-1,2i}$. 
Conversely, consider an augmented term containing $m^h_{2i-1,2i}$ after the 2-copy of connected sum. In particular,
the corresponding Morse-flow tree can only end at the chord $m^h_{2i-1,2i}$ (at a 1-valent negative puncture, as above) 
or at a cusp edge. For the same reason as above, such a rigid tree cannot contain a trivalent vertex in the 2-copy of
the connecting tube. Hence, it is just a single gradient trajectory ending at $m^h_{2i-1,2i}$. If we remove the 2-copy 
of the connecting tube and replace it with the regions containing the minima $m'_{2i-1,2i}$ and $m'_{2i+1,2i+2}$, this
gradient trajectory is replaced with a single gradient trajectory ending at one of these minima. In other words, such an
augmented term involving $m^h_{2i-1,2i}$ always comes from the substitution of $m'_{2i-1,2i}$ and $m'_{2i+1,2i+2}$  
with $m^h_{2i-1,2i}$, proving the claim.
\end{proof}

We are now in position to show that these 2-copies of connected sums destroy almost all terms in the Poincar\'e polynomial for bilinearized LCH.

\begin{Prop}  \label{prop:polyNtilde}
The Poincar\'e polynomial $P_{\widetilde{\Lambda}^{(2N)}, \widetilde{\varepsilon}_L, \widetilde{\varepsilon}_R}$ is equal to $1$.
\end{Prop}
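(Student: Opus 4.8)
My plan is to compute the bilinearized homology of $\widetilde{\Lambda}^{(2N)}$ by enlarging the complex of Proposition~\ref{prop:bilinN} to incorporate the $N-1$ double connecting tubes and the final connected sum, and then exhibiting a decomposition into acyclic blocks leaving a single generator in degree $0$. Before doing so I would record a reduction that organizes the bookkeeping. Since $\widetilde{\Lambda}^{(2N)}$ is a connected Legendrian sphere, the only nonvanishing homology is $H_0$ and $H_n$. Granting that $\widetilde{\varepsilon}_L$ and $\widetilde{\varepsilon}_R$ are not DGA homotopic, Theorem~\ref{thm:A} gives $\tau_n = 0$ and Proposition~\ref{prop:tau} gives $\tau_0 \neq 0$, so in the decomposition $P = q + p$ the image polynomial $q$ is forced to be the constant $1$. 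It then suffices to prove that the kernel polynomial $p$ vanishes, i.e. that the total dimension $P(1)$ equals $1$ with the unique class living in degree $0$. The non-homotopy of $\widetilde{\varepsilon}_L$ and $\widetilde{\varepsilon}_R$ I would establish separately via Lemma~\ref{lem:htpy}, by exhibiting a degree-$0$ cycle on which $\widetilde{\varepsilon}_L - \widetilde{\varepsilon}_R$ does not vanish (descending from the surviving chord $m_{2N-1,2N}$ of Proposition~\ref{prop:polyN}); alternatively it falls out of the computation below.

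Next I would assemble the enlarged complex. Its generators are those of Proposition~\ref{prop:bilinN}, together with, for each tube $i = 1, \ldots, N-1$, the six Hopf chords $c^h_{2i-1,2i-1}, c^h_{2i,2i}, c^h_{2i-1,2i}, c^h_{2i,2i-1}, m^h_{2i-1,2i}, M^h_{2i-1,2i}$ of Proposition~\ref{prop:hopf} (with $n$ replaced by $n-1$ and $k=1$) and the two canceling pairs $(m'_{2i-1,2i}, s_{2i-1,2i})$, $(m'_{2i+1,2i+2}, s_{2i+1,2i+2})$ introduced in Proposition~\ref{prop:augtube}, plus the degree-$(n-1)$ generator of the final ordinary connected sum. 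The differential has three sources: the internal Hopf differential obtained by bilinearizing Proposition~\ref{prop:hopf} with the augmentations of Corollary~\ref{cor:aughopf}; the canceling-pair relations $\partial s_{2i-1,2i} = m_{2i-1,2i} + m'_{2i-1,2i}$ together with the identifications $\widetilde{\varepsilon}(m^h) = \widetilde{\varepsilon}(m') = \widetilde{\varepsilon}(m)$ recorded in the proof of Proposition~\ref{prop:augtube}; and the surgery-modified part of the old differential of Proposition~\ref{prop:bilinN}.

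With the differential in hand I would decompose the complex into acyclic subcomplexes plus one survivor, in the combinatorial style of the proof of Proposition~\ref{prop:polyN}. The target bookkeeping is that each double tube, together with its canceling pairs, lowers by one both the degree-$0$ count and the degree-$n$ count of Proposition~\ref{prop:polyN} while leaving degree $n-1$ untouched, so that each tube changes the Poincar\'e polynomial by $-(1+t^n)$ and carries $N(1+t^n)$ down to $1+t^n$. Concretely, the canceling pairs are acyclic and merely serve to fuse the two old degree-$0$ chains of $m$-chords through $m^h$, removing one class in degree $0$; meanwhile the surviving degree-$(n-1)$ class $[c^h_{2i,2i}] = [M^h_{2i-1,2i}]$ of each Hopf piece should be hit by a surgery cross-term from a surplus old degree-$n$ generator, removing one class in degree $n$ and cancelling the unwanted $t^{n-1}$. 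Finally, the last ordinary connected sum I would treat through Proposition~\ref{prop:connsum}: the surviving degree-$n$ class of the two-component link is the fundamental class of a single chain, so $\tau_{n,1} - \tau_{n,2} \neq 0$, hence $\rho_n \neq 0$ and this connected sum removes the term $t^n$, leaving $P = 1$.

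The main obstacle will be the second step: determining the surgery cross-terms that couple the old chords of Proposition~\ref{prop:bilinN} to the new Hopf and canceling-pair generators. As in the proof of Proposition~\ref{prop:augtube}, the decisive point is that the slim shape of the double connecting tube confines the relevant rigid Morse flow trees to its middle slice, so that the only genuinely new terms are those forced by the replacement of $m$ by $m^h$ and by $\partial s = m + m'$. Once these terms are pinned down, the decomposition into acyclic blocks is routine but lengthy, entirely parallel to Proposition~\ref{prop:polyN}; the delicate accounting is to check that precisely one degree-$0$ generator survives after all $N-1$ tubes and the final connected sum, with every other generator absorbed into an acyclic block.
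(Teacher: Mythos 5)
Your proposal is correct and follows essentially the same route as the paper's proof: each double connecting tube contributes $-(1+t^n)$ because the canceling pairs $(m',s)$ fuse the old degree-$0$ classes through $m^h_{2i-1,2i}$ and because $c^h_{2i,2i}$ is hit by the old degree-$n$ generators $c_{2i,2i}$ and $c_{2i+2,2i+2}$, and the final ordinary connected sum removes $t^n$ via Proposition~\ref{prop:connsum}. The paper organizes this as an induction on the number of tubes rather than one global decomposition into acyclic blocks, and does not use your preliminary reduction via Theorem~\ref{thm:A}, but the underlying mechanisms are identical.
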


\begin{proof}
Let us show by induction that, after applying $i$ successive $2$-copies of connected sums on $\Lambda^{(2N)}$, 
its Poincar\'e polynomial is given by $(N-k)(1+t^n)$. Proposition~\ref{prop:polyN} corresponds to the case $i=0$. 
Let us denote for shorthand notation $C_*$ the bLCH chain complex after $i-1$ successive $2$-copies of connected sums,
and $\tilde{C}_*$ the bLCH chain complex after $i$ successive $2$-copies of connected sums. Using the description of 
the $i$th 2-copy of connected sum in the proof of Proposition~\ref{prop:augtube}, we see that this operation has two
effects on the complex $C_*$. First, the generators $m'_{2i-1,2i}$ and $m'_{2i+1,2i+2}$ are removed. Second, we add 
generators of the bLCH complex $C^h_*$ of the $(n-1)$-dimensional standard Legendrian Hopf link with distinct 
augmentations. Recall that $C^h_*$ forms a subcomplex of $\tilde{C}_*$, as explained in the proof of
Proposition~\ref{prop:augtube}. 

Since the $2$-copy of connected sum is performed away from rigid holomorphic disks connecting generators
of $\tilde{C}_*/C^h_*$, the differential on this quotient complex is directly induced from that of $C_*$. In particular, we have
$\partial s_{2i-1,2i} = m_{2i-1,2i}$ and $\partial s_{2i+1,2i+2} = m_{2i+1,2i+2}$ in $\tilde{C}_*/C^h_*$. 
Hence, its homology
coincides with the homology of $C_*$, except in degree $0$, where it has $2$ fewer generators. Hence, its Poincar\'e 
polynomial is $(N-i-1) + (N-i+1)t^n$.
On the other hand, the homology of $C^h_*$ is given by Proposition~\ref{prop:polyN} with $N=1$ and $n$ replaced
with $n-1$. Hence its Poincar\'e polynomial is $1 + t^{n-1}$. 

In order to deduce the homology of $\tilde{C}_*$, we consider the long exact sequence
$$
\ldots \!\to\! H_{k+1}(\tilde{C}_*/C^h_*) \!\to\! H_k(C^h_*) \!\to\! H_k(\tilde{C}_*) \!\to\! H_k(\tilde{C}_*/C^h_*) \!\to\! 
H_{k-1}(C^h_*) \!\to\! \ldots
$$
When $k=0$, we see that the generator $[m^h_{2i-1,2i}]$ of $H_0(C^h_*)$ injects into
$H_0(\tilde{C}_*)$, as it can only be hit by $s_{2i-1,2i}$ and $s_{2i+1,2i+2}$, but these do not survive
in the homology of the quotient complex. Hence the rank of $H_0(\tilde{C}_*)$ is $N-i$.

When $k=n$, we see that the generator $[c_{2i+2,2i+2}]$ in $H_n(\tilde{C}_*/C^h_*)$, which was not affected by 
the $i-1$ first $2$-copies of connected sums, hits the generator $[c^h_{2i,2i}]$ of $H_{n-1}(C^h_*)$, because there 
exists a single Morse flow tree connecting them. Indeed, on Figure~\ref{fig:connsum_highdim}, the chord $c_{2i+2,2i+2}$
is in the middle of the uppermost connected component, and the Morse flow tree starts from there to the right in the plane
of the Figure (corresponding to all spinning angles set to zero), then enters the dotted rectangle, hence the upper tube
in Figure~\ref{fig:doubletube}, until it reaches the chord $c^h_{2i,2i}$ sitting in the middle of that tube. Hence,
the rank of $H_n(\tilde{C}_*)$ is $N-i$.
The Poincar\'e polynomial for the homology of $\tilde{C}_*$ is therefore $(N-i)(1+t^n)$ as announced.

After these $N-1$ operations,
we are therefore left with the Poincar\'e polynomial $1+t^n$. The last step in the construction of $\widetilde{\Lambda}^{(2N)}$ is an ordinary connected sum 
between the remaining two connected components $\Lambda_{\rm even}$ (the connected sum of $\Lambda_{2i}$ for $i = 1, 
\ldots, N$) and $\Lambda_{\rm odd}$ (the connected sum of $\Lambda_{2i-1}$ for $i = 1, \ldots, N$). Let us denote the 
corresponding $2$-component Legendrian link by $\Lambda'$.

As in the proof of Proposition~\ref{prop:tau}, the map $\tilde{\tau}_0$ from the duality exact sequence~\eqref{eq:bilindualityseq} with
$\varepsilon_1 = \widetilde{\varepsilon}_R$ and $\varepsilon_2 = \widetilde{\varepsilon}_L$ is given at chain level by 
$\widetilde{\varepsilon}_R - \widetilde{\varepsilon}_L$, except that we have to refine according to the connected
component $\Lambda_{\rm even}$ or $\Lambda_{\rm odd}$ which is hit. Note that all chords augmented by $\widetilde{\varepsilon}_L$
are starting on $\Lambda_{\rm odd}$ and all chords augmented by $\widetilde{\varepsilon}_R$ are ending on $\Lambda_{\rm odd}$.
This means that $\tilde{\tau}_0$ necessarily takes its values in $H_0(\Lambda_{\rm odd})$. By Proposition~\ref{prop:relpoly}, since
$P_{\Lambda', \widetilde{\varepsilon}_L, \widetilde{\varepsilon}_R}(t) = 1+t^n$ and $H_*(\Lambda')$ has rank $4$, 
we must have $p=0$ and hence
$P_{\Lambda', \widetilde{\varepsilon}_R, \widetilde{\varepsilon}_L}(t) = 1+t^n$ as well. Therefore, the image of the map
$\tilde{\tau}_0 : LCH_0^{\widetilde{\varepsilon}_R, \widetilde{\varepsilon}_L}(\Lambda') \to H_0(\Lambda')$ is equal to 
$H_0(\Lambda_{\rm odd})$.

We deduce that $\ker \tilde{\sigma}_n = H_0(\Lambda_{\rm odd})$ in the duality exact sequence~\eqref{eq:bilindualityseq}
with $\varepsilon_1 = \widetilde{\varepsilon}_R$ and $\varepsilon_2 = \widetilde{\varepsilon}_L$.
Consider now the map $\tau_n$ in the duality exact sequence~\eqref{eq:bilindualityseq}
with $\varepsilon_1 = \widetilde{\varepsilon}_L$ and $\varepsilon_2 = \widetilde{\varepsilon}_R$.
Since $\tilde{\sigma}_n$ and $\tau_n$ are adjoint in the sense of~\cite[Proposition 3.9]{EESdual}, $\textrm{im } \tau_n$ is the
annihilator of $H_0(\Lambda_{\rm odd})$ for the intersection pairing, which is $H_n(\Lambda_{\rm even})$. In particular, 
the map $\tau_{n,1} - \tau_{n,2} = \tau_{n,\rm odd} - \tau_{n,\rm even}$ from Proposition \ref{prop:connsum} does not vanish, 
so that this last connected sum modifies the Poincar\'e polynomial by $-t^n$. 
We are therefore left with $P_{\widetilde{\Lambda}^{(2N)}, \widetilde{\varepsilon}_L, \widetilde{\varepsilon}_R}(t) = 1$ 
as announced.
\end{proof}

%
%
\subsection{Geography of bLCH for Legendrian spheres}

The next step in our construction is to add to $\widetilde{\Lambda}^{(2N)}$ a standard Legendrian unknot $\Lambda_0$ which forms with the bottom
$k$ components $\Lambda_1, \ldots, \Lambda_k$ a Legendrian link isotopic to the $k+1$-copy of the standard Legendrian unknot,
but which is unlinked with the $2N-k$ top components $\Lambda_{k+1}, \ldots, \Lambda_{2N}$. We fix the Maslov potential of the component 
$\Lambda_0$ to be given by the Maslov potential of $\Lambda_1$ plus $m-1$, for some integer $m$. We can deform this link by a Legendrian 
isotopy in order to widen the components $\Lambda_1, \ldots, \Lambda_k \subset J^1(\R^n)$ so that their projection to $\R^n$ becomes much 
larger than the projection of the components $\Lambda_{k+1}, \ldots, \Lambda_{2N}$. We further narrow the component $\Lambda_0$ so that its
projection to $\R^n$ does not intersect the projection of the components $\Lambda_{k+1}, \ldots, \Lambda_{2N}$. We denote the resulting Legendrian
link by $\widetilde{\Lambda}^{(2N)}_{(k,m)}$. 

The addition of $\Lambda_0$ to $\widetilde{\Lambda}^{(2N)}$  is illustrated by Figure~\ref{fig:partitiononenote} in the case $k=4$,
where the picture zooms on the bottom strata of the $k$ components $\Lambda_1, \ldots, \Lambda_k$, which are represented
as portions of horizontal planes.

\begin{figure}
  \centerline{\includegraphics[width=7cm]{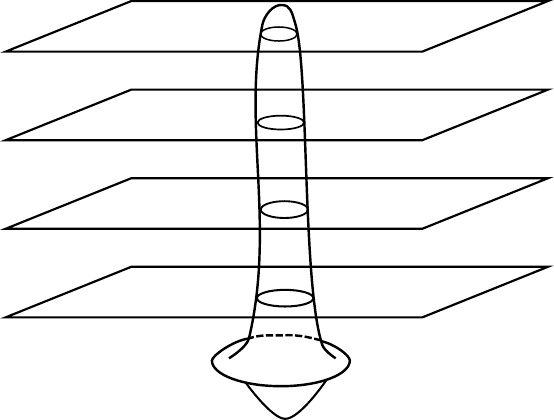}}
  \caption{Additional component $\Lambda_0$ with $k=4$.}
  \label{fig:partitiononenote}
\end{figure}

This Legendrian link $\widetilde{\Lambda}^{(2N)}_{(k,m)}$ has several additional Reeb chords compared to $\widetilde{\Lambda}^{(2N)}$. These are
easily identified within the $k+1$-copy of the standard Legendrian unknot formed by $\Lambda_0, \Lambda_1, \ldots, \Lambda_k$ and are given by
\begin{center}
\begin{tabular}{c|cc}
& grading & length \\
\hline
$c_{0,0}$ & $n$ & $\ell$ \\
$c_{0,j}$ & $n+j-m$ & $\ell+\varepsilon j$ \\
$c_{j,0}$ & $n-j+m$ & $\ell-\varepsilon j$ \\
$m_{0,j}$ & $j-m-1$ & $\varepsilon j-\delta$ \\
$M_{0,j}$  & $n+j-m-1$ & $\varepsilon j+\delta$
\end{tabular}
\end{center}
where the index $j$ takes all possible values between $1$ and $k$. 

We extend the augmentations $\widetilde{\varepsilon}_L$ and $\widetilde{\varepsilon}_R$ by zero on these additional chords in order to define 
augmentations, still denoted by $\widetilde{\varepsilon}_L$ and $\widetilde{\varepsilon}_R$, on the Chekanov-Eliashberg DGA of 
$\widetilde{\Lambda}^{(2N)}_{(k,m)}$. Since the mixed chords involving $\Lambda_0$ are not augmented, it follows that the vector space generated 
by the above chords is a direct summand of the bilinearized complex with respect to the differential 
$\partial^{\widetilde{\varepsilon}_L, \widetilde{\varepsilon}_R}$.

\begin{Prop}  \label{prop:notecpx}
The bilinearized differential $\partial^{\widetilde{\varepsilon}_L, \widetilde{\varepsilon}_R}$ of $\widetilde{\Lambda}^{(2N)}_{(k,m)}$ on the subcomplex
generated by the chords involving the component $\Lambda_0$ is given by
\begin{eqnarray*}
\partial^{\widetilde{\varepsilon}_L, \widetilde{\varepsilon}_R} c_{0,0} &=& 0, \\
\partial^{\widetilde{\varepsilon}_L, \widetilde{\varepsilon}_R} c_{0,j} &=& M_{0,j} + \overline{j} \ c_{0,j-1}, \\
\partial^{\widetilde{\varepsilon}_L, \widetilde{\varepsilon}_R} c_{j,0} &=& \overline{j} \ c_{j+1,0}, \\
\partial^{\widetilde{\varepsilon}_L, \widetilde{\varepsilon}_R} m_{0,j} &=& \overline{j} \ m_{0,j-1}, \\
\partial^{\widetilde{\varepsilon}_L, \widetilde{\varepsilon}_R} M_{0,j} &=& \overline{j} \ M_{0,j-1}, 
\end{eqnarray*}
for $j = 1, \ldots, k$, where $\overline{j}$ is the modulo 2 reduction of $j$ and where in the right hand sides $c_{k+1,0}, c_{0,0}, m_{0,0}$ and 
$M_{0,0}$ should be replaced by zero.
\end{Prop}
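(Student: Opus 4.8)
The plan is to reduce everything to Proposition~\ref{prop:bilinN} by specializing its formulas to the first index $0$. By construction the components $\Lambda_0, \Lambda_1, \ldots, \Lambda_k$ form a link isotopic to the $(k+1)$-copy of the standard unknot, and the widening/narrowing isotopy makes $\Lambda_0$ unlinked with, and disjoint in projection from, the top components $\Lambda_{k+1}, \ldots, \Lambda_{2N}$. First I would apply Proposition~\ref{prop:hopf} to each pair $\{\Lambda_0, \Lambda_j\}$ and invoke the length and grading constraints already used in the proof of Proposition~\ref{prop:bilinN}: every rigid holomorphic disk (equivalently Morse flow tree~\cite{E}) contributing to the differential of a chord involving $\Lambda_0$ then has its boundary confined to the region of this $(k+1)$-copy, so that the relevant combinatorics is that of the quiver on the vertices $0, 1, \ldots, k$. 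In particular the disjointness of $\Lambda_0$ from the top components forbids any contributing path from leaving this region.

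The decisive structural observation is that $\widetilde{\varepsilon}_L$ and $\widetilde{\varepsilon}_R$ vanish on every chord involving $\Lambda_0$. Hence in each monomial contributing to $\partial^{\widetilde{\varepsilon}_L, \widetilde{\varepsilon}_R}$ of a $\Lambda_0$-chord, the unique non-augmented (kept) factor must itself involve $\Lambda_0$, while all augmented factors are chords of $\widetilde{\Lambda}^{(2N)}$ of the form $m_{l,l+1}$, the only augmented generators; this is exactly what makes the listed space a subcomplex. The crucial consequence is that the chord $m_{0,1}$ --- which is the $i=0$ analog of the chords $m_{i,i+1}$ responsible for the ``$c_{i+1,j}$'' and ``$m_{i+1,j}$'' terms in Proposition~\ref{prop:bilinN} --- can never be augmented. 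This removes precisely those terms (equivalently, they carry the vanishing coefficient $\overline{0}$), leaving formulas identical to those of Proposition~\ref{prop:bilinN} with the first index set to $0$, after discarding generators with an index equal to $0$ or $-1$ in an illegal slot or of the form $m_{l,l}$, $M_{l,l}$.

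Concretely I would then read off each line. For $\partial^{\widetilde{\varepsilon}_L, \widetilde{\varepsilon}_R} c_{0,j} = M_{0,j} + \overline{j}\, c_{0,j-1}$, the term $M_{0,j}$ is the single-chord contribution (the analog of $M_{12}$ in Proposition~\ref{prop:hopf}), the term $c_{0,j-1}$ comes from the monomial $m_{j-1,j}\, c_{0,j-1}$ in which $m_{j-1,j}$ is augmented by $\widetilde{\varepsilon}_L$, contributing the coefficient $\widetilde{\varepsilon}_L(m_{j-1,j}) = \overline{j}$, and the would-be term $c_{1,j}$ is absent since it would require augmenting $m_{0,1}$. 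Likewise $\partial^{\widetilde{\varepsilon}_L, \widetilde{\varepsilon}_R} c_{0,0}=0$ because the only candidate monomial $c_{1,0}\, m_{0,1}$ has \emph{two} un-augmented factors; the remaining formulas for $c_{j,0}$, $m_{0,j}$ and $M_{0,j}$ follow identically, the coefficients $\overline{j}$ arising from $\widetilde{\varepsilon}_L(m_{j-1,j}) = \overline{j}$ and $\widetilde{\varepsilon}_R(m_{j,j+1}) = \overline{j}$ exactly as in Proposition~\ref{prop:bilinN}.

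The main obstacle is to be certain that the global structure of $\widetilde{\Lambda}^{(2N)}$ --- the connecting double tubes and the top components --- introduces no new terms in the differentials of the $\Lambda_0$-chords. This is controlled by two facts. First, the attaching regions of the connected sums are small neighborhoods disjoint from the relevant rigid curves, as in the proof of Proposition~\ref{prop:augtube}, so that the chords $m_{j-1,j}$ for $j \le k$ and their augmentation values agree with those of $\Lambda^{(2N)}$. Second, the disjointness of $\Lambda_0$ from the top components, together with the length estimates, confines every contributing disk to the $(k+1)$-copy. Once these two points are verified, the specialization of Proposition~\ref{prop:bilinN} yields the stated formulas.
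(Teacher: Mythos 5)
Your proposal is correct and follows essentially the same route as the paper: the paper's proof is precisely the one-line observation that the computations of Proposition~\ref{prop:bilinN} carry over with $2N$ replaced by $k$ and the first index set to $0$, all terms that would require augmenting a mixed chord involving $\Lambda_0$ being dropped since those chords are unaugmented. Your additional checks (confinement of contributing disks to the $(k+1)$-copy region and non-interference of the connecting tubes) are sound elaborations of points the paper leaves implicit.
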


\begin{proof}
This result follows from the same computations as in Proposition~\ref{prop:bilinN}, in which we replace $2N$ with $k$,
$i$ with $0$ and where all terms obtained by changing the index $i$ are omitted since the mixed Reeb chords involving
$\Lambda_0$ are not augmented.
\end{proof}

\begin{Prop} \label{prop:noteN}
Consider the Legendrian link $\widetilde{\Lambda}^{(2N)}_{(k,m)} \subset J^1(\R^n)$. Its Poincar\'e polynomial 
with respect to the augmentations $\widetilde{\varepsilon}_L$ and $\widetilde{\varepsilon}_R$  is given by 
$$
P_{\widetilde{\Lambda}^{(2N)}_{(k,m)}, \widetilde{\varepsilon}_L, \widetilde{\varepsilon}_R}(t) =
1 + t^n +  t^{-m} + t^a,
$$
where
\begin{equation} \label{eq:exponent}
a = \left\{ \begin{array}{ll}
k-m-1 & \textrm{if } k \textrm{ is even,} \\
n-k+m &  \textrm{if } k \textrm{ is odd.} 
\end{array} \right.
\end{equation}
\end{Prop}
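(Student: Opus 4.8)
The plan is to split the bilinearized complex of $\widetilde{\Lambda}^{(2N)}_{(k,m)}$ as a direct sum and compute each summand separately. The chords involving $\Lambda_0$ span the subcomplex $A$ whose differential is recorded in Proposition~\ref{prop:notecpx}, and I would first argue that the remaining chords (those of $\widetilde{\Lambda}^{(2N)}$) span a complementary subcomplex carrying \emph{exactly} the bilinearized differential of $\widetilde{\Lambda}^{(2N)}$. The key observation is that the mixed chords between $\Lambda_0$ and the other components are augmented to zero by both $\widetilde{\varepsilon}_L$ and $\widetilde{\varepsilon}_R$. Any rigid disk with positive puncture at a chord of $\widetilde{\Lambda}^{(2N)}$ whose boundary meets $\Lambda_0$ must enter and leave $\Lambda_0$, hence carry at least two negative punctures at such mixed chords. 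Since at most one negative puncture is the surviving output while the others are augmented, at least one mixed puncture is augmented by zero and the disk contributes nothing. Consequently such a differential never produces a $\Lambda_0$-chord and is unaffected by the presence of $\Lambda_0$. The bilinearized complex therefore decomposes as $A \oplus C(\widetilde{\Lambda}^{(2N)})$, and by Proposition~\ref{prop:polyNtilde} the second summand contributes $1$ to the Poincar\'e polynomial. It remains to prove $P_{H_*(A)}(t) = t^n + t^{-m} + t^a$.

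To this end I would split $A$ further into the four mutually non-interacting subcomplexes singled out by the formulas of Proposition~\ref{prop:notecpx}: the line $\langle c_{0,0}\rangle$, the family $\{m_{0,j}\}_{j=1}^k$, the family $\{c_{j,0}\}_{j=1}^k$, and the family $\{c_{0,j}, M_{0,j}\}_{j=1}^k$. The generator $c_{0,0}$ is a cycle that is excluded from every right-hand side, hence it is not a boundary and survives in degree $n$, contributing $t^n$. The last family is acyclic: the degree $+1$ map defined by $h(M_{0,j}) = c_{0,j}$ and $h(c_{0,j}) = 0$ is a contracting homotopy, since $\partial c_{0,j} = M_{0,j} + \overline{j}\,c_{0,j-1}$ and $\partial M_{0,j} = \overline{j}\,M_{0,j-1}$ give $(\partial h + h\partial)(M_{0,j}) = M_{0,j}$ (the two copies of $\overline{j}\,c_{0,j-1}$ cancelling over $\Z_2$) and $(\partial h + h\partial)(c_{0,j}) = c_{0,j}$. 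This family therefore contributes nothing.

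For the two remaining families the differential merely multiplies the index-lowering by the parity $\overline{j}$, so each is a telescope whose homology is computed by cancelling consecutive pairs. In $\{m_{0,j}\}$, where $\partial m_{0,j} = \overline{j}\,m_{0,j-1}$ with $m_{0,0}=0$, the generator $m_{0,1}$ is always an unkilled cycle of degree $-m$, while the top generator $m_{0,k}$ survives precisely when $k$ is even, in degree $k-m-1$. Symmetrically, in $\{c_{j,0}\}$, where $\partial c_{j,0} = \overline{j}\,c_{j+1,0}$ with $c_{k+1,0}=0$, all generators cancel in pairs when $k$ is even, whereas the top generator $c_{k,0}$ survives when $k$ is odd, in degree $n-k+m$. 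Collecting $t^n$, $t^{-m}$, the acyclic family, and the single extra class gives $P_{H_*(A)}(t) = t^n + t^{-m} + t^{k-m-1}$ for $k$ even and $t^n + t^{-m} + t^{n-k+m}$ for $k$ odd, which is exactly $t^n + t^{-m} + t^a$ with $a$ as in~\eqref{eq:exponent}. Adding the contribution $1$ of the complementary summand completes the proof.

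I expect the main obstacle to be the justification of the direct-sum splitting, namely the holomorphic-disk (equivalently Morse-flow-tree) argument that the differential of $\widetilde{\Lambda}^{(2N)}$ is left untouched and that no $\Lambda_0$-chord is ever produced from a chord of $\widetilde{\Lambda}^{(2N)}$. Once this structural point is secured, the computation of $H_*(A)$ reduces to the explicit contraction of $\{c_{0,j}, M_{0,j}\}$ together with routine parity bookkeeping in the two telescopes, and the dependence of the surviving top class on the parity of $k$ is precisely what produces the two cases in~\eqref{eq:exponent}.
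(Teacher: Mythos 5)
Your proof is correct and follows essentially the same route as the paper: it computes the homology of the $\Lambda_0$-subcomplex from Proposition~\ref{prop:notecpx} family by family (with the same parity bookkeeping for the two telescopes and the acyclicity of $\{c_{0,j},M_{0,j}\}$) and adds the contribution of $\widetilde{\Lambda}^{(2N)}$ from Proposition~\ref{prop:polyNtilde}. Your explicit justification that the non-$\Lambda_0$ chords also span a subcomplex carrying the unchanged differential (via the two-mixed-punctures argument) is a point the paper leaves implicit, and is a welcome addition.
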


\begin{proof}
Let us compute the homology of the subcomplex generated by all Reeb chords involving the component $\Lambda_0$.
First note that $c_{0,0}$ is always a generator in homology, leading to the term $t^n$ in the Poincar\'e polynomial. Moreover, 
the complex generated by the chords $c_{0,1}, \ldots, c_{0,k}$ and $M_{0,1}, \ldots, M_{0,k}$ is acyclic.

If $k$ is even, the complex generated by the chords $c_{1,0}, \ldots, c_{k,0}$ is acyclic. On the other hand, the complex 
generated by the chords $m_{0,1}, \ldots, m_{0,k}$ has its homology generated by $m_{0,1}$ and $m_{0,k}$. These lead 
to the terms $t^{-m}$ and $t^{k-m-1}$ in the Poincar\'e polynomial.

If $k$ is odd, the complex generated by the chords $c_{1,0}, \ldots, c_{k,0}$ has its homology generated by $c_{k,0}$.
This leads to the term $t^{n-k+m}$ in the Poincar\'e polynomial. On the other hand, the complex 
generated by the chords $m_{0,1}, \ldots, m_{0,k}$ has its homology generated by $m_{0,1}$. This leads to the term $t^{-m}$ 
in the Poincar\'e polynomial.

Adding these contributions to the Poincar\'e polynomial of $\widetilde{\Lambda}^{(2N)}$ from Proposition~\ref{prop:polyNtilde},
we obtain the announced result.
\end{proof}

\begin{Rem}  \label{rmk:variantnote}
As a variant of the above construction, if we choose $\Lambda_0$ to be unlinked with $\Lambda_1$ in addition to
$\Lambda_{k+1}, \ldots, \Lambda_{2N}$, then we obtain instead the Poincar\'e polynomial $1 + t^n +  t^{n+m-2} + t^a$
with the same $a$ as in Proposition~\ref{prop:noteN}. This is because the subcomplex generated by all Reeb chords 
involving the component $\Lambda_0$ considered in the above proof does not contain the generators 
$c_{1,0}$ and $m_{0,1}$ anymore. Therefore, when $k$ is even its homology is generated by $c_{2,0}$ and $m_{0,k}$,
and when $k$ is odd it is generated by $c_{2,0}$ and $c_{k,0}$. Hence, in the Poincar\'e polynomial the exponent
$-m = |m_{0,1}|$ is replaced with $n+m-2 = |c_{2,0}|$.
\end{Rem}

The next step in our construction is to perform a connected sum between the component $\Lambda_0$ and the original knot 
$\widetilde{\Lambda}^{(2N)}$. This can be done after a Legendrian isotopy of $\Lambda_0$ similar to the one depicted in 
Figure~\ref{fig:connsum_highdim}, so that a piece of cusp in the deformed $\Lambda_0$ faces a piece of cusp from the component 
$\Lambda_1$. In this case, it will be necessary to use a different number of first Reidemeister moves as in Figure~\ref{fig:trefoilnote} before spinning 
the resulting front, so that the Maslov potentials near the facing cusps agree. We denote by $\overline{\Lambda}^{(2N)}_{(k,m)}$ the resulting 
Legendrian knot in $J^1(\R^n)$. We denote by $\overline{\varepsilon}_L$ and $\overline{\varepsilon}_R$ the augmentations induced from 
$\widetilde{\varepsilon}_L$ and $\widetilde{\varepsilon}_R$ via the exact Lagrangian cobordism between $\overline{\Lambda}^{(2N)}_{(k,m)}$ and 
$\widetilde{\Lambda}^{(2N)}_{(k,m)}$.

\begin{Prop} \label{prop:noteNconn}
Consider the Legendrian knot $\overline{\Lambda}^{(2N)}_{(k,m)} \subset J^1(\R^n)$. We have
$$
P_{\overline{\Lambda}^{(2N)}_{(k,m)}, \overline{\varepsilon}_L, \overline{\varepsilon}_R}(t) = 1 + t^{-m} + t^a,
$$
where $a$ is given by \eqref{eq:exponent}.
\end{Prop}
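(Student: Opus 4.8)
The plan is to apply Proposition~\ref{prop:connsum} to the connected sum producing $\overline{\Lambda}^{(2N)}_{(k,m)}$ from $\widetilde{\Lambda}^{(2N)}_{(k,m)}$, whose Poincar\'e polynomial $1 + t^n + t^{-m} + t^a$ was computed in Proposition~\ref{prop:noteN}. The link $\widetilde{\Lambda}^{(2N)}_{(k,m)}$ has exactly two connected components, namely the sphere $\widetilde{\Lambda}^{(2N)}$ and the additional unknot $\Lambda_0$, and $\overline{\Lambda}^{(2N)}_{(k,m)}$ is obtained by performing a connected sum between them. According to Proposition~\ref{prop:connsum}, the effect on the Poincar\'e polynomial is to add $t^{n-1}$ when the map $\tau_{n,\Lambda_0} - \tau_{n,\widetilde{\Lambda}^{(2N)}}$ vanishes, and to subtract $t^n$ otherwise. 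Since the announced polynomial is obtained from $1 + t^n + t^{-m} + t^a$ precisely by subtracting $t^n$, it suffices to show that this map does not vanish.

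First I would identify the relevant homology class. By the proof of Proposition~\ref{prop:noteN}, the group $LCH^{\widetilde{\varepsilon}_L, \widetilde{\varepsilon}_R}_n(\widetilde{\Lambda}^{(2N)}_{(k,m)})$ is one-dimensional and generated by the class $[c_{0,0}]$ of the unique degree $n$ self-chord of the unknot component $\Lambda_0$. Since $H_n(\widetilde{\Lambda}^{(2N)}_{(k,m)}) = \Z_2[\Lambda_0] \oplus \Z_2[\widetilde{\Lambda}^{(2N)}]$, it is enough to evaluate the two components $\tau_{n,\Lambda_0}([c_{0,0}])$ and $\tau_{n,\widetilde{\Lambda}^{(2N)}}([c_{0,0}])$, using the description of $\tau_n$ from the proof of Proposition~\ref{prop:connsum} as a count of rigid disks with a positive puncture at $c_{0,0}$, a marked point on the relevant component, and negative punctures augmented by $\widetilde{\varepsilon}_L$ and $\widetilde{\varepsilon}_R$ according to their position.

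Next I would treat the term $\tau_{n,\widetilde{\Lambda}^{(2N)}}([c_{0,0}])$. A disk contributing to it has a marked point on $\widetilde{\Lambda}^{(2N)}$, so its boundary must leave the component $\Lambda_0$; this forces at least one negative puncture at a mixed Reeb chord between $\Lambda_0$ and $\widetilde{\Lambda}^{(2N)}$. By construction all such mixed chords are unaugmented for both $\widetilde{\varepsilon}_L$ and $\widetilde{\varepsilon}_R$, so every such configuration contributes zero and $\tau_{n,\widetilde{\Lambda}^{(2N)}}([c_{0,0}]) = 0$. For the term $\tau_{n,\Lambda_0}([c_{0,0}])$, the same argument shows that the only configurations with nonzero weight have boundary entirely on the standard unknot $\Lambda_0$ and carry no negative punctures, so that $c_{0,0}$ detects the fundamental class of $\Lambda_0$ exactly as the top chord $a_3$ detects the fundamental class of the unknot component in Example~\ref{ex:trefoil}. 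Hence $\tau_{n,\Lambda_0}([c_{0,0}]) = 1$, and therefore $(\tau_{n,\Lambda_0} - \tau_{n,\widetilde{\Lambda}^{(2N)}})([c_{0,0}]) = 1 \neq 0$.

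With the map $\tau_{n,\Lambda_0} - \tau_{n,\widetilde{\Lambda}^{(2N)}}$ shown to be nonzero, Proposition~\ref{prop:connsum} yields $P_{\overline{\Lambda}^{(2N)}_{(k,m)}, \overline{\varepsilon}_L, \overline{\varepsilon}_R}(t) = (1 + t^n + t^{-m} + t^a) - t^n = 1 + t^{-m} + t^a$, as claimed. The delicate point of this argument is the identification $\tau_{n,\Lambda_0}([c_{0,0}]) = 1$: one must argue that the top chord of the unknot still represents its fundamental class inside the link, even though the presence of the other components modifies the full differential of $c_{0,0}$. I expect to handle this by a locality argument, observing that any disk counted by $\tau_{n,\Lambda_0}$ with nonzero bilinearized weight cannot involve the unaugmented mixed chords and is hence confined to a neighborhood of $\Lambda_0$, where the computation reduces to that of the standard unknot as in Example~\ref{ex:trefoil}.
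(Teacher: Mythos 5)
Your proposal is correct and follows essentially the same route as the paper: identify $[c_{0,0}]$ as representing the fundamental class of the component $\Lambda_0$ (the paper asserts this directly, while you justify it by noting that the unaugmented mixed chords kill any contribution to $\tau_{n,\widetilde{\Lambda}^{(2N)}}$ and localize the count to $\Lambda_0$), then apply Proposition~\ref{prop:connsum} to subtract $t^n$. One small caveat: your claim that $LCH^{\widetilde{\varepsilon}_L,\widetilde{\varepsilon}_R}_n$ is one-dimensional can fail when $a=n$ or $-m=n$, but this is harmless since nonvanishing of $\tau_{n,\Lambda_0}-\tau_{n,\widetilde{\Lambda}^{(2N)}}$ on the single class $[c_{0,0}]$ is all that Proposition~\ref{prop:connsum} requires.
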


\begin{proof}
By Proposition~\ref{prop:noteN}, the generator 
$[c_{0,0}] \in LCH_n^{\widetilde{\varepsilon}_L, \widetilde{\varepsilon}_R}(\widetilde{\Lambda}^{(2N)}_{(k,m)})$
corresponds to the fundamental class $[\Lambda_0]$ of the component $\Lambda_0$ of the Legendrian link 
$\widetilde{\Lambda}^{(2N)}_{(k,m)}$. By Proposition~\ref{prop:connsum}, the effect of the connected sum with
this component is to remove the term $t^n$ from the Poincar\'e polynomial, so that we obtain the announced result.
\end{proof}

Note that, instead of adding a single component $\Lambda_0$ to the Legendrian knot $\widetilde{\Lambda}^{(2N)}$, we can add
a collection of components $\Lambda_{0,1}, \ldots, \Lambda_{0,r} \subset J^1(\R^n)$ with similar properties. More precisely, for 
all $i = 1, \ldots, r$, $\Lambda_{0,i}$ forms with the bottom $k_i$ components $\Lambda_1, \ldots, \Lambda_{k_i}$ a Legendrian 
link isotopic to the $k_i+1$-copy of the standard Legendrian unknot, but the projection of $\Lambda_{0,i}$  to $\R^n$ is disjoint from
the projection of the other components $\Lambda_{k_i+1}, \ldots, \Lambda_{2N}$. The Maslov potential of $\Lambda_{0,i}$ is fixed as
the Maslov potential of $\Lambda_1$ plus $m_i - 1$, for some integer $m_i$. With $\overline{k} = (k_1, \ldots, k_r)$ and $\overline{m} 
= (m_1, \ldots, m_r)$, we denote the resulting Legendrian link by $\widetilde{\Lambda}^{(2N)}_{(\overline{k},\overline{m})}$.

The addition of $\Lambda_{0,1}, \ldots, \Lambda_{0,r}$ to $\widetilde{\Lambda}^{(2N)}$  is illustrated by Figure~\ref{fig:partitionseveralnotes} 
in the case $r=3$ and $\{ k_1, k_2, k_3 \} = \{ 1, 3, 4 \}$, where the picture zooms on the bottom strata of the $k$ 
components $\Lambda_1, \ldots, \Lambda_k$, which are represented as portions of horizontal planes.

\begin{figure}
  \centerline{\includegraphics[width=7cm]{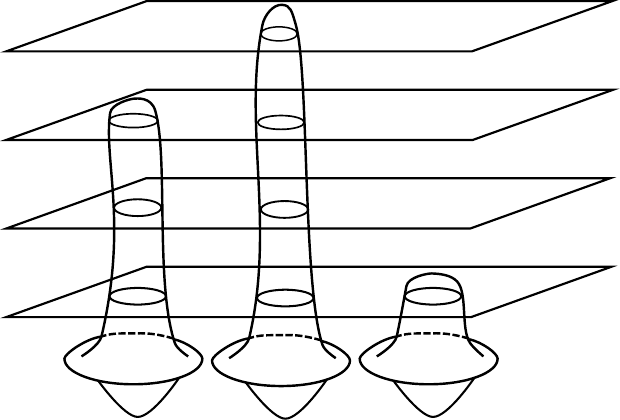}}
  \caption{Additional components $\Lambda_{0,i}$ with $r=3$ and $\{ k_1, k_2, k_3 \} = \{ 1, 3, 4 \}$.}
  \label{fig:partitionseveralnotes}
\end{figure}

Each additional component $\Lambda_{0,i}$ gives rise to an additional subcomplex in the bilinearized complex as in 
Proposition~\ref{prop:notecpx}, hence to additional terms in the Poincar\'e polynomial of the form $t^n + t^{-m_i} + t^{a_i}$
with $a_i$ given by~\eqref{eq:exponent}. After the connected sum of these components with $\widetilde{\Lambda}^{(N)}$,
we obtain a Legendrian knot $\overline{\Lambda}^{(2N)}_{(k,m)}$ and, arguing as in Proposition~\ref{prop:noteNconn}, its
Poincar\'e polynomial is given by
\begin{equation}  \label{eq:polysphere}
P_{\overline{\Lambda}^{(2N)}_{(\overline{k},\overline{m})}, \overline{\varepsilon}_L, \overline{\varepsilon}_R}(t) = 
1 + \sum_{i=1}^r (t^{-m_i} + t^{a_i}).
\end{equation}

At this point of our constructions we realized the geography of bLCH for Legendrian spheres $\Lambda$.

\begin{Thm}  \label{thm:geogsphere}
Let $P = q+p$ be the sum of Laurent polynomials with nonnegative integral coefficients satisfying conditions
(i') and (ii') from Remark~\ref{rmk:admsphere}. Then there exists a Legendrian sphere $\Lambda$ in $J^1(\R^n)$ 
and two non DGA homotopic augmentations $\varepsilon_1, \varepsilon_2$ of the Chekanov-Eliashberg DGA of $\Lambda$, 
with the property that the Poincar\'e polynomial of $LCH^{\varepsilon_1, \varepsilon_2}(\Lambda)$ with coefficients 
in $\Z_2$ is equal to $P$.
\end{Thm}

\begin{proof}
Let us show that the Poincar\'e polynomials obtained in \eqref{eq:polysphere} realize all polynomials 
$P = q + p$ satisfying conditions (i') and (ii').

Indeed, let $q(t)=1$ and $p$ be a Laurent polynomial satisfying the above condition (ii').
If $n$ is even, $p(-1)=0$ so that the polynomial $p$ can be expressed as a sum of polynomials of the form 
$\sum_{i=1}^r (t^{u_i} + t^{v_i})$, where $u_i < v_i$ have different parities. If $n$ is odd, $p(-1)$ is even, so that 
the polynomial $p$ can be expressed as the sum of polynomials of the form $\sum_{i=1}^r (t^{u_i} + t^{v_i})$, with no 
parity conditions on $u_i$ and $v_i$. 

In order to realize the polynomial $t^{u_i} + t^{v_i}$ when $u_i$ and $v_i$ have different parities, 
we can choose $m_i = - u_i$ and $k_i = v_i - u_i + 1$, which is even.
When $u_i$ and $v_i$ have the same parity, which can happen only if $n$ is odd, we proceed as follows.
If $u_i + v_i \le n-1$, we can choose $m_i= - u_i$ and $k_i = n - u_i - v_i$, which is odd.
If $u_i + v_i \ge n-1$, we use the variant of the construction with $\Lambda_0$ described in Remark~\ref{rmk:variantnote}
with $m_i= u_i+2-n$ and $k_i = u_i+v_i+3-n$, which is even.

Let us define $\overline{k} = (k_1, \ldots, k_r)$ and $\overline{m}=(m_1, \ldots, m_r)$, and let $N$ be the smallest
even integer such that $k_i \le 2N$ for all $i = 1, \ldots, r$. Then, in view of \eqref{eq:polysphere}, the Legendrian sphere
$\overline{\Lambda}^{(2N)}_{(\overline{k},\overline{m})}$ satisfies 
$$
P_{\overline{\Lambda}^{(2N)}_{(\overline{k},\overline{m})}, \overline{\varepsilon}_L, \overline{\varepsilon}_R}(t) = 1 + p(t)
= q(t) + p(t)
$$
as desired.
\end{proof}

%
%
\subsection{Geography of bLCH for general Legendrian submanifolds}

In order to obtain Poincar\'e polynomials with all possible polynomials $q$ satisfying condition (i) from Definition~\ref{def:bLCHadm}, 
we use the following construction from~\cite[Corollary~6.7]{BST}.

\begin{Prop}  \label{prop:embsurg}
For any monic polynomial $\overline{q}$ of degree $n$ satisfying $\overline{q}(0)=0$, there exists a connected 
Legendrian submanifold $\Lambda_{\overline{q}} \subset J^1(\R^n)$ equipped with an augmentation $\varepsilon$
such that $P_{\Lambda_{\overline{q}}, \varepsilon} = \overline{q}$.
\end{Prop}

If $q$ is a polynomial satisfying condition (i) from Definition~\ref{def:bLCHadm}, then the polynomial $\overline{q}$ given by
$\overline{q}(t) = q(t) + t^n - 1$ satisfies the assumptions of Proposition~\ref{prop:embsurg}.

Let $\Lambda^{(2N)}_{\overline{q}, (\overline{k},\overline{m})}$ be the disjoint union of the Legendrian knots 
$\overline{\Lambda}^{(2N)}_{(\overline{k},\overline{m})}$ and $\Lambda_{\overline{q}}$,
such that the projection of these components to $\R^n$ are disjoint. We denote by $\widehat{\varepsilon}_L$ and $\widehat{\varepsilon}_R$ the
augmentations for $\Lambda^{(2N)}_{\overline{q}, (\overline{k},\overline{m})}$ induced by the augmentation $\varepsilon$ for 
$\Lambda_{\overline{q}}$ and the augmentations $\overline{\varepsilon}_L$
and $\overline{\varepsilon}_R$ for $\overline{\Lambda}^{(2N)}_{(\overline{k},\overline{m})}$. The Poincar\'e polynomial of 
$\Lambda^{(2N)}_{\overline{q}, (\overline{k},\overline{m})}$ is given by the sum of the Poincar\'e polynomials of its components:
$$
P_{\Lambda^{(2N)}_{\overline{q}, (\overline{k},\overline{m})}, \overline{\varepsilon}_L, \overline{\varepsilon}_R}(t) 
= t^n + q(t) + \sum_{i=1}^r (t^{-m_i} + t^{a_i}).
$$
We then perform a connected sum on the Legendrian link $\Lambda^{(2N)}_{\overline{q}, (\overline{k},\overline{m})}$ 
in order to obtain a Legendrian knot $\widetilde{\Lambda}^{(2N)}_{\overline{q}, (\overline{k},\overline{m})}$,
equipped with two augmentations still denoted by $\widehat{\varepsilon}_L$ and $\widehat{\varepsilon}_R$. Since the augmentations 
$\overline{\varepsilon}_L$ and  $\overline{\varepsilon}_R$ coincide (with $\varepsilon$) on the component $\Lambda_{\overline{q}}$, 
by Proposition~\ref{prop:tau}
the fundamental class $[\Lambda_{\overline{q}}]$ of this component is in the image of the map $\tau_n$ in the duality exact sequence~\eqref{eq:bilindualityseq}. By Proposition~\ref{prop:connsum}, the effect of the connected sum with $\Lambda_{\overline{q}}$ is to remove 
a term $t^n$ from the Poincar\'e polynomial. We therefore obtain 
$$
P_{\Lambda^{(2N)}_{\overline{q}, (\overline{k},\overline{m})}, \overline{\varepsilon}_L, \overline{\varepsilon}_R}(t) 
= q(t) + \sum_{i=1}^r (t^{-m_i} + t^{a_i}).
$$

Although these Poincar\'e polynomials realize all polynomials $q$ satisfying condition (i) from Definition~\ref{def:bLCHadm}, we are still missing
some Laurent polynomials $p$, since these can be arbitrary when $n > 2$. In order to realize these more general Laurent polynomials $p$, we
 describe a generalization of the embedded surgery construction on which Proposition~\ref{prop:embsurg} and its proof in~\cite[Corollary~6.7]{BST}
 are based.

From now on, assume that $n \ge 2$.
Consider a point on the cusp locus of the component $\Lambda_1$ of the $2N$-copy of the standard Legendrian unknot 
$\Lambda^{(2N)} \subset J^1(\R^n)$. By a Legendrian isotopy, it is always possible to arrange so that in a neighborhood
of this point, the front of $\Lambda^{(2N)}$ in $J^0(\R^n)$ with local coordinates $(x_1, \ldots, x_n, z)$ is locally described 
as follows: the fragment of $\Lambda_1$ in this neighborhood is composed of a bottom stratum $z=0$ and of a top stratum
satisfying $z^2 = x_n^3$, both for $x_n \ge 0$. Moreover, the fragments of the bottom strata of the components $\Lambda_i$ 
in this neighborhood satisfy $z = (i-1)\varepsilon$ for $i=2, \ldots, 2N$, and no other parts of the front of $\Lambda^{(2N)}$ lie 
in this neighborhood. Note that it is possible to arrange so that this local model still holds
for the more sophisticated Legendrian $\Lambda^{(2N)}_{\overline{q}, (\overline{k},\overline{m})}$ after our above constructions.

For a given $m' \in \{ 0, \ldots, n-2 \}$, we consider an embedded sphere $S^{m'}$ of dimension $m'$ in the cusp locus 
$\{ x_n = z = 0 \}$ of $\Lambda_1$. In view of our assumptions on the front of $\Lambda^{(2N)}$, this sphere bounds an
embedded disk of dimension $m'+1$ with its interior disjoint from the front of $\Lambda^{(2N)}$.
For a given $k' \in \{ 2, \ldots, 2N \}$, we define a function $f$ on the cusp locus of
$\Lambda_1$, equal to $((k'+\frac23)\varepsilon)^{2/3}$ along $S^{m'}$, given by 
$\frac{((k'+\frac23)\varepsilon)^{2/3}}{r_0^{1/2}}\sqrt{r_0-r}$ 
at distance $r \in (0, r_0]$ from $S^{m'}$ and extended by $0$ everywhere else. We remove from the front of $\Lambda_1$
the region satisfying $x_n < f(x_1, \ldots, x_{n-1})$; the resulting front has boundary diffeomorphic to the cartesian product
of $S^{m'}$ with a standard Legendrian sphere of dimension $n-m'-1$, with a flat bottom stratum. We now perform an $m'$-surgery 
on $\Lambda^{(2N)}$ by attaching a standard Legendrian handle diffeomorphic to $D^{m'+1} \times S^{n-m'-1}$ to the above front 
along its boundary.
By construction, along the boundary of this handle, the standard Legendrian sphere of dimension $n-m'-1$ has height 
$(k'+\frac23)\varepsilon$. We shape the handle so that this height decreases monotonically from the boundary of
$D^{m'+1}$ to its center, where it takes the minimal value $(k'+\frac13)\varepsilon$. This is a standard Legendrian
surgery on $\Lambda_1$, but it is of a more general nature if we consider the whole $\Lambda^{(2N)}$, since the front of the
attached handle intersects the front of the components $\Lambda_2, \ldots, \Lambda_{k'+1}$ (but not of the components 
$\Lambda_{k'+2}, \ldots, \Lambda_{2N}$). When this operation is performed on the Legendrian submanifold 
$\Lambda^{(2N)}_{\overline{q}, (\overline{k},\overline{m})}$, we denote the resulting Legendrian submanifold by 
$\Lambda^{(2N)}_{\overline{q}, (\overline{k},\overline{m}), (k',m')}$.

\begin{figure}
\labellist
\small\hair 2pt
\pinlabel {$\Lambda_1$} [bl] at 33 0
\pinlabel {$\Lambda_2$} [bl] at -10 35
\pinlabel {$\Lambda_j$} [bl] at -10 80
\pinlabel {$\Lambda_{k'+1}$} [bl] at -10 125
\pinlabel {$c'_{1,1}$} [bl] at 140 98
\pinlabel {$c'_{1,j}$} [bl] at 90 56
\pinlabel {$c'_{j,1}$} [bl] at 100 98
\endlabellist
  \centerline{\includegraphics[width=7cm]{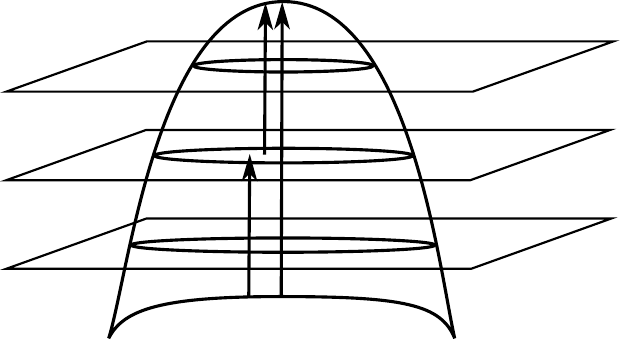}}
  \caption{Center of a generalized handle.}
  \label{fig:slicehandle}
\end{figure}

In order to minimize the number of Reeb chords created by this operation, we shape the standard Legendrian sphere of 
dimension $n-m'-1$ as shown in Figure~\ref{fig:slicehandle}, with both top and bottom strata being the graphs of concave
functions. Assuming for simplicity that the minima of the perturbing Morse functions $f_i-f_j$ for $i \neq j$ are located in the 
bottom strata and that the corresponding maxima are located in the top strata, the bottom strata of the $\Lambda_i$ are 
slightly moving away from each other in the $z$-direction as $x_n$ decreases to $0$. Hence, the bottom stratum of the 
standard Legendrian sphere of 
dimension $n-m'-1$ is slightly moving down from the boundary of $D^{m'+1}$ to its center. In particular, all new Reeb chords
are located very close to the center of the handle: $c'_{1,1}$ with endpoints on the handle, $c'_{1,j}$ from the handle to 
$\Lambda_j$ and $c'_{j,1}$ from $\Lambda_j$ to the handle, for $j = 2, \ldots, k'+1$, as shown in Figure~\ref{fig:slicehandle}.
On the other hand, we can perturb the resulting Legendrian submanifold so that there are no Reeb chords between the 
attached handle and the components $\Lambda_{k'+2}, \ldots, \Lambda_{2N}$.
Summarizing, the gradings and lengths of the new Reeb chords are given by
\begin{center}
\begin{tabular}{c|cc}
& grading & length \\
\hline
$c'_{1,1}$ & $n-m'-1$ & $(k'+\frac13)\varepsilon$ \\
$c'_{j,1}$ & $n-m'-j$ & $(k'-j+\frac43)\varepsilon$ \\
$c'_{1,j}$ & $m'+j-1$ & $(j-1)\varepsilon$ 
\end{tabular}
\end{center}

\begin{Prop}  \label{prop:diffhandle}
The augmentations $\widehat{\varepsilon}_L$ and $\widehat{\varepsilon}_R$ can be extended by zero on the new chords
to augmentations of $\Lambda^{(2N)}_{\overline{q}, (\overline{k},\overline{m}), (k',m')}$. The vector space spanned by the
new chords $c'_{1,1}$, $c'_{1,j}$ and $c'_{j,1}$ is a subcomplex with respect to the bilinearized differentials 
$\partial^{\widehat{\varepsilon}_L, \widehat{\varepsilon}_R}$ and $\partial^{\widehat{\varepsilon}_R, \widehat{\varepsilon}_L}$.
These differentials are given by
\begin{eqnarray*}
\partial^{\widehat{\varepsilon}_L, \widehat{\varepsilon}_R} c'_{1,j+1} &=& \overline{j+1} \ c'_{1,j}, \\
\partial^{\widehat{\varepsilon}_L, \widehat{\varepsilon}_R} c'_{j,1} &=& \overline{j} \ c'_{j+1,1}, \\
\partial^{\widehat{\varepsilon}_L, \widehat{\varepsilon}_R} c'_{k'+1,1} &=& 0,
\end{eqnarray*}
and respectively by
\begin{eqnarray*}
\partial^{\widehat{\varepsilon}_R, \widehat{\varepsilon}_L} c'_{1,j+1} &=& 
\left\{ \begin{array}{ll} 
\overline{j} \ c'_{1,j} & \textrm{if } j \neq 1, \\
0 & \textrm{if } j = 1,
\end{array} \right. \\
\partial^{\widehat{\varepsilon}_R, \widehat{\varepsilon}_L} c'_{j,1} &=& \overline{j+1} \ c'_{j+1,1}, \\
\partial^{\widehat{\varepsilon}_R, \widehat{\varepsilon}_L} c'_{k'+1,1} &=& 0,
\end{eqnarray*}
for $j = 1, \ldots, k'$, where $\overline{j}$ is the modulo $2$ reduction of $j$.
\end{Prop}

\begin{proof}
We first need to show that $\widehat{\varepsilon}_L \circ \partial c = \widehat{\varepsilon}_R \circ \partial c = 0$ 
for any Reeb chord $c$ of $\Lambda^{(2N)}_{\overline{q}, (\overline{k},\overline{m}), (k',m')}$. If $c$ is a Reeb
chord of $\Lambda^{(2N)}_{\overline{q}, (\overline{k},\overline{m})}$, then $\partial c$ consists of terms from
the differential for $\Lambda^{(2N)}_{\overline{q}, (\overline{k},\overline{m})}$, hence in the kernel of 
$\widehat{\varepsilon}_L$ and $\widehat{\varepsilon}_R$, and of terms involving at least one new chord of 
$\Lambda^{(2N)}_{\overline{q}, (\overline{k},\overline{m}), (k',m')}$. Since $\widehat{\varepsilon}_L$ and 
$\widehat{\varepsilon}_R$ vanish on these new chords, we obtain the desired relations.

If $c$ is a new chord of $\Lambda^{(2N)}_{\overline{q}, (\overline{k},\overline{m}), (k',m')}$, we claim that
any term in $\partial c$ contains an unaugmented chord as a factor, and hence is in the kernel of 
$\widehat{\varepsilon}_L$ and $\widehat{\varepsilon}_R$. Indeed, the only augmented chords go from
$\Lambda_j$ to $\Lambda_{j+1}$, with a parity condition on $j$ depending on the augmentation. Moreover,
Morse flow trees cannot entirely go across a connecting tube (since they are attracted to its center) so that
chords are the only way to jump from $\Lambda_i$ to $\Lambda_j$ with $i \neq j$. Since the new chords have
at least one endpoint on $\Lambda_1$, if a Morse flow tree has all negative ends at augmented chords, 
it must start at $c'_{1,1}$ or at $c'_{1,2}$. But $|c'_{1,1}| = n-m'-1$ equals $1$ if and only if $m'=n-2$, and
in that case a Morse flow tree with endpoints remaining on $\Lambda_1$ must remain in the center of the
handle, which is a $1$-dimensional standard Legendrian knot, so that there are $2$ such Morse flow trees
with no negative end, canceling each other. On the other hand, $|c'_{1,2}| = m'+1$ equals $1$ if and only if 
$m'=0$, and in that case a Morse flow tree with endpoints remaining on $\Lambda_1$ and $\Lambda_2$ must
connect the critical point $c'_{1,2}$ of $f_2 - f_1$ of index $1$ to the critical point $m_{1,2}$ of $f_2 - f_1$ of 
index $0$. There are $2$ such Morse flow trees, corresponding to the $2$ sides of the $1$-dimensional 
unstable manifold of $c'_{1,2}$, and these cancel each other.

Let us now compute the bilinearized differentials.
If a rigid Morse flow tree starting at $c'_{1,j}$ with $j = 1, \ldots, k'+1$, has only one negative end, it will leave 
the handle radially and then flow to the minimum $m_{1,j}$ of $f_j - f_1$. Such a configuration is rigid if and 
only if $|m_{1,j}| = j-2 = |c'_{1,j}|-1= m' + j-2$, but when $m'=0$ there are $2$ such Morse flow trees as above, 
canceling each other. If it has more negative ends and contributes to the bilinearized differential of
$c'_{1,j}$, it can only have a negative end at $m_{j-1,j}$, 
and the other one must then be at $c'_{1,j-1}$. There is a unique such Morse flow tree, flowing from $c'_{1,j}$ to
the position of $c'_{1,j-1}$ in the $D^{m'+1}$-factor of the handle, then splitting at the bottom stratum of 
$\Lambda_{j-1}$, so that one part flows in the $S^{n-m'-1}$-factor of the handle to $c'_{1,j-1}$ and the other 
part flows to the minimum $m_{j-1,j}$ of $f_j - f_{j-1}$. This term $m_{j-1,j} c'_{1,j-1}$
gives rise to the term $c'_{1,j-1}$ in $\partial^{\widehat{\varepsilon}_L, \widehat{\varepsilon}_R} c'_{1,j}$
if and only if $\widehat{\varepsilon}_L (m_{j-1,j}) = 1$, i.e. when $j$ is odd and $> 1$. It gives rise to the
term $c'_{1,j-1}$ in $\partial^{\widehat{\varepsilon}_R, \widehat{\varepsilon}_L} c'_{1,j}$ if and only if 
$\widehat{\varepsilon}_R (m_{j-1,j}) = 1$, i.e. when $j$ is even.

Let us now consider a rigid Morse flow tree starting at $c'_{j,1}$ with $j = 2, \ldots, k'+1$. Such a tree cannot
have only one negative end, and if it contributes to the bilinearized differential of $c'_{j,1}$,
it must have two negative ends, one at $m_{j,j+1}$ and the other one at $c'_{j+1,1}$. There is a unique such 
Morse flow tree, flowing from $c'_{j,1}$ to
the position of $c'_{j+1,1}$ in the $S^{n-m'-1}$-factor of the handle, then splitting at the bottom stratum of 
$\Lambda_{j+1}$, so that one part flows in the $D^{m'+1}$-factor of the handle to $c'_{j+1,1}$ and the other 
part flows to the minimum $m_{j,j+1}$ of $f_{j+1} - f_j$. 
This term $c'_{j+1,1} m_{j,j+1}$ gives rise to the term $c'_{j+1,1}$ in 
$\partial^{\widehat{\varepsilon}_L, \widehat{\varepsilon}_R} c'_{j,1}$
if and only if $\widehat{\varepsilon}_R (m_{j,j+1}) = 1$, i.e. when $j$ is odd and $< k'+1$. It gives rise to the
term $c'_{j+1,1}$ in $\partial^{\widehat{\varepsilon}_R, \widehat{\varepsilon}_L} c'_{j,1}$ if and only if 
$\widehat{\varepsilon}_L (m_{j,j+1}) = 1$, i.e. when $j$ is even and $< k'+1$.
\end{proof}

As an immediate consequence of Proposition~\ref{prop:diffhandle}, the homology with respect to 
$\partial^{\widehat{\varepsilon}_L, \widehat{\varepsilon}_R}$ of the subcomplex 
generated by the new Reeb chords is generated by $[c'_{k'+1,1}]$ in degree $n-m'-k'-1$ if $k'$ is even, 
and by $[c'_{1,k'+1}]$ in degree $m'+k'$ if $k'$ is odd. Similarly, the homology with respect to 
$\partial^{\widehat{\varepsilon}_R, \widehat{\varepsilon}_L}$ of this subcomplex is generated by 
$[c'_{1,1}]$ in degree $n-m'-1$, $[c'_{1,2}]$ in degree $m'+1$, and by $[c'_{1,k'+1}]$ in degree 
$m'+k'$ if $k'$ is even, and by $[c'_{k'+1,1}]$ in degree $n-m'-k'-1$ if $k'$ is odd.

\begin{Prop}  \label{prop:effecthandle}
The bLCH Poincar\'e polynomials of $\Lambda^{(2N)}_{\overline{q}, (\overline{k},\overline{m}), (k',m')}$ are given by
$$
P_{\Lambda^{(2N)}_{\overline{q}, (\overline{k},\overline{m}), (k',m')}, \widehat{\varepsilon}_L, \widehat{\varepsilon}_R}(t) =
P_{\Lambda^{(2N)}_{\overline{q}, (\overline{k},\overline{m})}, \widehat{\varepsilon}_L, \widehat{\varepsilon}_R}(t)
+ t^{b}
$$
and by
$$
P_{\Lambda^{(2N)}_{\overline{q}, (\overline{k},\overline{m}), (k',m')}, \widehat{\varepsilon}_R, \widehat{\varepsilon}_L}(t) =
P_{\Lambda^{(2N)}_{\overline{q}, (\overline{k},\overline{m})}, \widehat{\varepsilon}_R, \widehat{\varepsilon}_L}(t)
+ t^{n-m'-1} + t^{m'+1} + t^{n-1-b}
$$
where $b = n-m'-k'-1$ if $k'$ is even and $b = m'+k'$ if $k'$ is odd.
\end{Prop}

\begin{proof}
Observe that the image of $[c'_{1,1}]$ by the map 
$$
\tilde{\tau}_{n-m'-1} : LCH_{n-m'-1}^{\widehat{\varepsilon}_R, \widehat{\varepsilon}_L}
(\Lambda^{(2N)}_{\overline{q}, (\overline{k},\overline{m}), (k',m')}) \to H_{n-m'-1}(\Lambda^{(2N)}_{\overline{q}, (\overline{k},\overline{m}), (k',m')})
$$ 
from the duality exact sequence~\eqref{eq:bilindualityseq2} is the homology class of the co-core sphere of the 
attached handle. Indeed, all Morse flow trees starting at $c'_{1,1}$ and with no negative end must remain in the 
co-core sphere of the handle, since the latter is narrowest there. The resulting Morse flow trees start at $c'_{1,1}$
in any direction and finish at the cusp of the co-core sphere. The boundary of the corresponding holomorphic disks 
foliate the co-core sphere minus the endpoints of $c'_{1,1}$ so that the image of the cycle $c'_{1,1}$ in the bilinearized 
complex is the cycle corresponding to the co-core sphere in the singular complex of 
$\Lambda^{(2N)}_{\overline{q}, (\overline{k},\overline{m}), (k',m')}$. 
Since the corresponding homology class does not vanish in 
$H_{n-m'-1}(\Lambda^{(2N)}_{\overline{q}, (\overline{k},\overline{m}), (k',m')})$, it follows that $[c'_{1,1}]$ does not vanish
in bilinearized homology either.

Similarly, observe that the image of $[c'_{1,2}]$ by the map 
$$
\tilde{\tau}_{m'+1} : LCH_{m'+1}^{\widehat{\varepsilon}_R, \widehat{\varepsilon}_L}
(\Lambda^{(2N)}_{\overline{q}, (\overline{k},\overline{m}), (k',m')}) \to H_{m'+1}(\Lambda^{(2N)}_{\overline{q}, (\overline{k},\overline{m}), (k',m')})
$$ 
from the duality exact sequence~\eqref{eq:bilindualityseq2} is the Poincar\'e dual of the homology class of the co-core 
sphere of the attached handle. Indeed, all Morse flow trees starting at $c'_{1,2}$ and with no negative end must follow
radii of the disk factor $D^{m'+1}$ for the handle. Once such a Morse flow tree exits the handle, it will flow down to the
chord $m_{1,2}$ corresponding the the minimum of the perturbing Morse function $f_2-f_1$. The chord $m_{1,2}$ is 
augmented for $\widehat{\varepsilon}_R$ so that the image by $\tilde{\tau}_{m'+1}$ is obtained by considering the part 
of the boundary of the corresponding holomorphic disks lying in $\Lambda_1$. This is a sphere of dimension $m'+1$,
intersecting the co-core sphere at the endpoint of $c'_{1,2}$ in $\Lambda_1$. Since the corresponding homology class 
does not vanish in $H_{m'+1}(\Lambda^{(2N)}_{\overline{q}, (\overline{k},\overline{m}), (k',m')})$, it follows that $[c'_{1,2}]$ 
does not vanish in bilinearized homology either.

In view of the long exact sequence relating the bilinearized homology of our subcomplex with the bilinearized homologies
of our Legendrian submanifold before and after the generalized handle attachment, the effect of $[c'_{k'+1,1}]$ or
$[c'_{1,k'+1}]$ could either be to add a term in the bLCH Poincar\'e polynomial in the degree of this generator, or to
remove a term in the degree of this generator, plus one.

In terms of Proposition~\ref{prop:relpoly}, we have just shown that the polynomial $\tilde{q}$ gains the terms 
$t^{n-m'-1} + t^{m'+1}$ as an effect of this generalized handle attachment. Since the dimension of the singular 
homology of the Legendrian submanifold increased by $2$, it follows that the modifications due to $[c'_{k'+1,1}]$ 
and $[c'_{1,k'+1}]$ are affecting the polynomials $p$ and $\tilde{p}$. Since the relation $\tilde{p}(t) = t^{n-1} p(t^{-1})$
must hold at all times, it follows that the changes to both bLCH Poincar\'e polynomials must occur in degrees that add
up to $n-1$. But since the sum of the gradings of $[c'_{k'+1,1}]$ and of $[c'_{1,k'+1}]$ is $n-1$, it follows that the effect 
of these generators is necessarily to add a term in their corresponding bLCH Poincar\'e polynomial.

Since the $4$ generators $[c'_{1,1}]$, $[c'_{1,2}]$, $[c'_{k'+1,1}]$ and $[c'_{1,k'+1}]$ each give rise to an additional
term in one of the bLCH Poincar\'e polynomials of $\Lambda^{(2N)}_{\overline{q}, (\overline{k},\overline{m}), (k',m')}$,
the announced relations follow.
\end{proof}

We can repeat the above generalized handle attachment as many times as we want, with different values of $k'$ 
and $m'$. Repeating it $s$ times with parameters $k'_i$ and $m'_i$, let us define $\overline{k'} = (k'_1, \ldots, k'_s)$ 
and $\overline{m'} = (m'_1, \ldots, m'_s)$, and after choosing $N$ so that $k'_i +1 \le 2N$ for all $i = 1, \ldots, s$.
Applying these operations on $\Lambda^{(2N)}_{\overline{q}, (\overline{k},\overline{m})}$, we denote the resulting 
Legendrian submanifold by $\Lambda^{(2N)}_{\overline{q}, (\overline{k},\overline{m}), (\overline{k'},\overline{m'})}$. 

\begin{Cor}  \label{cor:polyfinal}
The bLCH Poincar\'e polynomial of $\Lambda^{(2N)}_{\overline{q}, (\overline{k},\overline{m}), (\overline{k'},\overline{m'})}$ 
is given by
$$
P_{\Lambda^{(2N)}_{\overline{q}, (\overline{k},\overline{m}), (\overline{k'},\overline{m'})}, \widehat{\varepsilon}_L, \widehat{\varepsilon}_R}(t) 
= q(t) + \sum_{i=1}^r (t^{-m_i} + t^{a_i}) + \sum_{i=1}^s t^{b_i},
$$
where 
$$
a_i = \left\{ \begin{array}{ll}
k_i-m_i-1 & \textrm{if } k_i \textrm{ is even,} \\
n-k_i+m_i & \textrm{ if } k_i \textrm{ is odd,}
\end{array} \right.
$$
and 
$$
b_i =  \left\{ \begin{array}{ll}
n-k'_i-m'_i-1 & \textrm{if } k'_i \textrm{ is even,} \\
k'_i+m'_i & \textrm{ if } k'_i \textrm{ is odd.}
\end{array} \right.
$$
\end{Cor}

\begin{proof}[Proof of Theorem~\ref{thm:B}]
Note that if $n=1$, any connected Legendrian submanifold $\Lambda$ is a circle. Since we already showed that
the bLCH geography for spheres is realized by the submanifolds $\overline{\Lambda}^{(2N)}_{(\overline{k},\overline{m})}$
with Poincar\'e polynomial given by~\eqref{eq:polysphere} with $q(t) =1$ and $p(-1)$ even, we can assume that $n \ge 2$.

Assume first that $n > 2$. Let $q + p$ is a bLCH-admissible polynomial in the sense of Definition~\ref{def:bLCHadm}. 
Writing $p(t) = \sum_{i=1}^s t^{w_i}$, for any term $i  = 1, \ldots, s$, we can find $k'_i \ge 1$ and $0 \le m'_i \le n-2$ 
such that $b_i = w_i$ as in
Corollary~\ref{cor:polyfinal}: if $w_i > 0$ is odd, we can choose $m'_i=0$  and $k'_i = w_i$, if $w_i > 0$ is even, we
can choose $m'_i=1$ and $k'_i = w_i - 1$, if $w_i \le 0$ has the same parity as $n$, we can choose $m'_i=1$ and
$k'_i = n-2-w_i$, and if $w_i \le 0$ has the same parity as $n-1$, we can choose $m'_i=0$ and $k'_i = n-1-w_i$.
Then the Legendrian submanifold $\Lambda^{(2N)}_{\overline{q}, (\overline{k'},\overline{m'})}$ has the desired 
bLCH Poincar\'e polynomial $q+p$.

Finally, in the case $n=2$, we cannot use the above choices of parameters since we must have $m'_i =0$ for all 
$i = 1, \ldots, s$. Let $q + p$ is a bLCH-admissible polynomial in the sense of Definition~\ref{def:bLCHadm}. 
Let us decompose $p$ as $p_0 + p_1$ where $p_0$ and $p_1$ are Laurent polynomials with nonnegative integral 
coefficients, $p_0(-1)=0$ and $p_1(1)$ is minimal with respect to these properties.
We have already showed that there exists a Legendrian sphere $\overline{\Lambda}^{(2N)}_{(\overline{k},\overline{m})}$
with bLCH Poincar\'e polynomial given by $1 + p_0$ in view of~\eqref{eq:polysphere}. Since $p_1(1)$ is minimal, it follows
that all terms in $p_1$ have degrees of the same parity. 

If this parity is odd, all terms in $p_1$ are of the form $t^{w_i}$ with $w_i$ odd. If $w_i \ge 1$, we choose 
$k'_i = w_i$ odd, and if $w_i \le -1$, we choose $k'_i = 1- w_i$ even as in Corollary~\ref{cor:polyfinal}. 
Therefore, using as many generalized handle attachments as
needed, we can realize the bLCH Poincar\'e polynomial $1+ p_0 + p_1$, regardless of the value of $p_1(-1) \le 0$. 
Then, by a connected sum with the Legendrian submanifold $\Lambda_{\overline{q}}$ from Proposition~\ref{prop:embsurg},
we realize the bLCH Poincar\'e polynomial $q + p$ as desired.

If the terms in $p_1$ have degrees of even parity, we use generalized handle attachments on $\Lambda_2$ instead 
of $\Lambda_1$: the effect of this modified operation will be as described by Proposition~\ref{prop:effecthandle}, 
with the ordering of the augmentations reversed. In other words, each such generalized handle attachment will 
add $2t + t^{1-b_i}$ to the bLCH Poincar\'e polynomial of our Legendrian submanifold, with $1-b_i = 1 - k'_i$ even
as in Corollary~\ref{cor:polyfinal}. If $q(t) = 1 + at$ then we 
can perform up to $\lfloor \frac{a}2 \rfloor$ such attachments. Therefore, for any polynomial $p_1$ such that 
$p_1(-1) = p_1(1) \le \frac{a}2 = \frac12  (1 - q(-1))$, we can realize the bLCH Poincar\'e polynomial 
$1 + 2p_1(1) t + p_0 + p_1$. Setting $q_0(t) = q(t) - 2p_1(1) t$, we then perform a connected sum with the 
Legendrian submanifold $\Lambda_{\overline{q_0}}$ from Proposition~\ref{prop:embsurg}, in order to 
realize the bLCH Poincar\'e polynomial $q + p$ as desired.
\end{proof}


\begin{thebibliography}{99}
\bibitem{BC} Fr\'ed\'eric Bourgeois, Baptiste Chantraine, Bilinearized Legendrian contact homology and the augmentation category, 
{\em J. Symplectic Geom.} 12(3):553--583, 2014.

\bibitem{BST} Fr\'ed\'eric Bourgeois, Josh Sabloff, Lisa Traynor, Lagrangian cobordisms via generating families: 
constructions and geography, {\em Algebr. Geom. Topol.} 15(4):2439--2477, 2015.

\bibitem{C} Yuri Chekanov,  Differential algebra of Legendrian links, {\em Invent. Math.}, 150(3):441--483, 2002.

\bibitem{DR} Georgios Dimitroglou Rizell, Legendrian ambient surgery and Legendrian contact homology. 
{\em J. Symplectic Geom.} 14(3):811--901, 2016. 

\bibitem{E} Tobias Ekholm, Morse flow trees and Legendrian contact homology in $1$-jet spaces, {\em Geom. Topol.} 11:1083--1224, 2007.

\bibitem{EESdual} Tobias Ekholm, John  Etnyre, Joshua Sabloff,  A duality exact sequence for Legendrian contact homology,
{\em Duke Math. J.} 150(1):1--75, 2009.

\bibitem{EES1} Tobias Ekholm, John Etnyre, and Michael Sullivan, The contact homology of Legendrian submanifolds in
  {${\mathbb{R}}\sp {2n+1}$}, {\em J. Differential Geom.}, 71(2):177--305, 2005.

\bibitem{EES2} Tobias Ekholm, John Etnyre, and Michael Sullivan, Non-isotopic Legendrian submanifolds in $\mathbb{R}^{2n+1}$,
 {\em J. Differential Geom.}, 71(1):85--128, 2005.

\bibitem{EES3} Tobias Ekholm, John Etnyre, and Michael Sullivan, Legendrian contact homology in {$P\times\mathbb{R}$}, 
{\em Trans. Amer. Math. Soc.}, 359(7):3301--3335, 2007.

\bibitem{FHT} Yves F\'elix, Stephen Halperin, Jean-Claude Thomas, Rational homotopy theory, {\em Graduate Texts in 
Mathematics} 205, Springer-Verlag, New-York 2001.

\bibitem{HR1} M.~B.~Henry, D.~Rutherford, Equivalence classes of augmentations and Morse complex sequences of 
Legendrian knots, {\em Algebr. Geom. Topol.} {\bf 15} (2015), no. 6, 3323--3353.

\bibitem{HR2} M.~B.~Henry, D.~Rutherford, A combinatorial DGA for Legendrian knots from generating families, 
{\em Commun. Contemp. Math.} {\bf 15} (2013), no. 2, 1250059.

\bibitem{Kalman} T.~K\'alm\'an, Contact homology and one parameter families of Legendrian knots,
{\em Geom. Topol.} 9:2013--2078, 2005.

\bibitem{MS} Paul Melvin, Sumana Shrestha, The nonuniqueness of Chekanov polynomials of Legendrian knots,
{\em Geom. Topol.} 9:1221--1252, 2005.

\bibitem{N} Lenhard Ng, Computable Legendrian invariants, {\em Topology} 42(1):55--82, 2003.

\bibitem{NRSSZ} Lenhard Ng, Dan Rutherford, Vivek Shende, Steven Sivek, Eric Zaslow, Augmentations are Sheaves,
{\em preprint} arXiv:1502.04939, 2015.

\end{thebibliography}
 \end{document}